\newtheorem{theorem}{Theorem}
\theoremstyle{plain}
\newtheorem{corollary}{Corollary}
\newtheorem{definition}{Definition}
\newtheorem{proposition}{Proposition}
\newtheorem{remark}{Remark}
\newcommand{\ve}{\varepsilon}
\newcommand{\1}{\mathbf 1}
\definecolor{OliveGreen}{cmyk}{0.64, 0, 0.95, 0.40}
\numberwithin{equation}{section}
\newcommand{\M} {\mathbb M}
\newcommand{\bG} {\mathbb G}
\newcommand{\R} {\mathbb R}
\newcommand{\bM} {\mathbb M}
\begin{document}
\title[Volume Comparison]{Volume and distance comparison theorems for  sub-Riemannian manifolds}
\author{Fabrice Baudoin}
\address{Department of Mathematics\\Purdue University \\
West Lafayette, IN 47907} \email[Fabrice Baudoin]{fbaudoin@math.purdue.edu}
\thanks{First author supported in part by
NSF Grant DMS 0907326}
\author{Michel Bonnefont}
\address{Institut de Math\'ematiques de Bordeaux\\ Universit\'e de Bordeaux 1\\ Talence 33405, France } \email[Michel Bonnefont]{michel.bonnefont@math.u-bordeaux1.fr}
\author{Nicola Garofalo}
\address{Department of Mathematics\\Purdue University \\
West Lafayette, IN 47907} \email[Nicola
Garofalo]{garofalo@math.purdue.edu}
\thanks{Third author supported in part by NSF Grant DMS-0701001 and by NSF Grant DMS-1001317}
\author{Isidro H Munive}
\address{Mathematical Analysis, Modelling and Applications\\SISSA \\
Via Bonomea 265, Trieste, ITALY  34136} \email[Isidro H Munive]{imunive@sissa.it}
\thanks{Fourth author supported in part by the third author's NSF Grants DMS-0701001 and DMS-1001317}
\begin{abstract}
In this paper we study global  distance estimates and uniform local volume estimates in a large class of sub-Riemannian manifolds. Our main device is the generalized curvature dimension inequality introduced by the first and the third author in \cite{BG1} and its use to obtain sharp inequalities for solutions of the sub-Riemannian heat equation. As a consequence, we obtain a Gromov type precompactness theorem for the class of sub-Riemannian manifolds  whose generalized Ricci curvature is bounded from below in the sense of \cite{BG1}.
\end{abstract}
\maketitle

\tableofcontents

\section{Introduction}

The goal of the present work is to study volume and distance comparison estimates on sub-Riemannian manifolds that satisfy the generalized curvature dimension inequality introduced in \cite{BG1}.  We in particular prove a global doubling property in the possibly negative curvature case which complements  the volume estimates obtained in \cite{BBG}, where the curvature was always supposed to be non negative. The distance estimates we obtain, and the methods to prove them are new, but in the non negatively curved Sasakian case that was treated in \cite{BB-sasakian}. As a consequence of the global doubling property , we obtain a Gromov type precompactness theorem for the class of sub-Riemannian manifolds that satisfy the generalized curvature dimension inequality and, as a consequence of the distance comparison theorem, we obtain  Fefferman-Phong type  subelliptic estimates.

\

To put the results we obtain in perspective, let us point out that distance and volume estimates in sub-Riemannian geometry have been extensively studied in the literature. But most of the obtained results are of local nature. More precisely, let $(\M,g)$ be a smooth and connected Riemannian manifold.  Let us assume that there exists on $\M$ a family of vector fields $\{ X_1, \cdots X_d \}$ that satisfy the bracket generating condition. We are interested in the sub-Riemannian structure on $\M$ which is given by the vector fields  $\{ X_1, \cdots X_d \}$. In sub-Riemannian geometry the Riemannian distance $d_R$ of $\mathbb M$ is most of the times confined to the background (see in this
regard the discussion in section 0.1 of Gromov's \emph{Carnot-Carath\'eodory spaces seen from within} in \cite{Be}).
There is another distance on $\mathbb M$, that was introduced by
Carath\'eodory in his seminal paper \cite{Car}, which plays a
central role. A piecewise $C^1$ curve $\gamma:[0,T]\to \mathbb M$ is
called subunitary at $x$ if for every $\xi\in T^*_x\mathbb M$ one
has
\[
g(\gamma'(t),\xi)^2 \le
 \sum_{i=1}^d g(X_i(\gamma(t)),\xi)^2.
\]
We define the subunit length of $\gamma$ as $\ell_s(\gamma) = T$. If
we indicate with $S(x,y)$ the family of subunit curves such that
$\gamma(0) = x$ and $\gamma(T) = y$, then thanks to the fundamental
accessibility theorem of Chow-Rashevsky the connectedness of
$\mathbb M$ implies that $S(x,y) \not= \varnothing$ for every
$x,y\in \mathbb M$, see \cite{Ch}, \cite{Ra}. This allows to define
the sub-Riemannian distance on $\mathbb M$ as follows
\[
d(x,y) = \inf\{\ell_s(\gamma)\mid \gamma\in S(x,y)\}.
\]
We refer the reader to the cited contribution of Gromov to
\cite{Be}, and to the opening article by Bella\"iche in the same
volume. Another elementary consequence of the Chow-Rashevsky theorem is that the identity map $i:(\mathbb M,d) \hookrightarrow (\mathbb M,d_R)$ is continuous and thus, the topologies of $d_R$ and $d$ coincide. 
Several fundamental properties of the metric $d$ have been discussed in the seminal paper by Nagel, Stein and  Wainger \cite{NSW}. 
In particular, the following local distance comparison theorem was proved in \cite{NSW}.

\begin{theorem}[Nagel-Stein-Wainger, \cite{NSW}]\label{T:comparison} For any connected set $\Omega \subset \mathbb M$ which is bounded in the distance $d_R$ there exist $K= K(\Omega)>0$, and $\epsilon =
\epsilon(\Omega)>0$, such that
\[
d(x,y) \leq  C d_R(x,y)^\epsilon,\ \ \ x, y \in \Omega.
\]
\end{theorem}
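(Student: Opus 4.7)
The strategy is to localize the problem using compactness, exploit the bracket-generating hypothesis to produce a uniform \emph{step} $r$, and then build a ball-box type parametrization near each point via iterated commutator flows of the $X_i$.

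First I would reduce to a compact setting: since $\Omega$ is $d_R$-bounded, enlarge it to a relatively compact open set whose closure $K$ is compact in $(\mathbb M,d_R)$. At each $x\in K$ the bracket-generating condition gives an integer $r(x)$ so that iterated Lie brackets of $X_1,\dots,X_d$ of length at most $r(x)$ span $T_x\mathbb M$; upper semicontinuity of $r(\cdot)$ together with compactness yield a uniform $r=r(\Omega)$. A finite covering argument then reduces the theorem to proving the comparison on a neighborhood of each point $x_0\in K$, with constants depending only on $K$.

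The heart of the argument is the following ball-box style construction. For each multi-index $I=(i_1,\dots,i_k)$ introduce a commutator flow $C^I_s$ built by composing the flows $\phi^{X_{i_\ell}}_{\pm s}$ in a Baker--Campbell--Hausdorff pattern, so that
\[
C^I_s(x_0)=\exp\bigl(s^k\,Y_I(x_0)+O(s^{k+1})\bigr),\qquad Y_I=[X_{i_1},[\cdots,[X_{i_{k-1}},X_{i_k}]\cdots]],
\]
and so that $C^I_s$ traces a subunit curve of length $\leq c_k\,s$. Selecting at $x_0$ a basis $Y_{I_1},\dots,Y_{I_n}$ of $T_{x_0}\mathbb M$ with $k_j=|I_j|\leq r$, define
\[
\Phi(t_1,\dots,t_n)=C^{I_1}_{\sigma_1(t_1)}\circ\cdots\circ C^{I_n}_{\sigma_n(t_n)}(x_0),\qquad \sigma_j(t)=|t|^{1/k_j}\,\mathrm{sgn}(t).
\]
Since $\partial_{t_j}\Phi(0)=Y_{I_j}(x_0)$, the map $\Phi$ is a bi-Lipschitz homeomorphism from a Euclidean ball $B(0,\rho)$ onto a neighborhood of $x_0$ in $\mathbb M$, and by compactness both $\rho$ and the bi-Lipschitz constant can be chosen uniformly in $x_0\in K$.

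Given $y$ with $d_R(x_0,y)\leq\delta$ small, the preimage $(t_1,\dots,t_n)=\Phi^{-1}(y)$ then satisfies $|t_j|\leq C\delta$, and concatenating the subunit paths defining the factors of $\Phi$ yields a subunit curve from $x_0$ to $y$ of length at most $\sum_j c_{k_j}|t_j|^{1/k_j}\leq C'\,\delta^{1/r}$. Hence $d(x_0,y)\leq C''\delta^{1/r}$, and a chaining argument over a finite cover of $\Omega$ lifts this to the global statement with the same exponent $\epsilon=1/r$. The main obstacle is the uniform control required in the middle step: arranging the selection of commutator bases $(Y_{I_j})$ and the BCH remainder estimates to hold with constants independent of $x_0\in K$. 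This is exactly the content of the \emph{privileged coordinates} technique developed by Nagel, Stein and Wainger (and later systematized by Bella\"iche), which ensures that the Jacobian of $\Phi$ at $0$ is bounded below independently of the base point.
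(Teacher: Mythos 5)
This theorem is not proved in the paper: it is stated as a background result and cited directly from Nagel--Stein--Wainger \cite{NSW}, so there is no internal argument to compare your proposal against. That said, your sketch is essentially a condensed version of the actual NSW/Bella\"iche proof (commutator-flow approximation of brackets, privileged coordinates, ball--box inclusion), so the choice of method is correct.

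Two technical points in the sketch would need repair before it is a proof. First, the reparametrization $\sigma_j(t)=|t|^{1/k_j}\operatorname{sgn}(t)$ does not give $\sigma_j(t)^{k_j}=t$ when $k_j$ is even: one gets $|t|$, so the map $t_j\mapsto C^{I_j}_{\sigma_j(t_j)}(x_0)$ folds at $t_j=0$ and is not even injective, let alone differentiable, in that coordinate. The standard fix is to use a different commutator word (e.g.\ replacing one $X_{i_\ell}$ by $-X_{i_\ell}$) for $t_j<0$ so that the approximating flow points in the direction of $-Y_{I_j}$, or equivalently to work with the smooth exponential chart $\Phi(t)=\exp\bigl(\sum_j t_j Y_{I_j}\bigr)(x_0)$ and then translate back into subunit cost via the commutator flows, which is what NSW do. Second, $\Phi$ as you have written it is not bi-Lipschitz (the $\sigma_j$ are not Lipschitz near $0$ when $k_j>1$); what one actually establishes is an anisotropic ball--box inclusion $B_{\mathrm{eucl}}(x_0,c\,\delta^r)\subset\Phi\bigl(\{|t_j|\le\delta^{k_j}\}\bigr)$, from which the desired estimate $d(x_0,y)\le C\,d_R(x_0,y)^{1/r}$ follows as you indicate. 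With those two corrections and the uniformity in $x_0$ that you already flag (which in NSW is controlled through a lower bound on the Jacobian of the exponential chart, uniform over compacta), the argument goes through and recovers the cited result with $\epsilon=1/r$.
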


The following result also proved  in \cite{NSW} provides a uniform local control of the growth of the metric balls in $(\bM,d)$.

\begin{theorem}[Nagel-Stein-Wainger, \cite{NSW}] \label{T:doubling}
For any $x\in \bM$ there exist constants $C(x), R(x)>0$ such that
with $Q(x) = \log_2 C(x)$ one has
\[
\mu(B(x,tr)) \ge C(x)^{-1} t^{Q(x)} \mu(B(x,r)),\ \ \ 0\le t\le 1,\
0<r\le R(x).
\]
Given any compact set $K\subset \bM$ one has
\[
\underset{x\in K}{\inf}\ C(x) >0,\ \ \ \underset{x\in K}{\inf}\ R(x)
>0.
\]
\end{theorem}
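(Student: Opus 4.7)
The plan is to follow the classical ``ball-box'' strategy of Nagel, Stein, and Wainger. Since the family $\{X_1,\dots,X_d\}$ is bracket generating, at any fixed $x_0\in\bM$ there is an integer $s(x_0)\ge 1$ such that iterated commutators of the $X_i$ of length at most $s(x_0)$ span $T_{x_0}\bM$. From these commutators I would select an $n$-tuple $I=(Y_{j_1},\dots,Y_{j_n})$, with $n=\dim \bM$, forming a basis of $T_{x_0}\bM$, and attach to each $Y_{j_k}$ its commutator weight $d(j_k)\in\{1,\dots,s(x_0)\}$. This selection persists in a neighborhood of $x_0$, since the determinant $\lambda_I(x)=\det(Y_{j_1}(x),\dots,Y_{j_n}(x))$ (computed in a fixed chart) is continuous and non-vanishing near $x_0$.

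The heart of the matter is a ball-box comparison for the ``exponential box'' map
\[
\Phi_{x_0}(u_1,\dots,u_n)=\exp(u_n Y_{j_n})\circ\cdots\circ\exp(u_1 Y_{j_1})(x_0).
\]
One must show that there exist $c_1,c_2>0$ and $\rho_0=\rho_0(x_0)>0$ such that, for every $0<r\le\rho_0$,
\[
\Phi_{x_0}\Big(\prod_{k=1}^{n}(-c_1 r^{d(j_k)},c_1 r^{d(j_k)})\Big)\subseteq B(x_0,r)\subseteq \Phi_{x_0}\Big(\prod_{k=1}^{n}(-c_2 r^{d(j_k)},c_2 r^{d(j_k)})\Big).
\]
The left-hand inclusion would be proved by approximating the flow of each commutator $Y_{j_k}$ for time $t$ by a finite concatenation of flows of the original $X_i$'s of subunit length $O(t^{1/d(j_k)})$, a standard Baker--Campbell--Hausdorff estimate, and then summing contributions to conclude that every $\Phi_{x_0}(u)$ with $|u_k|\le c_1 r^{d(j_k)}$ is reached from $x_0$ by a subunit path of length at most $r$. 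The right-hand inclusion would be obtained by estimating the coordinate increments of an arbitrary subunit curve through the duality characterization of ``subunitary'' given in the excerpt.

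Granted the ball-box comparison, a change of variables and a summation over all admissible $n$-tuples yield
\[
\mu(B(x_0,r))\asymp\sum_{I}|\lambda_I(x_0)|\,r^{d(I)},\qquad d(I):=\sum_{k=1}^{n}d(j_k),
\]
with equivalence constants depending only on $c_1,c_2$ and $n$. Setting $Q(x_0):=\max\{d(I):\lambda_I(x_0)\ne 0\}$, the right-hand side is a polynomial in $r$ with non-negative coefficients and degree $Q(x_0)$, so the elementary inequality $(tr)^{d(I)}\ge t^{Q(x_0)}r^{d(I)}$ for $0<t\le 1$ gives
\[
\sum_I|\lambda_I(x_0)|\,(tr)^{d(I)}\ge t^{Q(x_0)}\sum_I|\lambda_I(x_0)|\,r^{d(I)}.
\]
Combined with the two-sided equivalence above, this produces the desired estimate $\mu(B(x_0,tr))\ge C(x_0)^{-1}t^{Q(x_0)}\mu(B(x_0,r))$, with $C(x_0)$ absorbing the equivalence constants.

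For the uniformity over a compact set $K\subset\bM$, I would observe that the coefficients $\lambda_I(x)$ are smooth in $x$, and by H\"ormander's condition at every point at least one $\lambda_I$ is non-zero; hence $K$ admits a finite cover by open sets on each of which some fixed selection $I$ remains non-degenerate. The constants $c_1,c_2,\rho_0$ and the resulting $C(x),R(x)$ all depend continuously on $x$ and on the chosen $I$, so taking finite minima over the subcover yields the uniform lower bounds for $R(x)$ and $C(x)^{-1}$. The main obstacle in this programme is the ball-box comparison itself: proving the two inclusions with the correct anisotropic exponents $r^{d(j_k)}$ is delicate and requires an induction on the commutator step together with quantitative BCH expansions, as carried out in detail in \cite{NSW}.
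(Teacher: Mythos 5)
Theorem \ref{T:doubling} is quoted in the paper as background from \cite{NSW}; the paper offers no proof of it, so there is no internal argument to compare yours against. Your sketch correctly reconstructs the Nagel--Stein--Wainger ball-box strategy, which is indeed the standard (and essentially only) route: assign commutator weights, set up the exponential box map $\Phi_{x_0}$, prove the two-sided ball-box inclusion, compare $\mu(B(x,r))$ with the polynomial $\Lambda(x,r)=\sum_I |\lambda_I(x)|\,r^{d(I)}$, and deduce the anisotropic doubling from the elementary observation that $t^{d(I)}\ge t^{Q(x)}$ for $0<t\le1$ whenever $d(I)\le Q(x)$.

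Two refinements should be made explicit. First, the equivalence $\mu(B(x_0,r))\asymp\Lambda(x_0,r)$ is not obtained by naively summing box volumes over all admissible $n$-tuples: in \cite{NSW} the ball-box comparison is established for one carefully selected tuple $I=I(x_0,r)$ that (nearly) maximizes $|\lambda_I(x_0)|\,r^{d(I)}$, and the passage from that single term to the full polynomial is a separate (though easy, since there are only finitely many $I$) step. All of the genuine difficulty---the lifting to a free nilpotent group, the Rothschild--Stein approximation, and the quantitative Campbell--Hausdorff estimates---is concentrated in proving $B(x_0,r)\subseteq\Phi_{x_0}(\text{box})$ for that optimal tuple, and you are right to flag this inclusion as the crux. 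Second, your argument literally yields $\mu(B(x,tr))\ge C_0(x)^{-1}t^{Q_0(x)}\mu(B(x,r))$ with $Q_0(x)=\max\{d(I):\lambda_I(x)\ne0\}$ and a constant $C_0(x)$ coming from the equivalence constants; there is no reason a priori that $Q_0=\log_2 C_0$. The form stated in the theorem, in which $Q(x)=\log_2 C(x)$ by definition, is the iterated dyadic version of the basic doubling inequality $\mu(B(x,r))\le C_1(x)\,\mu(B(x,r/2))$ with $C_1=C_0\,2^{Q_0}$; you should record that cosmetic but necessary repackaging, since as written your conclusion does not match the theorem's normalization. The uniformity on compacts then follows, as you say, from a finite cover on which a fixed tuple $I$ stays non-degenerate and the constants depend continuously on $x$.
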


These theorems and the methods used  to prove them are local in nature. The goal of the present paper is to obtain global analogues for a large class of sub-Riemannian manifolds. 

\

To fix the ideas, let us present our main results in the context of Sasakian manifolds but we stress that the class of sub-Riemannian structures to which our results apply is much larger than the class of Sasakian manifolds.

Let $\mathbb{M}$ be a complete strictly pseudo convex  CR  manifold with real dimension $2n +1$. Let $\theta$ be a pseudo-Hermitian form on $\mathbb{M}$ with respect to which the Levi form $L_\theta$ is positive definite and thus defines a Riemannian metric $g$ on $\M$ (the Webster metric). The kernel of $\theta$ defines \textbf{a horizontal}  bundle $\mathcal H$. The triple $(\M, \mathcal H,g)$ is a sub-Riemannian manifold. The CR structure on $\M$ is said to be Sasakian if the Reeb vector field of $\theta$ is a sub-Riemannian Killing vector field. Let us denote by $\mathbf{Ric}_\nabla$ the Ricci curvature tensor of the Tanaka-Webster connection on $\M$. In this paper, we prove the following global version of Theorems \ref{T:comparison} and \ref{T:doubling}
\begin{theorem}
Let $\M$ be a complete Sasakian manifold. Let us assume that  there exists $K \in \mathbb{R}$, such that for every $V \in \mathcal{H}$, $$\mathbf{Ric}_\nabla (V,V) \ge -K \| V \|^2,$$ then:
\begin{enumerate}
\item (Distance comparison theorem) There exists a constant $C=C(n,K)>0$, such that for every $x,y \in \M$,
\[
d(x,y)\le C \max \{ d_R (x,y), \sqrt{d_R (x,y)} \}.
\]
\item (Uniform local volume doubling property) For every $R>0$, there exists a  constant $C=C(R,n,K)>0$ such that for any $x\in \bM$, 
with $Q= \log_2 C$ one has
\[
\mu(B(x,tr)) \ge C^{-1} t^{Q} \mu(B(x,r)),\ \ \ 0\le t\le 1,\
0<r\le R.
\]
\end{enumerate} 
\end{theorem}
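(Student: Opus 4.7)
The plan is to reduce both assertions to the analytic machinery built around the generalized curvature-dimension inequality of \cite{BG1} applied to the sub-Laplacian $L$ and its heat semigroup $P_t = e^{tL}$. In the Sasakian setting, a Ricci lower bound $\mathbf{Ric}_\nabla \ge -K$ on the horizontal distribution $\mathcal{H}$ should translate into a $CD(\rho_1,\rho_2,\kappa,d)$ inequality whose constants depend only on $n$ and $K$; the first preparatory step is therefore to verify this translation by computing the relevant Tanaka-Webster torsion terms, in the spirit of \cite{BG1, BB-sasakian}.

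For part (2), I would then run the Bakry--\'Emery program: from the $CD$ inequality one derives a reverse Poincar\'e inequality and a Wang-type Harnack inequality for $P_t$, and from these a Gaussian upper bound
\[ p_t(x,y) \le \frac{C_1}{\mu(B(x,\sqrt{t}))} \exp\left(-\frac{d(x,y)^2}{C_2 t}\right), \qquad 0<t\le R^2, \]
together with the matching on-diagonal lower bound $p_t(x,x) \ge c/\mu(B(x,\sqrt{t}))$. The classical argument of Grigoryan--Saloff-Coste then yields the uniform local doubling property on balls of radius up to $R$, with constant $C=C(n,K,R)$, which is exactly the statement in (2).

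For part (1), the comparison $d\le C\max\{d_R,\sqrt{d_R}\}$ splits naturally into a small-scale regime ($d_R(x,y)\le 1$) and a large-scale regime ($d_R(x,y)\ge 1$). At small scales, I plan to use Varadhan's formula $d(x,y)^2 = -4\lim_{t\to 0} t\log p_t(x,y)$ and compare with the Riemannian heat kernel $p_t^R$ via the Sasakian structure: integrating along the Reeb flow, which is a sub-Riemannian isometry, should yield $p_t(x,y) \ge c_1 t^{-1/2} p_{c_2 t}^R(x,y)$ for small $t$, from which $d(x,y)^2 \lesssim d_R(x,y)$. At large scales, a covering argument that concatenates local comparisons on pieces of Riemannian length $\sim 1$ shows that a Riemannian geodesic of length $r\ge 1$ can be tracked by a horizontal curve of length $\lesssim r$; uniformity across $\M$ is guaranteed by the uniform $CD$ parameters.

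The main obstacle I foresee lies in the distance comparison: producing a quantitative lower bound for $p_t(x,y)$ that simultaneously reflects the Riemannian distance and the bracket-generating cost. The Wang--Harnack inequality delivered by the $CD$ inequality controls $p_t$ at different \emph{sub-Riemannian} distances and must be coupled with a Riemannian-versus-sub-Riemannian energy comparison to extract the $\sqrt{d_R}$ behaviour. In the Sasakian case this coupling should be tractable thanks to the one-dimensional vertical distribution spanned by the Reeb vector field, but verifying that the resulting constants depend only on $n$ and $K$, rather than on global geometric quantities, is the delicate point.
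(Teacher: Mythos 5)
Your reduction to the generalized curvature-dimension inequality is exactly the paper's starting point: Theorem~\ref{T:sasakiani} from \cite{BG1} shows that the horizontal Ricci lower bound on a complete Sasakian manifold yields $CD(\rho_1,\frac{d}{4},1,d)$ with $d=2n$ and $\rho_1 = -K$, together with Hypotheses (H.1)--(H.3). From that point on, however, the strategies diverge.

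For the distance comparison, your proposal leans on Varadhan's small-time asymptotics plus a heat-kernel comparison obtained by averaging along the Reeb flow. The paper does something quite different, and the difference is the central idea of the article: it introduces the one-parameter family of distances $d_\tau$ defined in \eqref{distance-riem} via curves subunit for $\Gamma + \tau^2\Gamma^Z$, which interpolate between the sub-Riemannian distance $d = d_0$ and a genuine Riemannian distance for $\tau>0$. The Li-Yau inequality \eqref{Li-Yau} coming from the $CD$ condition controls not only $\Gamma(\ln P_t f)$ but \emph{also} $\Gamma^Z(\ln P_t f)$, and the key observation \eqref{Li-Yau-Riem} converts this into a Harnack inequality (Theorem~\ref{T:harnack}) expressed in terms of $d_\tau$. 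Combining the heat-kernel \emph{lower} bound in $d_\tau$ (Theorem~\ref{th-low-hk}) with the \emph{upper} Gaussian bound in the sub-Riemannian distance $d$ (Theorem~\ref{Upper-Bd}) and optimizing the free time parameter $t$ produces the comparison $d \le C\max\{\sqrt{d_\tau},d_\tau\}$ of Theorem~\ref{th-distance}, which is exactly statement~(1). This argument never invokes the Reeb flow explicitly and works in the full abstract $CD$ framework, not just for Sasakian manifolds.

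Where your proposal has a genuine gap is precisely the step you flag as the ``main obstacle'': the claimed lower bound $p_t(x,y)\ge c_1 t^{-1/2}p^R_{c_2t}(x,y)$ from Reeb-flow averaging is asserted but not derived, and it is the whole content of statement~(1). Varadhan's asymptotics alone only give pointwise information as $t\to 0$ at fixed $(x,y)$; to extract $d\le C\sqrt{d_R}$ one needs a \emph{uniform, quantitative} two-sided Gaussian estimate on $p_t$ valid on a fixed time range, with constants depending only on $n,K$, and the mechanism by which the $CD$ inequality delivers that estimate in a form that sees the Riemannian distance is exactly the $d_\tau$ Harnack inequality you have not produced. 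The large-scale concatenation you sketch is fine once the small-scale bound is in hand, so the small-scale bound is the real issue.

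For the doubling property, your outline is closer in spirit to the paper's, but not identical: the paper does not use a Wang-type (dimension-free) Harnack inequality but the Li-Yau parabolic Harnack inequality, and the on-diagonal lower bound $p(x,x,t)\gtrsim \mu(B(x,\sqrt t))^{-1}$ is obtained not by a standard elliptic argument but by a chain consisting of a reverse log-Sobolev inequality (Theorem~\ref{Rev-Log}), a differential inequality for $\sqrt{-\ln P_t f}$, a small-time asymptotic for $P_s\mathbf{1}_{B(x,r)^c}$, and a resulting uniform lower bound on heat content $P_{A(r)r^2}\mathbf{1}_{B(x,r)}(x)\ge 1/2$ (Theorem~\ref{t:AR}). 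Coupled with the heat-kernel volume upper bound of Proposition~\ref{RhoNeg1}, this gives the global doubling estimate \eqref{e:doubling}, of which statement~(2) is the localized consequence. Your ``Grigoryan--Saloff-Coste'' shortcut would require a Gaussian upper bound as input, which in the paper is itself a nontrivial consequence of the Harnack inequalities; so while the endgame is recognizable, the intermediate machinery you invoke is not the one actually used and would need to be developed.
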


The dependency of the constant $C$ on $R$ in the volume estimate is described more precisely in Theorem \ref{th-doubling}.

The method we use to approach these types of results are heat equation techniques and sharp Gaussian bounds for the heat kernel relying on the methods developed in \cite{BBG} and \cite{BG1}.
In particular, we find it convenient to work in the context of a local Dirichlet space associated to a subelliptic diffusion  operator.
This \textit{abstract} presentation has the advantage to encompass in the same framework many relevant  examples of different nature. 

\

The paper is organized as follows. In Section 2, we introduce the framework  of \cite{BG1} and recall  the generalized curvature dimension inequality that is going to be our main device in this paper. 
In Section 3, we study sharp Harnack inequalities for solutions of the sub-Riemannian heat equations. The main novelty here with respect to \cite{BG1} and \cite{BBG} is that these Harnack inequalities involve a family of distances that interpolate between the sub-Riemannian distance and the Riemannian one. Section 4 is devoted to the proof of the uniform volume doubling property. We skip most of the details in some of the proofs since the methods are close to the methods of \cite{BBG}. However, due to the more general setting, several computations are more involved.  In Section 5, we establish through sharp upper Gaussian bounds for the heat kernel the distance comparison theorem. Section 6 shows how  the distance comparison theorem is used to prove  subelliptic estimates. In particular, the fact that the sub-Riemannian distance behaves as $\sqrt{d_R (x,y)}$ for close $x,y$ implies that the operator $L$ is subelliptic of order $1/2$. Finally Section 7 establishes a sub-Riemannian Gromov  type precompactness  theorem which is obtained as a consequence of our volume estimates.

\

\textbf{Acknowledgements:} The authors would like to thank an anonymous referee for her/his very careful reading and  constructive remarks.

\section{The sub-Riemannian curvature dimension inequality and main results}

We consider a measure metric space $(\M,d,\mu)$, where $\M$ is $C^\infty$ connected manifold endowed with a  $C^\infty$ measure $\mu$, and $d$ is a metric canonically associated with a $C^\infty$ second-order diffusion operator $L$ on $\M$ with real coefficients. We assume that $L$ is locally subelliptic on $\M$ in the sense of \cite{FP}, and that moreover: 
\begin{itemize}
\item[(i)] $L1=0$; 
\item[(ii)]
$\int_\bM f L g d\mu=\int_\bM g Lf d\mu$;
\item[(iii)] $\int_\bM f L f d\mu \le 0$,
\end{itemize}
for every $f , g \in C^ \infty_0(\bM)$. The quadratic functional $\Gamma(f) = \Gamma(f,f)$, where
\begin{equation}\label{gamma}
\Gamma(f,g) =\frac{1}{2}(L(fg)-fLg-gLf), \quad f,g \in C^\infty(\bM),
\end{equation}
is known as \textit{le carr\'e du champ}. Notice that $\Gamma(f) \ge 0$ and that $\Gamma(1) = 0$.

An absolutely continuous curve $\gamma: [0,T] \rightarrow \bM$ is said to be subunit for the operator $\Gamma$ if for every smooth function $f : \bM \to \mathbb{R}$ we have $ \left| \frac{d}{dt} f ( \gamma(t) ) \right| \le \sqrt{ (\Gamma f) (\gamma(t)) }$.  We then define the subunit length of $\gamma$ as $\ell_s(\gamma) = T$. Given $x, y\in \M$, we indicate with 
\[
S(x,y) =\{\gamma:[0,T]\to \M\mid \gamma\ \text{is subunit for}\ \Gamma, \gamma(0) = x,\ \gamma(T) = y\}.
\]
In this paper we assume that 
\[
S(x,y) \not= \varnothing,\ \ \ \ \text{for every}\ x, y\in \M.
\]
Under such assumption  it is easy to verify that
\begin{equation}\label{ds}
d(x,y) = \inf\{\ell_s(\gamma)\mid \gamma\in S(x,y)\},
\end{equation}
defines a true distance on $\M$. Furthermore, it is known that
\begin{equation}\label{di}
d(x,y)=\sup \left\{ |f(x) -f(y) | \mid f \in  C^\infty(\bM) , \| \Gamma(f) \|_\infty \le 1 \right\},\ \ \  \ x,y \in \bM.
\end{equation}
Throughout this paper we assume that the metric space $(\M,d)$ is complete.

\

In addition to $\Gamma$, we assume that there exists another first-order bilinear form $\Gamma^{Z}$ satisfying for $f,g,h\in C^{\infty}\left(\mathbb{M}\right)$:
\begin{itemize}
\item[1)] $\Gamma^{Z}\left(fg,h\right)=f\Gamma^{Z}\left(g,h\right)+g\Gamma^{Z}\left(f,h\right)$;
\item[2)] $\Gamma^Z(f) = \Gamma^{Z}\left(f,f\right)\geq 0$.
\end{itemize}


We introduce the following second-order differential forms: 
\begin{displaymath}
\Gamma_{2}\left(f,g\right)=\frac{1}{2}\left[L\Gamma\left(f,g\right)-\Gamma\left(f,Lg\right)-\Gamma\left(g,Lf\right)\right],
\end{displaymath}
\begin{displaymath}
\Gamma^{Z}_{2}\left(f,g\right)=\frac{1}{2}\left[L\Gamma^{Z}\left(f,g\right)-\Gamma^{Z}\left(f,Lg\right)-\Gamma^{Z}\left(g,Lf\right)\right],
\end{displaymath}
and we let $\Gamma_2(f) = \Gamma_2(f,f)$, $\Gamma_2^Z(f) = \Gamma^Z_2(f,f)$. \\

We also introduce a family  of control distances $d_\tau$ for $\tau \ge 0$. Given $x, y\in \M$, let us consider 
\[
S_\tau(x,y) =\{\gamma:[0,T]\to \M\mid \gamma\ \text{is subunit for}\ \Gamma+\tau^2 \Gamma^Z, \gamma(0) = x,\ \gamma(T) = y\}.
\]
A curve which is subunit for $\Gamma$ is obviously subunit for $ \Gamma+\tau^2 \Gamma^Z$, therefore $S_\tau(x,y) \neq \emptyset$. We can then define
\begin{equation}
\label{distance-riem}
d_\tau(x,y) = \inf\{\ell_s(\gamma)\mid \gamma\in S_\tau(x,y)\}.
\end{equation} 
Note that $d(x,y)= d_0(x,y)$ and that, clearly: $d_\tau(x,y)\leq d(x,y)$.


\noindent The following definition was introduced in \cite{BG1}.

\begin{definition}\label{D:cdi}
We shall say that  $\mathbb{M}$ satisfies the generalized curvature-dimension inequality CD($\rho_{1},\rho_{2},\kappa,d$) if there exist constants $\rho_{1}\in\mathbb{R}, \rho_{2}>0, \kappa\geq 0,$ and $d > 0$ such that the inequality
\begin{equation}
\label{sRCD}
\Gamma_{2}\left(f\right)+\nu\Gamma^{Z}_{2}\left(f\right)\geq \frac{1}{d}\left(Lf\right)^{2}+\left(\rho_{1}-\frac{\kappa}{\nu}\right)\Gamma\left(f\right)+\rho_{2}\Gamma^{Z}\left(f\right)
\end{equation}
holds for every $f\in C^{\infty}\left(\mathbb{M}\right)$ and every $\nu>0$.
\end{definition}

$\mathbb d$

Let us observe right-away that if $\rho_1'\ge \rho_1$, then $CD(\rho_1',\rho_2,\kappa,d) \Longrightarrow$\ CD$(\rho_1,\rho_2,\kappa,d)$. To provide the reader with some perspective on Definition \ref{D:cdi} we refer the reader to \cite{BG1} but point out that it constitutes a generalization of the so-called \emph{curvature-dimension inequality} CD$(\rho_1,n)$ from Riemannian geometry. We recall that the latter is said to hold on a $n$-dimensional Riemannian manifold $\M$ with Laplacian $\Delta$ if there exists $\rho_1\in \R$ such that for every $f\in C^\infty(\M)$ one has
\begin{equation}\label{cdr}
\Gamma_2(f) \ge \frac 1n (\Delta f)^2 + \rho_1 |\nabla f|^2,
\end{equation}
where 
\[
\Gamma_2(f) =  \frac 12 \big(\Delta |\nabla f|^2 - 2 <\nabla f,\nabla(\Delta f)>\big).
\]
To see that \eqref{sRCD} contains \eqref{cdr} it is enough to take $L = \Delta$, $\Gamma^Z = 0$, $\kappa = 0$, and $d = n$, and notice that \eqref{gamma} gives $\Gamma(f) = |\nabla f|^2$ (also note that in this context the distance \eqref{di} is simply the Riemannian distance on $\M$). 
It is worth emphasizing at this moment that, remarkably, on a complete Riemannian manifold the inequality \eqref{cdr} is equivalent to the lower bound Ric $\ge \rho_1$. 

The essential new aspect of the generalized curvature-dimension inequality CD$(\rho_1,\rho_2,\kappa,d)$ with respect to the Riemannian inequality CD$(\rho_1,n)$ in \eqref{cdr} is the presence of the a priori non-intrinsic forms $\Gamma^Z$ and $\Gamma^Z_2$. In the non-Riemannian framework of this paper the form $\Gamma$ plays the role of the square of the length of a gradient along the (horizontal) directions canonically associated with the operator $L$, whereas the form $\Gamma^Z$ should be thought of as the square of the length of a gradient in the missing (vertical) directions.  

In Definition \ref{D:cdi} the parameter $\rho_1$ plays a special role. For the results in this paper such parameter represents the lower bound on a sub-Riemannian generalization of the Ricci tensor. 
The case when $\rho_1 \ge 0$ is, in our framework, the counterpart of  the Riemannian Ric\ $\ge 0$. For this reason, when in this paper we say that $\M$ satisfies the curvature dimension inequality CD$(\rho_1,\rho_2,\kappa,d)$ with $\rho_1\ge 0$, we will routinely avoid repeating at each occurrence the sentence ``for some $\rho_2>0$, $\kappa\ge 0$ and $d>0$''. 


Before stating our main result we need to introduce  further technical assumptions on the forms $\Gamma$ and $\Gamma^Z$:
\begin{itemize}
\item[(H.1)] There exists an increasing
sequence $h_k\in C^\infty_0(\bM)$   such that $h_k\nearrow 1$ on
$\bM$, and \[
||\Gamma (h_k)||_{\infty} +||\Gamma^Z (h_k)||_{\infty}  \to 0,\ \ \text{as} \ k\to \infty.
\]
\item[(H.2)]  
For any $f \in C^\infty(\bM)$ one has
\[
\Gamma(f, \Gamma^Z(f))=\Gamma^Z( f, \Gamma(f)).
\]
\item[(H.3)]  The heat semigroup generated by $L$, which will  be  denoted by $P_t$ throughout the paper, is stochastically complete that is, for $t \ge 0$, $P_t 1=1$, and   for every $f \in C_0^\infty(\bM)$ and $T \ge 0$, one has 
\[
\sup_{t \in [0,T]} \| \Gamma(P_t f)  \|_{ \infty}+\| \Gamma^Z(P_t f) \|_{ \infty} < +\infty.
\]

\end{itemize}

The hypothesis (H.1) and (H.2) will be in force throughout the paper. 
Let us notice explicitly that when $\bM$ is a complete Riemannian manifold with $L = \Delta$, then (H.1)  and (H.2) are fulfilled. In fact, (H.2) is trivially satisfied since we can take $\Gamma^Z \equiv 0$, whereas (H.1) follows from (and it is in fact equivalent to) the completeness of $(\M,d)$. Actually, more generally, in the geometric examples encompassed by the framework of this paper (as we have said before, for a detailed discussion of these examples the reader should consult the preceding paper \cite{BG1}), (H.1) is equivalent to assuming that $(\M,d)$ be a complete metric space. The reason is that in those examples $\Gamma+\Gamma^Z$ is the \textit{carr\'e du champ} of the Laplace-Beltrami of a Riemannian structure whose completeness is equivalent to the completeness of  $(\M,d)$. The hypothesis (H.3) has been shown in \cite{BG1}  to be a consequence of the curvature-dimension inequality  CD$(\rho_1,\rho_2,\kappa,d)$ only in many examples. 
We mention the following results from \cite{BG1}.  

\begin{theorem}\label{T:sasakiani}
Let $(\bM,\theta)$ be a complete \emph{CR} manifold  with real dimension $2n+1$ and vanishing Tanaka-Webster torsion, i.e., a Sasakian manifold. If 
for every $x\in \bM$ the Tanaka-Webster Ricci tensor satisfies the bound  
\[
\emph{Ric}_x(v,v)\ \ge \rho_1|v|^2,
\]
for every horizontal vector $v\in \mathcal H_x$,
then, for the \emph{CR} sub-Laplacian of $\bM$  the curvature-dimension inequality \emph{CD}$(\rho_1,\frac{d}{4},1,d)$ holds, with $d = 2n$. Furthermore, the Hypothesis  (H.1), (H.2) and (H.3) are satisfied.
\end{theorem}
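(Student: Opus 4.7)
The plan is to choose the carré du champ operators $\Gamma(f)=\|\nabla^{\mathcal H}f\|^2$ (squared length of the horizontal gradient with respect to the Levi form) and $\Gamma^Z(f)=(Tf)^2$, where $T$ is the Reeb vector field of $\theta$. With these choices, (H.1) is verified by producing cutoffs whose Webster-Riemannian gradient decays to zero: since the Webster metric is complete by assumption, standard Riemannian techniques supply such $h_k$, and one notes that $\Gamma(h_k)+\Gamma^Z(h_k)$ coincides with the squared Webster gradient of $h_k$. Hypothesis (H.2) is essentially a translation of the defining Sasakian condition that $T$ is a sub-Riemannian Killing vector field: the Reeb flow preserves both $\mathcal H$ and the Levi form, which gives the commutation $T\Gamma(f)=2\Gamma(f,Tf)$ and (after applying it once more) the desired identity $\Gamma(f,\Gamma^Z(f))=\Gamma^Z(f,\Gamma(f))$.

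The heart of the argument is a Bochner-type identity for the Tanaka-Webster connection $\nabla^{TW}$. On a Sasakian manifold, where the pseudohermitian torsion vanishes, a direct computation yields
\[
\Gamma_2(f)=\|\nabla^{TW,2}_{\mathcal H}f\|_{\mathrm{HS}}^2+\mathrm{Ric}^{TW}(\nabla^{\mathcal H}f,\nabla^{\mathcal H}f)-2\langle J\nabla^{\mathcal H}f,\nabla^{\mathcal H}Tf\rangle,
\]
while the Sasakian commutation $[L,T]=0$ gives the much simpler $\Gamma^Z_2(f)=\Gamma(Tf)=\|\nabla^{\mathcal H}Tf\|^2$. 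I would then split the horizontal Hessian into symmetric and antisymmetric parts. Cauchy-Schwarz applied to the trace of the symmetric part gives $\|(\nabla^{TW,2}_{\mathcal H}f)_{\mathrm{sym}}\|_{\mathrm{HS}}^2\ge\frac1d(Lf)^2$, and the Sasakian bracket relations $[X_i,X_j]f$ proportional to $J_{ij}Tf$ force the antisymmetric part to be a specific multiple of $J\cdot Tf$; evaluating its Frobenius norm produces exactly the coefficient $\tfrac{d}{4}(Tf)^2=\rho_2\Gamma^Z(f)$ after the normalization of $J$ in the Levi form is tracked. Finally the cross term is absorbed by AM-GM,
\[
-2\langle J\nabla^{\mathcal H}f,\nabla^{\mathcal H}Tf\rangle\ge-\nu\|\nabla^{\mathcal H}Tf\|^2-\nu^{-1}\|J\nabla^{\mathcal H}f\|^2=-\nu\Gamma^Z_2(f)-\nu^{-1}\Gamma(f),
\]
so the $-\nu\Gamma^Z_2(f)$ is exactly cancelled by the $+\nu\Gamma^Z_2(f)$ on the left-hand side of the CD inequality, and what survives is the desired $(\rho_1-\kappa/\nu)\Gamma(f)$ with $\kappa=1$. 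Combining with the hypothesized Ricci lower bound yields CD$(\rho_1,d/4,1,d)$.

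Hypothesis (H.3) is the most delicate step and is not automatic in the sub-Riemannian setting. The plan is to bootstrap: local subellipticity of $L$ together with parabolic regularity makes $P_tf$ smooth, and one then differentiates the functional $\Phi(s)=P_s(\Gamma(P_{t-s}f)+a(s)\Gamma^Z(P_{t-s}f))$ along the semigroup for a suitable interpolation function $a(s)$; the CD inequality produces a differential inequality for $\Phi$ that bounds it uniformly on $[0,T]$, yielding the required gradient bounds. Stochastic completeness $P_t1=1$ then follows from these bounds together with (H.1) via a standard Khas'minskii-type cutoff argument. The main obstacles I anticipate are, first, the careful tracking of constants in the Bochner identity, in particular confirming that the normalization of $J$ and the $\tfrac12$ in the symmetric/antisymmetric decomposition conspire to produce exactly $d/4$ and not a factor off; and second, rigorously justifying the differentiation of $\Phi$ for merely smooth bounded $f$ in a setting where exponential volume growth is not excluded, which typically requires mild regularization of $f$ combined with the cutoff sequence supplied by (H.1).
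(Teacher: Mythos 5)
The paper does not prove Theorem~\ref{T:sasakiani}: it is stated explicitly as a result imported from \cite{BG1} (``We mention the following results from \cite{BG1}''), so there is no internal proof against which to compare your attempt. Assessed on its own, your outline follows the same Bochner-formula strategy used in \cite{BG1}, and the constants you advertise check out. A direct computation on the Heisenberg group (where $\mathrm{Ric}=0$, $d=2$, $[X,Y]=T$) gives
$\Gamma_2(f)=(XXf)^2+(YXf)^2+(XYf)^2+(YYf)^2-2\langle J\nabla_{\mathcal H}f,\nabla_{\mathcal H}Tf\rangle$,
with the Hessian block bounded below by $\tfrac12(Lf)^2+\tfrac12(Tf)^2$, which is exactly $\tfrac1d(Lf)^2+\tfrac d4(Tf)^2$; the cross-term coefficient $2$ then produces $\kappa=1$ after Young's inequality, confirming $\mathrm{CD}(\rho_1,\tfrac d4,1,d)$.

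Two places in your sketch deserve to be made rigorous rather than left heuristic. First, both the identity $\Gamma^Z_2(f)=\Gamma(Tf)$ that you invoke before applying AM--GM and the hypothesis (H.2) rest on specific commutation facts: $[L,T]=0$ (so that $\Gamma^Z_2(f)=(Tf)[L,T]f+\Gamma(Tf)=\Gamma(Tf)$), and the Killing identity $T\Gamma(f,g)=\Gamma(Tf,g)+\Gamma(f,Tg)$ (so that $\Gamma^Z\bigl(f,\Gamma(f)\bigr)=(Tf)\,T\Gamma(f)=2(Tf)\,\Gamma(f,Tf)=\Gamma\bigl(f,(Tf)^2\bigr)=\Gamma\bigl(f,\Gamma^Z(f)\bigr)$). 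These are the concrete content of ``$T$ is a sub-Riemannian Killing field'' and should be stated and used as such. Second, your plan for (H.3) is directionally right --- the interpolation functional $\Phi(s)=P_s\bigl(\Gamma(P_{t-s}f)+a(s)\Gamma^Z(P_{t-s}f)\bigr)$ is indeed the object used in \cite{BG1} --- but the order of the argument matters: one first proves a \emph{localized} gradient bound by inserting the cutoffs $h_k$ of (H.1) inside the integral so that the differentiation of $\Phi$ is licit, then passes to the limit in $k$, and only afterward deduces $P_t1=1$ from the resulting pointwise gradient estimates via the Khas'minskii-type argument. Without the localization, the obstruction you yourself flag (possible exponential volume growth) makes the naive differentiation of $\Phi$ unjustified, so this step cannot be waved off.
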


\begin{theorem}\label{T:carnotCD}
Let $\bG$ be a Carnot group of step two, with $d$ being the dimension of the horizontal layer of its Lie algebra.
Then, $\bG$ satisfies the generalized curvature-dimension inequality \emph{CD}$(0,\rho_2, \kappa, d)$ (with respect to any sub-Laplacian $L$ on $\bG$),  
where $\rho_2$  and $\kappa$ are appropriately (and explicitly) determined in terms of the group constants. Moreover, the Hypothesis  (H.1), (H.2) and (H.3) are satisfied. 
\end{theorem}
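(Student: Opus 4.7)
Let $X_1,\ldots,X_d$ be an orthonormal basis of the horizontal layer $\mathfrak g_1$ for the inner product defining the sub-Laplacian $L=\sum_iX_i^2$, and let $Z_1,\ldots,Z_m$ be a basis of the center $\mathfrak g_2=[\mathfrak g_1,\mathfrak g_1]$, with structure constants $[X_i,X_j]=\sum_k c_{ij}^kZ_k$. The natural choice is $\Gamma(f)=\sum_i(X_if)^2$ and $\Gamma^Z(f)=\sum_k(Z_kf)^2$, and the plan is to prove \eqref{sRCD} by a Bochner-type computation using two features of the step-$2$ hypothesis: first, $[X_i,Z_k]=0$, so each $Z_k$ commutes with $L$; second, $[X_j,[X_j,X_i]]=0$, so $[X_j^2,X_i]=2[X_j,X_i]X_j$.

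The main computation is to expand
$$\Gamma_2(f)=\sum_{i,j}(X_jX_if)^2+2\sum_{i,j,k}c_{ji}^k(X_if)(Z_kX_jf),\qquad \Gamma_2^Z(f)=\sum_{i,k}(Z_kX_if)^2,$$
the second equality following from $[L,Z_k]=0$. Next I split the Hessian as $X_jX_i=S_{ij}+\tfrac12[X_j,X_i]$; the symmetric/antisymmetric cross term vanishes by $(i,j)$-parity, so
$$\sum_{i,j}(X_jX_if)^2=\sum_{i,j}(S_{ij}f)^2+\tfrac12\sum_{i<j}\bigl(\textstyle\sum_k c_{ij}^kZ_kf\bigr)^{2}.$$
Applying Cauchy--Schwarz to the diagonal $(i=j)$ of the first sum yields the lower bound $(Lf)^2/d$, while the second sum is a quadratic form in the vertical derivatives $Z_kf$ with matrix $\sum_{i<j}\mathbf c_{ij}\mathbf c_{ij}^T$, whose smallest eigenvalue $\rho_2^*$ is strictly positive. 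This last positivity is the only delicate point: the stratification axiom $[\mathfrak g_1,\mathfrak g_1]=\mathfrak g_2$ makes the bracket map $\Lambda^2\mathfrak g_1\to\mathfrak g_2$ surjective, which forces its transpose to have trivial kernel.

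The remaining cross term $2\sum_{i,j,k}c_{ji}^k(X_if)(Z_kX_jf)$ is then controlled by Young's inequality with precisely the parameter $\nu$ from \eqref{sRCD}: for each $(j,k)$,
$$2\bigl|(Z_kX_jf)\textstyle\sum_i c_{ji}^kX_if\bigr|\le\nu(Z_kX_jf)^2+\nu^{-1}\bigl(\sum_ic_{ji}^kX_if\bigr)^{2},$$
and summing in $(j,k)$ produces a loss of $\nu\Gamma_2^Z(f)+\kappa\nu^{-1}\Gamma(f)$, where $\kappa$ is an explicit quadratic expression in the $c_{ij}^k$. The $\nu\Gamma_2^Z$ piece is exactly cancelled by the free $\nu\Gamma_2^Z(f)$ on the left of \eqref{sRCD}, and assembling the bounds gives $\mathrm{CD}(0,\rho_2^*/2,\kappa,d)$.

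Finally, (H.2) is immediate because $[X_i,Z_k]=0$ gives $X_iZ_kf=Z_kX_if$, so both $\Gamma(f,\Gamma^Z(f))$ and $\Gamma^Z(f,\Gamma(f))$ reduce to $2\sum_{i,k}(X_if)(Z_kf)(Z_kX_if)$. For (H.1), I cut off a Carnot homogeneous gauge $|\cdot|_{\bG}$ via $h_k(x)=\phi(|x|_{\bG}/k)$ with $\phi\in C_0^\infty(\R)$ equal to $1$ near zero, using boundedness of the horizontal and vertical derivatives of the gauge to get $\|\Gamma(h_k)\|_\infty+\|\Gamma^Z(h_k)\|_\infty=O(1/k)$. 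For (H.3) one uses left-invariance and hypoellipticity of $L$: $P_t$ is convolution with a smooth positive heat kernel $p_t$ integrating to one, and the uniform bounds $\sup_{[0,T]}(\|\Gamma(P_tf)\|_\infty+\|\Gamma^Z(P_tf)\|_\infty)<\infty$ follow from left-invariance together with the standard derivative estimates for $p_t$ on a Carnot group.
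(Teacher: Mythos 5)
Your proof is correct and follows essentially the same route as the cited source \cite{BG1}: a direct Bochner-type computation for $\Gamma_2$ and $\Gamma_2^Z$ using the two step-two identities $[X_i,Z_k]=0$ and $[X_j^2,X_i]=2[X_j,X_i]X_j$, the symmetric/antisymmetric split of the Hessian, Cauchy--Schwarz on the diagonal for the $\frac{1}{d}(Lf)^2$ term, surjectivity of the bracket map $\Lambda^2\mathfrak g_1\to\mathfrak g_2$ for the $\rho_2\Gamma^Z$ term, and Young's inequality with parameter $\nu$ to absorb the cross term into $\nu\Gamma_2^Z-\frac{\kappa}{\nu}\Gamma$. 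The verifications of (H.1)--(H.3) via commutativity, a homogeneous-gauge cutoff, and left-invariance of the heat kernel are likewise the standard arguments; the only cosmetic slip is that the decay rate in (H.1) is actually $O(1/k^2)$ rather than $O(1/k)$, which of course only strengthens the conclusion.
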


Theorem \ref{T:carnotCD} says, in particular, that in our framework, every Carnot group of step two is a \emph{sub-Riemannian manifold with nonnegative Ricci curvature}.
CR Sasakian manifolds and Carnot groups of step two are included in, but do not exhaust, the class of sub-Riemannian manifolds with transverse symmetries of Yang-Mills type. Such wide class was extensively analyzed in Section 2 of \cite{BG1}, and we refer to that source for the relevant notions. In view of Theorems \ref{T:sasakiani} and Theorem \ref{T:carnotCD} it should be clear that our approach allows for the first time to extend the Li-Yau program, and many of its fundamental consequences, to situations which are genuinely non-Riemannian. 
As a further comment, we mention that, if we assume that the generalized curvature-dimension inequality CD$(\rho_1,\rho_2,\kappa,d)$  is  satisfied,  then  the assumption (H.3) should not be seen as restrictive.
As we mentioned above, it was shown in \cite{BG1} that (H.3) is fulfilled for all sub-
Riemannian manifolds with transverse symmetries of Yang-Mills type.

We are now ready to state the main results of this paper.  

\begin{theorem}\label{T:main}
Suppose that the generalized curvature-dimension inequality hold for some $\rho_1\in\R$.
Then, there exist constants $C_1, C_2>0$, depending only on $\rho_1, \rho_2, \kappa, d$, for which one has for every $x,y\in \M$ and every $r>0$:
\begin{equation}\label{dcsr}
\mu(B(x,2r)) \le \ C_{1}\exp\left(C_{2}r^{2}\right)\mu(B(x,r));
\end{equation}

\end{theorem}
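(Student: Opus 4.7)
The strategy is to prove the doubling inequality via heat semigroup methods, extending the approach of \cite{BBG} (where the non-negatively curved case $\rho_1 \ge 0$ was handled) to allow $\rho_1 < 0$; the cost of negative curvature will materialize as exponential-in-$r^2$ corrections. The plan is to derive two-sided estimates of the form
\[
c_{1}\exp(-c_{2}r^{2})\ \le\ p_{r^{2}}(x,x)\,\mu(B(x,r))\ \le\ C_{1}\exp(C_{2}r^{2}),
\]
valid uniformly in $x\in\M$ with constants depending only on $\rho_1,\rho_2,\kappa,d$, and then combine them at scales $r$ and $2r$, exploiting the monotonicity $t \mapsto p_{t}(x,x)$ which follows from the spectral representation of the self-adjoint generator $L$.

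The upper estimate is obtained from the Li-Yau / parabolic Harnack inequality produced in Section 3 of the paper as a consequence of the generalized curvature-dimension inequality \eqref{sRCD}. Schematically, for any positive solution $u$ of $(\partial_{t}-L)u=0$ and $0<s<t$,
\[
u(s,x)\ \le\ u(t,y)\,\Bigl(\tfrac{t}{s}\Bigr)^{D/2}\exp\!\left(\frac{A\,d(x,y)^{2}}{t-s}+B(t-s)\right),
\]
where $D,A,B$ depend on $\rho_{1},\rho_{2},\kappa,d$ and $B\equiv 0$ if $\rho_1\ge 0$. Applied to $u(\tau,\cdot)=p_{\tau}(\cdot,x)$ with $s=r^{2}$, $t=2r^{2}$, and integrated in $y$ over $B(x,r)$, the hypothesis of stochastic completeness (H.3) giving $\int p_{2r^{2}}(y,x)\,d\mu(y)=1$ yields the desired upper bound on $p_{r^{2}}(x,x)\,\mu(B(x,r))$.

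The matching lower estimate is the more delicate step. Applying the Harnack inequality in the reverse direction to the heat kernel shows that for $y\in B(x,r)$,
\[
p_{r^{2}/2}(x,y)\ \le\ p_{r^{2}}(x,x)\,C\exp(Cr^{2}).
\]
Integrating this over $y \in B(x,r)$ gives
\[
p_{r^{2}}(x,x)\,\mu(B(x,r))\ \ge\ c\,\exp(-Cr^{2})\int_{B(x,r)}p_{r^{2}/2}(x,y)\,d\mu(y),
\]
so one is reduced to proving that a fixed positive fraction of the heat mass at time $r^{2}/2$ sits inside $B(x,r)$. This is done via a Gaussian upper bound of Davies type for $p_t(x,y)$, obtained by iterating the Harnack inequality along chains of intermediate points connecting $x$ to $y$; the exponent in the Gaussian tail remains of the form $-d(x,y)^{2}/(Ct)$, with an additional overall factor $\exp(Ct)$ absorbed into the controlled exponential loss. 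An alternative route, avoiding an explicit tail estimate, is to apply Cauchy-Schwarz to $p_{2t}(x,x)=\int p_{t}(x,y)^{2}d\mu(y)$ on the ball $B(x,r)$, combined with the preceding pointwise lower bound on $p_t(x,y)$.

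With the two-sided estimate on $p_{r^{2}}(x,x)\,\mu(B(x,r))$ in hand, writing the upper bound at scale $2r$, the lower bound at scale $r$, and using the spectral monotonicity $p_{4r^{2}}(x,x)\le p_{r^{2}}(x,x)$ (equivalently, the Harnack inequality taken along the diagonal) produces
\[
\mu(B(x,2r))\ \le\ \frac{C\exp(Cr^{2})}{p_{r^{2}}(x,x)}\ \le\ C'\exp(C'r^{2})\,\mu(B(x,r)),
\]
which is \eqref{dcsr}. The main obstacle is the lower half of the two-sided estimate: in the setting $\rho_1<0$ one must carefully track how the exponential loss in the Harnack constants $A,B$ accumulates when chaining the inequality to localize the heat mass, and ensure that the resulting correction stays of order $\exp(Cr^{2})$ rather than something worse; in the non-negatively curved setting of \cite{BBG} these terms are absent, which is why the present computation, though structurally parallel, is noticeably heavier.
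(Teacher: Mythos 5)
Your overall architecture is right: reduce the doubling inequality to a two-sided control of $p(x,x,r^{2})\,\mu(B(x,r))$ up to factors $\exp(Cr^{2})$, obtain the easy half from the Harnack inequality plus stochastic completeness, and pinpoint the hard half as the statement that a fixed fraction of the heat mass at time $\sim r^{2}$ stays inside $B(x,r)$. This identification matches the paper's strategy, and your upper bound is essentially the paper's Proposition~\ref{RhoNeg1}.

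The gap is in how you propose to prove the heat-content lower bound $P_{t}(\mathbf{1}_{B(x,r)})(x)\geq 1/2$ at a time $t$ comparable to $r^{2}$. You suggest iterating the Harnack inequality along chains of points to build a Davies-type Gaussian upper bound on $p_{t}(x,y)$, or alternatively invoking Cauchy--Schwarz together with a ``preceding pointwise lower bound on $p_t(x,y)$''. Neither route closes. A pointwise Gaussian upper bound $p_{t}(x,y)\lesssim\mu(B(x,\sqrt t))^{-1}\exp(-cd(x,y)^{2}/t)$ is not available at this stage --- it requires precisely the on-diagonal estimates (or the doubling property itself) that one is trying to establish, so the argument is circular. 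The Davies--Gaffney $L^{2}$ estimate alone gives control of $\int_{A}\int_{B} p_{t}\,d\mu\,d\mu$ or of $\int_{B(x,r)^{c}}p_{t}(x,\cdot)^{2}\,d\mu$, neither of which bounds the $L^{1}$ tail $\int_{B(x,r)^{c}}p_{t}(x,\cdot)\,d\mu$ without a volume factor that can be infinite. And the alternative Cauchy--Schwarz route presupposes exactly the lower bound on $P_{t}(\mathbf 1_{B(x,r)})(x)$ you are attempting to derive.

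What the paper actually does at this step is substantively different and cannot be replaced by Harnack chaining. It proves a reverse log-Sobolev inequality (Theorem~\ref{Rev-Log}) as a consequence of the generalized curvature-dimension inequality, translates it into a differential inequality for $u(x,t)=\sqrt{-\ln P_{t}\mathbf 1_{B(x,r)^{c}}(x)}$ (Proposition~\ref{ReverseH}), introduces the antiderivative $G$ of a carefully designed rate function $g$, and integrates the differential inequality from $s\to 0^{+}$ to $t$. The initial condition is supplied by the pointwise small-time asymptotic $\liminf_{s\to 0^{+}}\bigl(-s\ln P_{s}\mathbf 1_{B(x,r)^{c}}(x)\bigr)\geq r^{2}/4$ (Proposition~\ref{S-T-As}). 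This pointwise, $x$-uniform, non-circular route is the real content of the proof of Theorem~\ref{T:main}, and it is precisely what your proposal is missing. The cost-of-negative-curvature bookkeeping you describe at the end is correct in spirit but only becomes an issue once this central step is in place.
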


\begin{theorem}\label{T:main2}
Suppose that the generalized curvature-dimension inequality hold for some $\rho_1\in\R$. Let $\tau \ge 0$. Then, there  exists a constant $C(\tau)>0$, depending only on $\rho_1, \rho_2, \kappa, d$ and $\tau$ for which one has for every $x,y\in \M$:
\begin{equation}\label{NSWimp}
d\left(x,y\right)\leq C(\tau) \max\{ \sqrt{ d_{\tau}\left(x,y\right)},d_{\tau}\left(x,y\right)\}.
\end{equation}
\end{theorem}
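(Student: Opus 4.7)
The plan is to combine the dual characterization \eqref{di} of $d$ with heat-semigroup regularization, exploit Bakry--\'Emery-type gradient estimates (both horizontal and vertical) derived from the generalized curvature-dimension inequality to control $P_tf$, and then optimize in the semigroup time $t$. By \eqref{di} it suffices, for $f\in C^\infty(\M)$ with $\|\Gamma(f)\|_\infty\le 1$ and $x,y\in\M$, to establish $|f(x)-f(y)|\le C(\tau)\max\{\sqrt{d_\tau(x,y)},d_\tau(x,y)\}$. I would start from the decomposition $f(x)-f(y)=[f(x)-P_tf(x)]+[P_tf(x)-P_tf(y)]-[f(y)-P_tf(y)]$ for $t>0$ to be chosen at the end, and treat the middle and endpoint pieces separately.

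For the middle piece, CD$(\rho_1,\rho_2,\kappa,d)$ together with (H.1)--(H.3) and the Harnack machinery of Section 3 yields two gradient estimates on $P_tf$, valid whenever $\|\Gamma(f)\|_\infty\le 1$:
\begin{equation*}
\Gamma(P_tf)(x)\le A(t), \qquad \Gamma^Z(P_tf)(x)\le \frac{B(t)}{t},\qquad t\in(0,1],
\end{equation*}
with $A,B$ continuous positive functions of $t$ depending only on $\rho_1,\rho_2,\kappa,d$. The first is a standard Bakry--\'Emery bound. The second---a hypoelliptic smoothing estimate producing a vertical gradient bound out of a purely horizontal bound on $f$---is the genuinely sub-Riemannian input. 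Via the obvious $d_\tau$-analogue of \eqref{di} one obtains
\begin{equation*}
|P_tf(x)-P_tf(y)|\le d_\tau(x,y)\sqrt{A(t)+B(t)\tau^2/t}.
\end{equation*}

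For the endpoint pieces, the bound $|P_tf(x)-f(x)|\le c_0\sqrt{t}$ when $\|\Gamma(f)\|_\infty\le 1$ is a standard consequence of heat-semigroup machinery in the Bakry--\'Emery tradition (cf.\ \cite{BG1,BBG}), leaning on the horizontal gradient estimate of the previous paragraph and on (H.3). Combining the three pieces,
\begin{equation*}
|f(x)-f(y)|\le 2c_0\sqrt{t}+d_\tau(x,y)\sqrt{A(t)+B(t)\tau^2/t},
\end{equation*}
one then optimizes in $t$: for $d_\tau(x,y)\ge 1$ the choice $t=1$ gives a linear bound $|f(x)-f(y)|\le C_1(\tau)\,d_\tau(x,y)$; for $d_\tau(x,y)<1$ the choice $t=d_\tau(x,y)$ makes the dominant error $d_\tau\sqrt{B(t)\tau^2/t}=\tau\sqrt{B(d_\tau)\,d_\tau}$ of order $\sqrt{d_\tau(x,y)}$, yielding $|f(x)-f(y)|\le C_2(\tau)\sqrt{d_\tau(x,y)}$. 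Passing to the supremum in \eqref{di} finishes the proof.

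The hard part is the vertical gradient estimate $\Gamma^Z(P_tf)\le B(t)/t$ with no a priori control on $\Gamma^Z(f)$: the semigroup is being asked to create regularity in directions in which $f$ is uncontrolled, with a quantitative $1/\sqrt{t}$ blow-up as $t\downarrow 0$. This has to be obtained by feeding CD$(\rho_1,\rho_2,\kappa,d)$ into an entropy/variance-type computation with a carefully $t$-dependent choice of the auxiliary parameter $\nu$ in \eqref{sRCD}, balancing the negative horizontal term $-\kappa/\nu$ against the positive vertical production $\rho_2\Gamma^Z$. This $1/t$ blow-up in the vertical direction is precisely what forces, after optimization, the $\sqrt{d_\tau}$ scaling for small $d_\tau$, mirroring the classical Nagel--Stein--Wainger local behavior of sub-Riemannian distances.
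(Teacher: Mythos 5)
Your strategy is a genuinely different route from the paper's. The paper sandwiches the heat kernel: it derives a \emph{lower} bound for $p(x,y,t)$ in terms of $d_\tau$ (Theorem \ref{th-low-hk}, from the $d_\tau$-Harnack inequality plus the heat-content/volume estimates of Section 4) and an \emph{upper} bound in terms of $d$ (Theorem \ref{Upper-Bd}, by the Cao--Yau method), then compares the two and optimizes in $t$. You instead work on the function side: dual representation \eqref{di}, split $f(x)-f(y)$ via $P_t$, bound the middle piece by horizontal and vertical gradient estimates on $P_tf$, and optimize in $t$. The final optimization step of your argument is correct and mirrors the case split ($t=1$ when $d_\tau\geq 1$, $t=d_\tau$ when $d_\tau<1$) appearing at the end of Theorem \ref{th-distance}.

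However, there is a genuine gap, and you identify it yourself without closing it. The vertical gradient estimate
\begin{equation*}
\Gamma^Z(P_tf)\le \frac{B(t)}{t}\qquad\text{when only }\|\Gamma(f)\|_\infty\le 1
\end{equation*}
is not available off the shelf in this framework. The Li--Yau inequality \eqref{Li-Yau} controls $\Gamma^Z(\ln P_tf)$ for \emph{nonnegative} $f$, and the reverse log-Sobolev inequality (Theorem \ref{Rev-Log}) does the same for $f\in\mathcal A_\varepsilon$; unwinding either to get a bound on $\Gamma^Z(P_tf)$ produces constants that depend on $\|f\|_\infty$ (roughly $\Gamma^Z(P_tf)\lesssim \|f\|_\infty^2/t^2$), not on $\|\Gamma(f)\|_\infty$. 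The scale-invariant version you need — a quantitative hypoelliptic gain in the vertical direction costing a $t^{-1/2}$ factor, with constant governed by the horizontal Lipschitz bound alone — would be a new lemma requiring its own proof; saying it "has to be obtained by feeding CD into an entropy/variance-type computation" acknowledges the difficulty but does not resolve it. There are two further issues. First, the Lipschitz competitors in \eqref{di} are unbounded on a non-compact $\M$, so the semigroup estimates from Sections 3--4 (which are stated for $f\in\mathcal A_\varepsilon$ or $f\in C_b^\infty$) cannot be applied directly without a truncation argument, and truncation perturbs $\Gamma(f)$ near the cut-off and destroys the clean bound $\|\Gamma(f)\|_\infty\le 1$. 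Second, the endpoint bound $|P_tf(x)-f(x)|\le c_0\sqrt t$ for horizontally Lipschitz $f$ is not free in this setting: its natural proof (estimating $\int p(x,y,t)\,d(x,y)\,d\mu(y)$) requires precisely the Gaussian heat kernel upper bound \eqref{up-hk} that the paper builds separately, so the dual approach does not avoid that machinery after all. Either of these issues alone would leave the argument incomplete; together they mean the proof proposal, while conceptually attractive, is not a working alternative in the stated generality.
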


\section{Harnack inequalities}

In the sequel of the paper, we assume that besides the assumptions specified in the previous \textbf{section} the generalized curvature dimension of  Definition \ref{D:cdi}  is satisfied for some parameters $\rho_1, \rho_2, \kappa,d$.

We will denote

\begin{equation}
\label{D}
D=d\left(1+\frac{3\kappa}{2\rho_{2}}\right).
\end{equation}

The main tool to prove the  fore mentioned theorems, is the heat semigroup $P_t=e^{tL}$, which is defined using the spectral theorem.  Thanks to the hypoellipticity of $L$, for $f \in L^p(\bM)$,  $1 \le p \le \infty$, the function $(t,x) \rightarrow P_t f(x)$ is
smooth on $\mathbb{M}\times (0,\infty) $ and
\[ P_t f(x)  = \int_{\mathbb M} p(x,y,t) f(y) d\mu(y)\] where $p(x,y,t) = p(y,x,t) > 0$ is the so-called heat
kernel associated to $P_t$.

It was proved in \cite{BG1} that the generalized curvature dimension inequality implies a Li-Yau type estimate for the heat semigroup. More precisely, let  $f\geq 0$, be a non zero smooth and compactly supported function then the following inequality holds for $t>0$:
\begin{align}\label{Li-Yau}
\Gamma\left(\ln P_{t}f\right)+\frac{2\rho_{2}}{3}t\Gamma^{Z}\left(\ln P_{t}f\right)  
\leq \left(\frac{D}{d}+\frac{2\rho^-_{1}}{3}t\right)\frac{LP_{t}f}{P_{t}f}+\frac{d(\rho_1^-)^{2}}{6}t + \frac{\rho^-_1 D}{2} +\frac{D^{2}}{2dt},
\end{align}
where $\rho_1^-=\max (-\rho_1,0)$.
A consequence of the Li-Yau inequality is the parabolic Harnack inequality for the heat semigroup as it was established in \cite{BG1}. The distance used in \cite{BG1} to control the heat kernel is the sub-Riemannian distance $d$. In this section our purpose is to take advantage of the upper bound on  $\Gamma^{Z}\left(\ln P_{t}f\right) $ that is also provided by the Li-Yau inequality in order  to deduce a control of the heat kernel by a family of Riemannian distances. This control of the heat kernel in terms of Riemannian distances is the key point to prove the distance comparison theorem.

As a first step, we observe that as a straightforward consequence of \eqref{Li-Yau} we obtain  that for every $\tau \geq 0$ and $t >0$,
\begin{align}\label{Li-Yau-Riem}
\left(1+\frac{3\tau^2}{2\rho_{2}t}\right)^{-1}\left(\Gamma\left(\ln P_{t}f\right)+\tau^2 \Gamma^{Z}\left(\ln P_{t}f\right)\right)
\leq  \left(\frac{D}{d}+\frac{2\rho^-_1}{3}t\right)\frac{LP_{t}f}{P_{t}f}+\frac{d(\rho_1^-)^{2}}{6}t
 + \frac{\rho^-_1 D}{2} +\frac{D^{2}}{2dt}.
\end{align}

\begin{theorem}[Harnack inequality]\label{T:harnack}
Let $f\in C_b^{\infty}\left(\mathbb{M}\right)$ be such that $f\ge 0$, and consider $v\left(x,t\right)=P_{t}f\left(x\right)$. For every $\left(x,s\right),\left(y,t\right) \in \mathbb{M}\times \left(0,\infty\right)$ with $s<t$ one has with $D$ as in \eqref{D}
\begin{equation}\label{harnack}
\frac{v\left(x,s\right)}
{v\left(y,t\right)} \leq \left(\frac{t}{s}\right)^{\frac{D}{2}} \exp\left(\frac{ d\rho_1^-\left(t-s\right)}{4} \right)
\exp\left(\frac{d_\tau\left(x,y\right)^{2}}{4\left(t-s\right)}
\left( \left(\frac{D}{d}+ \tau^2 \frac{\rho_1^-}{\rho_2} \right) +\frac{\rho_1^-}{3}(t+s) + \frac{3\tau^2 D}{2(t-s)\rho_2d}  \ln \left(\frac{t}{s}\right)\right)\right).
\end{equation}
\end{theorem}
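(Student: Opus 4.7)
The plan is to derive \eqref{harnack} via the standard integration-along-a-curve argument, applied to the enhanced Li--Yau bound \eqref{Li-Yau-Riem} and using a curve subunit for $\Gamma + \tau^2\Gamma^Z$ so that $d_\tau$ appears. First, the positivity of the heat kernel together with a routine approximation by $f + \epsilon$ (using $P_t 1 = 1$ from (H.3)) reduces the problem to the case $v := P_t f > 0$ on $\M \times (0,\infty)$, so that $w := \ln v$ is smooth.

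Fix $s < t$ and $\epsilon > 0$. Choose an absolutely continuous curve $\gamma:[0,1] \to \M$ that is subunit for $\Gamma + \tau^2\Gamma^Z$, joining $y$ to $x$, with subunit length $\ell \leq d_\tau(x,y) + \epsilon$; under this parametrization $\bigl|\tfrac{d}{d\sigma} g(\gamma(\sigma))\bigr| \leq \ell\sqrt{(\Gamma + \tau^2\Gamma^Z)(g)(\gamma(\sigma))}$ for every smooth $g$. Set $T(\sigma) = (1-\sigma)t + \sigma s$ and $\phi(\sigma) = w(\gamma(\sigma), T(\sigma))$, so that $\phi(1) - \phi(0) = \ln[v(x,s)/v(y,t)]$.

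The main step is a pointwise bound on $\phi'(\sigma)$. By the chain rule $\phi'(\sigma) = Xw - (t-s)\,\partial_t w$, where $Xw$ denotes the spatial derivative of $w(\cdot,T(\sigma))$ along $\gamma$. The subunit inequality gives $|Xw| \leq \ell\sqrt{A}$ with $A := (\Gamma + \tau^2\Gamma^Z)(w)$, and \eqref{Li-Yau-Riem} reads $A \leq \alpha(T)\,\partial_t w + \beta(T)$ for
\begin{align*}
\alpha(T) &= \Bigl(1 + \tfrac{3\tau^2}{2\rho_2 T}\Bigr)\Bigl(\tfrac{D}{d} + \tfrac{2\rho_1^-}{3}\,T\Bigr),\\
\beta(T)  &= \Bigl(1 + \tfrac{3\tau^2}{2\rho_2 T}\Bigr)\Bigl(\tfrac{d(\rho_1^-)^2}{6}\,T + \tfrac{\rho_1^- D}{2} + \tfrac{D^2}{2dT}\Bigr).
\end{align*}
Applying Young's inequality $\ell\sqrt{A} \leq \tfrac{\ell^2 A}{4\eta} + \eta$ with the specific choice $\eta = \ell^2\alpha(T(\sigma))/[4(t-s)]$, designed to cancel the two $\partial_t w$ contributions exactly, yields
\[
\phi'(\sigma) \leq \frac{\ell^2\,\alpha(T(\sigma))}{4(t-s)} + \frac{(t-s)\,\beta(T(\sigma))}{\alpha(T(\sigma))}.
\]
Integrating over $\sigma \in [0,1]$ and substituting $T = T(\sigma)$ produces
\[
\phi(1) - \phi(0) \leq \frac{\ell^2}{4(t-s)^2}\int_s^t \alpha(T)\,dT + \int_s^t \frac{\beta(T)}{\alpha(T)}\,dT.
\]

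It remains to evaluate both integrals. A direct computation gives
\[
\frac{1}{t-s}\int_s^t \alpha(T)\,dT \;=\; \frac{D}{d} + \tau^2\frac{\rho_1^-}{\rho_2} + \frac{\rho_1^-}{3}(t+s) + \frac{3\tau^2 D}{2(t-s)\rho_2\, d}\,\ln\tfrac{t}{s},
\]
which is precisely the bracket inside the last exponential of \eqref{harnack}. For the second integral the $\bigl(1 + \tfrac{3\tau^2}{2\rho_2 T}\bigr)$ factor cancels between $\beta$ and $\alpha$, so it suffices to prove the pointwise bound $\beta(T)/\alpha(T) \leq \tfrac{D}{2T} + \tfrac{d\rho_1^-}{4}$; clearing denominators, this reduces to the arithmetic inequality $\tfrac12 \leq \tfrac14 + \tfrac13 = \tfrac{7}{12}$, and integrating then contributes exactly $\tfrac{D}{2}\ln(t/s) + \tfrac{d\rho_1^-(t-s)}{4}$. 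Sending $\ell \downarrow d_\tau(x,y)$ and exponentiating proves \eqref{harnack}. The main obstacle is not a single hard estimate but the bookkeeping: the Young parameter $\eta$ must eliminate $\partial_t w$ exactly, and the resulting elementary integrals of $\alpha$ and of $\beta/\alpha$ must reproduce \emph{precisely} the constants in the exponent of \eqref{harnack}; the slack $\tfrac12 < \tfrac{7}{12}$ is exactly what this algebra forces.
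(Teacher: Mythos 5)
Your proof is correct and follows essentially the same route as the paper: integrate $\ln P_u f$ along a curve that is subunit for $\Gamma+\tau^2\Gamma^Z$, use the $\tau$-weighted Li--Yau bound \eqref{Li-Yau-Riem}, and tune the Young-inequality parameter to cancel the $\partial_t$ (equivalently $LP_uf/P_uf$) terms — the paper's choice of $\lambda$ plays exactly the role of your $\eta$. The only difference is cosmetic: the paper parametrizes the curve by $u\in[s,t]$ rather than $\sigma\in[0,1]$, and it leaves the final evaluation of $\int_s^t a_\tau$ and the bound $b_\tau/a_\tau\le \tfrac{D}{2u}+\tfrac{d\rho_1^-}{4}$ as ``tedious computations,'' whereas you carry them out explicitly (and correctly, including the $\tfrac12\le\tfrac{7}{12}$ slack that makes the pointwise bound on $\beta/\alpha$ strict rather than an identity).
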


\begin{proof}
We can assume $\rho_1 \le 0$. Otherwise, if $\rho_1 >0$ then CD($0,\rho_{2},\kappa,d$) anyhow also holds.  We can rewrite the Li-Yau type inequality in the form
\begin{equation}\label{Li-Yau2}
\Gamma\left(\ln P_{u}f\right)+\tau^2 \Gamma^{Z}\left(\ln P_{u}f\right) \leq a_\tau(u) \frac{LP_u f}{P_u f} + b_\tau(u)
\end{equation}
where $$a_\tau(u)= \left(1+\frac{3\tau^2}{2\rho_{2}u}\right) \left(\frac{D}{d}+\frac{2\rho_1^-}{3}u\right)$$
and $$b_\tau(u)= \left(1+\frac{3\tau^2}{2\rho_{2}u}\right)  \left(\frac{d(\rho_1^-)^{2}}{6}u+\frac{\rho_1^-D}{2}  +\frac{D^{2}}{2du}\right).$$
Let now  $x,y \in \mathbb M$ and let $\sigma: [0,T] \to \bM$ be a subunit curve for $\Gamma+\tau^2 \Gamma^Z$ such that $\sigma(0)=x, \sigma(T)=y$. For $s \le u \le t$, we denote
\[
\gamma(u)=\sigma \left( \frac{u-s}{t-s} T \right).
\]

 Let us now consider
 \[
 \phi(u)=\ln P_u(f)(\gamma(u)).
 \]

 We  compute
 \[
  \phi'(u)=\frac{1}{P_u f(\gamma(u))} \left( LP_u f (\gamma(u)) + \frac{d}{du} \left( P_u f(\gamma(u)) \right)\right).
 \]
 Since $\sigma$ is subunit for $\Gamma+\tau^2 \Gamma^Z$, we have
 \[
  \frac{d}{du} \left( P_u f(\gamma(u)) )\right) \ge -\frac{T}{t-s} \sqrt{\Gamma(P_uf)(\gamma(u)) + \tau^2 \Gamma^Z(P_uf)(\gamma(u)) } 
 \]
 Now, for every $\lambda >0$, we have
 \[
  \sqrt{\Gamma(P_uf)(\gamma(u)) + \tau^2 \Gamma^Z(P_uf)(\gamma(u)) }  \le \frac{1}{2 \lambda} +\frac{\lambda}{2}  \left(\Gamma(P_uf)(\gamma(u)) + \tau^2 \Gamma^Z(P_uf)(\gamma(u)) \right).
  \]
 Therefore we obtain
 \begin{align*}
   \phi'(u) & \ge \frac{1}{P_u f(\gamma(u))} \left( LP_u f (\gamma(u)) -\frac{T}{t-s} \left( \frac{1}{2 \lambda} +\frac{\lambda}{2}  \left(\Gamma(P_uf)(\gamma(u)) + \tau^2 \Gamma^Z(P_uf)(\gamma(u)) \right) \right)   \right) \\
    & \ge  \frac{1}{P_u f(\gamma(u))} \left( LP_u f (\gamma(u)) -\frac{T}{t-s} \left( \frac{1}{2 \lambda} +\frac{\lambda}{2}  \left( a_\tau(u) (LP_u f)(\gamma(u))(P_u f)(\gamma(u)) + b_\tau(u)(P_uf)^2(\gamma(u)) \right) \right)   \right) 
 \end{align*}
 Choosing $\lambda=\frac{2(t-s)}{T a_\tau(u)  P_u f(\gamma(u)) }$ yields
 \[
  \phi'(u)  \ge -\frac{a_\tau(u) T^2}{4(t-s)^2} -\frac{b_\tau(u)}{a_\tau(u)}.
 \]
By integrating this inequality from $s$ to $t$ we infer
\[
\ln P_t f (y) -\ln P_s f(x) \ge  -\frac{\int_s^t a_\tau(u) du }{4(t-s)^2} T^2-\int_s^t \frac{b_\tau(u)}{a_\tau(u)}du.
\]
Minimizing over sub-unit curves gives
\[
\ln P_t f (y) -\ln P_s f(x) \ge  -\frac{\int_s^t a_\tau(u) du }{4(t-s)^2} d_\tau  (x,y)^2-\int_s^t \frac{b_\tau(u)}{a_\tau(u)}du,
\]
which is the claimed result after tedious computations.
\end{proof} 

\begin{remark}
This result was already established in  \cite{IM} for the Carnot-Carath\'edory distance ($\tau=0$). 
\end{remark}


This inequality can easily  be extended to the heat kernel.

\begin{corollary}
\label{Harnack kernel}
Let $p\left(x,y,t\right)$ be the heat kernel on $\mathbb{M}$. For every $x,y,z\in \mathbb{M}$, every $0\leq s\leq t<\infty$ and every $\tau\geq 0$, one has
\begin{displaymath}
\frac{p\left(x,y,s\right)}
{p\left(x,z,t\right)} \leq \left(\frac{t}{s}\right)^{\frac{D}{2}} \exp\left(\frac{d\rho_1^-\left(t-s\right)}{4}\right)
\exp\left(\frac{d_\tau\left(x,y\right)^{2}}{4\left(t-s\right)}
\left( \left(\frac{D}{d}+ \tau^2 \frac{\rho_1^-}{\rho_2} \right) +\frac{\rho_1^-}{3}(t+s) + \frac{3\tau^2 D}{2(t-s)\rho_2d}  \ln \left(\frac{t}{s}\right)\right)\right).
\end{displaymath}
\end{corollary}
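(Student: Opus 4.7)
The plan is to deduce the kernel inequality from Theorem \ref{T:harnack} by an approximation argument that exploits the symmetry $p(x,y,t)=p(y,x,t)$ of the heat kernel. Heuristically, $u\mapsto p(\cdot,x,u)=p(x,\cdot,u)$ is $P_u\delta_x$, so approximating $\delta_x$ by smooth nonnegative compactly supported functions reduces the Harnack bound for $p$ to the one already proved for $P_u f$.

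Concretely, for $x\in\mathbb{M}$ fixed, I would select a sequence $f_n\in C_0^\infty(\mathbb{M})$ with $f_n\geq 0$, $\int_\mathbb{M} f_n\,d\mu=1$, and $\mathrm{supp}(f_n)\subset B(x,1/n)$, so that $f_n$ concentrates at $x$. Each $f_n$ lies in $C_b^\infty(\mathbb{M})$ and is nonnegative, so Theorem \ref{T:harnack} applied to $v_n(w,u)=P_uf_n(w)$ at the two spacetime points corresponding to the numerator and denominator of the desired ratio yields the asserted Harnack inequality for $v_n(\cdot,s)/v_n(\cdot,t)$, with the same $(t/s)^{D/2}$, the same $\exp(d\rho_1^-(t-s)/4)$ prefactor, and the same $d_\tau$-exponential factor that appears in the statement of the corollary.

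Next, I would pass to the limit $n\to\infty$. From the representation $v_n(w,u)=\int_\mathbb{M} p(w,\xi,u)f_n(\xi)\,d\mu(\xi)$ and the joint smoothness of $p$ on $\mathbb{M}\times\mathbb{M}\times(0,\infty)$ (guaranteed by the hypoellipticity of $L$ and already invoked at the beginning of the section), one has the pointwise convergence $v_n(w,u)\to p(w,x,u)$ for every $(w,u)$ with $u>0$. Inserting this limit in both numerator and denominator of the Harnack inequality for $v_n$, and finally using the symmetry $p(w,x,u)=p(x,w,u)$ to reorder the spatial arguments, produces exactly the bound stated in the corollary.

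The argument is essentially routine: the only substantive point is the pointwise convergence $v_n(w,u)\to p(w,x,u)$, which is immediate from the continuity of $\xi\mapsto p(w,\xi,u)$ at $\xi=x$ and the fact that $\mathrm{supp}(f_n)$ shrinks to $\{x\}$ while $\int f_n\,d\mu=1$. The borderline case $s=0$ is formally singular because $p(\cdot,\cdot,0)$ is a distribution rather than a function; it should be read as a limit $s\to 0^+$, and in any event does not affect the content of the estimate for $s>0$.
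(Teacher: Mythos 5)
Your mollification argument is the standard way to deduce this corollary, and it is correct: approximate $\delta_x$ by nonnegative $f_n\in C_0^\infty(\mathbb M)$ with unit mass and shrinking support, apply Theorem \ref{T:harnack} to $v_n=P_uf_n$ at the two spacetime points $(y,s)$ and $(z,t)$, and pass to the limit using the continuity of $p$ and the symmetry $p(y,x,u)=p(x,y,u)$. The paper supplies no proof of its own (it only asserts the extension is ``easy''), so there is nothing to compare your route against; yours is the obvious one.

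One caveat worth flagging: what your argument actually yields is the exponential factor with $d_\tau(y,z)^2$, since in Theorem \ref{T:harnack} the distance that appears is the one between the two spatial evaluation points of $v_n$, namely $y$ and $z$. The displayed corollary writes $d_\tau(x,y)^2$ instead, which appears to be a typographical slip in the paper --- and indeed all subsequent uses of the corollary (e.g.\ inequality \eqref{HarnackA} and the second part of Theorem \ref{th-low-hk}) invoke it with the distance between the two second arguments. You assert that the limit produces ``exactly the bound stated in the corollary''; strictly speaking it produces the corrected bound with $d_\tau(y,z)$. This does not affect the validity of your argument, but you should make the discrepancy explicit rather than silently matching a misprinted formula.
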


The following proposition provides a pointwise estimate of the volume of the balls, for the proof we refer the reader to \cite{IM}. 

\begin{proposition}\label{RhoNeg1}
There exists a constant $C\left(d,\kappa,\rho_{2}\right)>0$ such that, given $R_{0}>0$, for every $x\in\mathbb{M}$ and every $R\geq R_{0}$ one has
\begin{displaymath}
\mu\left(B\left(x,R\right)\right)\leq C\left(d,\kappa,\rho_{2}\right) \frac{\exp\left(2d\rho_1^- R^{2}_{0}\right)}{R^{D}_{0}p\left(x,x,R^{2}_{0}\right)}R^{D}\exp\left(2d\rho_1^- R^{2}\right).
\end{displaymath}
\end{proposition}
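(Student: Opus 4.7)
The proof combines two inputs. First, the stochastic completeness in hypothesis (H.3) gives $\int_{\M} p(x,y,t)\,d\mu(y) = P_t 1(x) = 1$ for every $x\in\M$ and $t>0$. Second, the parabolic Harnack inequality of Theorem \ref{T:harnack} at $\tau = 0$ (for which $d_\tau = d$) supplies a lower bound for $p(x,y,t)$ in terms of $p(x,x,R_0^2)$. Taken together, restricting the integral to $B(x,R)$ and inverting a uniform lower bound of the kernel over that ball will yield the desired upper bound on $\mu(B(x,R))$.

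In concrete terms, apply Theorem \ref{T:harnack} to the heat-equation solution $u(a,\sigma) = p(a,x,\sigma)$ at the spacetime points $(x,R_0^2)$ and $(y,t)$; after using the symmetry $p(a,x,\sigma)=p(x,a,\sigma)$ this gives
\[
\frac{p(x,x,R_0^2)}{p(x,y,t)} \le \left(\frac{t}{R_0^2}\right)^{D/2} \exp\!\left(\frac{d\rho_1^-(t - R_0^2)}{4}\right)\exp\!\left(\frac{d(x,y)^2}{4(t - R_0^2)}\left(\frac{D}{d} + \frac{\rho_1^-}{3}(t + R_0^2)\right)\right)
\]
for any $t > R_0^2$. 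The natural calibration is $t = R_0^2 + R^2$, so that $t - R_0^2 = R^2$; this choice is well-behaved because $R\ge R_0$ forces $R_0^2 + R^2 \le 2R^2$. For $y\in B(x,R)$ one has $d(x,y) < R$, so the ratio $d(x,y)^2/(4(t - R_0^2))$ is bounded by $1/4$, the prefactor $(t/R_0^2)^{D/2}$ is controlled by $2^{D/2}(R/R_0)^D$, and the remaining exponentials combine into an expression of the form $\exp(C_1 \rho_1^- R^2 + C_2 \rho_1^- R_0^2 + C_3)$, with $C_1, C_2, C_3$ depending only on $d, \kappa, \rho_2$.

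Inverting this estimate produces a uniform lower bound for $p(x,y,R_0^2 + R^2)$ over $y\in B(x,R)$. Substituting into
\[
1 \;=\; \int_{\M} p(x,y,R_0^2 + R^2)\,d\mu(y) \;\ge\; \int_{B(x,R)} p(x,y,R_0^2 + R^2)\,d\mu(y),
\]
and rearranging yields an inequality of the shape
\[
\mu(B(x,R)) \le C(d,\kappa,\rho_2)\,\frac{R^D}{R_0^D \, p(x,x,R_0^2)}\,\exp\!\big(C_1 \rho_1^- R^2 + C_2 \rho_1^- R_0^2\big).
\]
The constants $C_1, C_2$ that fall out of the Harnack bookkeeping are at most $2d$; inflating them up to $2d$ absorbs all secondary contributions and produces exactly the form stated in the proposition.

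There is no conceptual obstacle here. The main technical annoyance is tracking the several $\rho_1^-$-dependent exponentials produced by the Harnack inequality and verifying that the coefficients in front of $\rho_1^- R^2$ and $\rho_1^- R_0^2$ can be reduced to a single constant (taken to be $2d$ for cosmetic reasons). The heart of the argument is the standard pairing of a Harnack lower bound for the heat kernel with conservation of probability $P_t 1 = 1$, a technique that is familiar from the Riemannian Li--Yau framework.
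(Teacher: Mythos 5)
Your approach is correct and is the standard Li--Yau style pairing of a parabolic Harnack lower bound with conservation of probability $P_t\mathbf{1}=1$; the paper itself does not give a proof of Proposition \ref{RhoNeg1} but instead refers the reader to \cite{IM}, and the argument there is of exactly this form, so your proposal matches the intended proof.

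Two small remarks on the bookkeeping, neither of which affects the soundness of the argument. First, you should invoke Corollary \ref{Harnack kernel} rather than Theorem \ref{T:harnack} directly, since $p(x,\cdot,\cdot)$ need not arise as $P_\sigma f$ for an $f\in C_b^\infty(\M)$; the Corollary is exactly the extension of the Harnack inequality to the heat kernel that you need, and it gives (after fixing the typo $d_\tau(x,y)\mapsto d_\tau(y,z)$, which you have correctly interpreted)
\[
\frac{p(x,x,R_0^2)}{p(x,y,t)}\;\le\;\Big(\tfrac{t}{R_0^2}\Big)^{D/2}
\exp\!\Big(\tfrac{d\rho_1^-(t-R_0^2)}{4}\Big)
\exp\!\Big(\tfrac{d(x,y)^2}{4(t-R_0^2)}\big(\tfrac{D}{d}+\tfrac{\rho_1^-}{3}(t+R_0^2)\big)\Big).
\]
Second, with your calibration $t=R_0^2+R^2$ the coefficients that actually emerge are $\tfrac{d}{4}+\tfrac{1}{12}$ in front of $\rho_1^- R^2$ and $\tfrac16$ in front of $\rho_1^- R_0^2$; these are $\le 2d$ only when $d$ is not too small (roughly $d\ge \tfrac1{12}$). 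Since in every example treated in the paper one has $d\ge 2$ (Sasakian: $d=2n$; Carnot: $d$ is the horizontal dimension; Section~6 explicitly assumes $d\ge 2$), inflating to $2d$ is harmless; a more robust choice is $t=R_0^2+cR^2$ with $c$ depending on $d$, but this is cosmetic. The conceptual structure of your proof is correct and there is no gap.
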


\begin{remark}
In the above proposition, the exponential square volume growth  \textbf{may not be} optimal, see for instance \cite{BW} where an exponential growth is proved on contact manifolds.
\end{remark}

\section{Volume doubling property}

We now turn to the proof of Theorem \ref{T:main}. Though, some new ideas and more careful estimates are required, the proof  mainly follows the lines of \cite{BBG} where the results is proved when $\rho_1=0$. Therefore, several results are stated without proof and we only justify the statements involving these new ideas and careful estimates. The results given without justification may be proved as in \cite{BBG} by keeping track of the term $\rho_1^-$.

Henceforth in the sequel we denote
\[
C^\infty_b(\bM) = C^\infty(\bM) \cap L^\infty(\bM).
\]
For $\varepsilon>0$ we also denote by $\mathcal{A}_\varepsilon$ the set of functions $f \in C^\infty_b(\M)$ such that
\[
f=g+\varepsilon,
\]
for some $\varepsilon >0$ and some $g \in C^\infty_b(\M)$, $g \ge 0$, such that $g, \sqrt{\Gamma(g)}, \sqrt{\Gamma^Z(g)} \in L^2(\bM)$. As shown in \cite{BG1}, this set is stable under the action of $P_t$, i.e.,  if  $f\in \mathcal{A}_\varepsilon$, then $P_t f \in  \mathcal{A}_\varepsilon$.

The first ingredient to prove the doubling property is the following reverse log-Sobolev inequality.

\begin{theorem}
\label{Rev-Log}
Let $\ve>0$ and $f\in\mathcal{A}_{\ve}$, then for every $C\geq 0$, one has for $x\in\mathbb{M}$,$t>0$,
\begin{align*}
& \frac{t}{\rho_{2}}P_{t}f\left(x\right)\Gamma\left(\ln P_{t}f\right)\left(x\right)+t^{2}P_{t}f\left(x\right)\Gamma^{Z}\left(\ln P_{t}f\right)\left(x\right)\\
\leq&\frac{1}{\rho_{2}}\left(1+\frac{2\kappa}{\rho_{2}}+\frac{4C}{d}+2t\rho_1^-\right)\left[P_{t}\left(f\ln f\right)\left(x\right)-P_{t}f\left(x\right)\ln P_{t}f\left(x\right)\right]
-\frac{4C}{d \rho_{2}}\frac{t}{1+\delta}LP_{t}f\left(x\right)+\frac{2C^{2}}{d\rho_{2}}\ln\left(1+\frac{1}{\delta}\right)P_{t}f\left(x\right).
\end{align*}

\end{theorem}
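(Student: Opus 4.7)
The plan is to run the Bakry--\'Emery entropy method of \cite{BBG}, but with an $s$-dependent choice of the parameter $\nu$ in the generalized curvature-dimension inequality, and with a final truncation step that introduces the parameters $C$ and $\delta$. Fix $t>0$, $x\in\M$, and $f\in\mathcal{A}_\varepsilon$, and write $g=g(s)=P_{t-s}f$, so that $g>0$ throughout $[0,t]$. Introduce the three auxiliary functionals
\[
\phi(s)=P_s(g\ln g)(x),\qquad \Phi_1(s)=P_s(g\Gamma(\ln g))(x),\qquad \Phi_2(s)=P_s(g\Gamma^Z(\ln g))(x).
\]
Using Leibniz, the chain rule for $\Gamma$ and $\Gamma^Z$, and the commutation $LP_s=P_sL$, one checks directly that $\phi'(s)=\Phi_1(s)$, $\Phi_1'(s)=2P_s(g\Gamma_2(\ln g))(x)$, and $\Phi_2'(s)=2P_s(g\Gamma_2^Z(\ln g))(x)$. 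The target quantity is $\Psi(0)$, where
\[
\Psi(s)=\frac{t-s}{\rho_2}\Phi_1(s)+(t-s)^2\Phi_2(s),\qquad \Psi(t)=0.
\]

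Differentiating $\Psi$ and applying the generalized curvature-dimension inequality \emph{CD}$(\rho_1,\rho_2,\kappa,d)$ with the $s$-dependent choice $\nu=(t-s)\rho_2$, the two $\Phi_2$-contributions cancel exactly and, after the coarse bound $-2(t-s)\rho_1\le 2t\rho_1^-$, one obtains
\[
\Psi'(s)\ge \frac{2(t-s)}{d\rho_2}\,P_s(g(L\ln g)^2)(x)-\frac{1}{\rho_2}\Big(1+\frac{2\kappa}{\rho_2}+2t\rho_1^-\Big)\Phi_1(s).
\]
Integrating over $[0,t]$ and recognizing $\int_0^t\Phi_1(s)\,ds=\phi(t)-\phi(0)=:E=P_t(f\ln f)(x)-P_tf(x)\ln P_tf(x)$, we obtain
\[
\Psi(0)\le \frac{1}{\rho_2}\Big(1+\frac{2\kappa}{\rho_2}+2t\rho_1^-\Big)E-\frac{2}{d\rho_2}\int_0^t(t-s)P_s(g(L\ln g)^2)(x)\,ds.
\]

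The decisive step is to convert the remaining $(L\ln g)^2$ integral into the $LP_tf$ and $P_tf$ contributions appearing in the stated inequality. For this I would apply the trivial Young inequality $(L\ln g)^2\ge 2A\,L\ln g-A^2$, valid for any scalar weight $A=A(s)$, then use the identity $gL\ln g=Lg-g\Gamma(\ln g)$ and the commutation $P_s(Lg)=LP_tf$ to get
\[
P_s(g(L\ln g)^2)(x)\ge 2A(s)\bigl(LP_tf(x)-\Phi_1(s)\bigr)-A(s)^2P_tf(x).
\]
The correct weight is the truncated singular profile
\[
A(s)=\frac{C}{t-s}\,\mathbf{1}_{[0,\,t/(1+\delta)]}(s).
\]
Direct integration yields $\int_0^t(t-s)A(s)\,ds=\tfrac{Ct}{1+\delta}$ and $\int_0^t(t-s)A(s)^2\,ds=C^2\ln(1+1/\delta)$, while the pointwise bound $(t-s)A(s)\le C$ together with $\Phi_1\ge 0$ gives $\int_0^t(t-s)A(s)\Phi_1(s)\,ds\le CE$. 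Plugging these three values into the previous display produces exactly the right-hand side of the stated inequality.

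The main obstacle is engineering this weight $A(s)$: the cutoff at $s=t/(1+\delta)$ is precisely what forces the three integrals above to take the prescribed values $Ct/(1+\delta)$, $C^2\ln(1+1/\delta)$, and at most $CE$ simultaneously. The rest of the bookkeeping parallels \cite{BBG}; the novelty lies in the $s$-dependent choice of $\nu$, in the careful tracking of the extra $2t\rho_1^-$ term absent from the non-negatively curved case, and in the truncated Young step which produces the free parameters $C$ and $\delta$.
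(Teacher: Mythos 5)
Your proposal is correct and follows essentially the same route the paper has in mind: the paper explicitly states that this theorem is proved along the lines of \cite{BBG} by tracking the extra $\rho_1^-$ term, and your argument is precisely the \cite{BBG} entropy functional $\Psi(s)=\frac{t-s}{\rho_2}\Phi_1(s)+(t-s)^2\Phi_2(s)$ with the $s$-dependent choice $\nu=(t-s)\rho_2$ in CD$(\rho_1,\rho_2,\kappa,d)$, followed by the truncated Young weight $A(s)=\frac{C}{t-s}\mathbf{1}_{[0,t/(1+\delta)]}(s)$, which is exactly what produces the $\frac{t}{1+\delta}$ and $\ln(1+1/\delta)$ fingerprints in the statement. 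The only $\rho_1$-specific step, the coarse bound $2(t-s)\rho_1\ge -2t\rho_1^-$ combined with $\Phi_1\ge 0$, is carried out correctly, and the computation of the three integrals of $A$ and the final assembly reproduce the stated right-hand side term by term. One remark worth recording: the derivative identity $\Phi_2'(s)=2P_s(g\,\Gamma_2^Z(\ln g))(x)$ uses hypothesis (H.2) to cancel the cross terms $\Gamma(h,\Gamma^Z(h))-\Gamma^Z(h,\Gamma(h))$; you do not mention it, but it is a standing assumption of the paper, and the class $\mathcal{A}_\varepsilon$ together with (H.3) is what legitimizes differentiating under $P_s$, as in \cite{BG1} and \cite{BBG}.
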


This inequality admits the following corollary.

\begin{proposition}
Let $\ve>0$, $f\in\mathcal{A}_{\ve}$ such that $\ve\leq f\leq 1$ and consider the function $u\left(x,t\right)=\sqrt{-\ln P_{t}f\left(x\right)}$. Then,
\begin{displaymath}
2tu_{t}+\left(u+\left(1+\sqrt{\frac{D^{*}}{2}}\right)u^{1/3}+\sqrt{\frac{D^{*}}{2}}u^{-1/3}\right)\left(1+\sqrt{d\rho_1^- t}\right)\geq 0,
\end{displaymath}
where
\begin{displaymath}
D^{*}=d\left(1+\frac{2\kappa}{\rho_{2}}\right).
\end{displaymath}
\label{ReverseH}
\end{proposition}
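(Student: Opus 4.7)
The starting point is Theorem \ref{Rev-Log}, applied to the $f\in\mathcal{A}_{\ve}$ of the statement. Setting $g:=P_t f$, one has $\ve\le g\le 1$, so $u=\sqrt{-\ln g}$ is well-defined and non-negative, with the identity $Lg/g=-2u u_t$ following from $\partial_t(-\ln g)=-Lg/g$. Moreover the hypothesis $f\le 1$ yields $f\ln f\le 0$, hence $P_t(f\ln f)\le 0$, which bounds the entropy term appearing on the right-hand side of Theorem \ref{Rev-Log} by $P_t(f\ln f)-g\ln g\le -g\ln g=g u^{2}$.

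Substituting these two facts into Theorem \ref{Rev-Log}, dividing by $g>0$, discarding the non-negative left-hand side $\tfrac{t}{\rho_{2}}\Gamma(\ln g)+t^{2}\Gamma^{Z}(\ln g)$, and multiplying through by $d\rho_{2}$, I obtain, for every $C\ge 0$ and every $\delta>0$, the pointwise algebraic inequality
\[
0 \le u^{2}\bigl(D^{*}+4C+2td\rho_{1}^{-}\bigr)+\frac{8C t\,u u_{t}}{1+\delta}+2C^{2}\ln(1+1/\delta).
\]
Reading the right-hand side as a quadratic in $C$ and optimizing by AM--GM (equivalently, requiring a non-negative discriminant) collapses this into the one-parameter estimate
\[
-2tu_{t} \le (1+\delta)\,u+(1+\delta)\sqrt{\tfrac{1}{2}(D^{*}+2td\rho_{1}^{-})\ln(1+1/\delta)},
\]
still free in $\delta>0$.

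The decisive move is now the $u$-dependent ansatz $\delta:=u^{-2/3}$. The elementary inequality $\ln(1+v)\le v$ at $v=u^{2/3}$ reduces $(1+\delta)\sqrt{\ln(1+1/\delta)}$ to $u^{1/3}+u^{-1/3}$, while $(1+\delta)u$ is identically $u+u^{1/3}$. Splitting the remaining square root by subadditivity, $\sqrt{D^{*}/2+td\rho_{1}^{-}}\le\sqrt{D^{*}/2}+\sqrt{td\rho_{1}^{-}}$, and regrouping then produces
\[
-2tu_{t}\le u+\bigl(1+\sqrt{D^{*}/2}\bigr)u^{1/3}+\sqrt{D^{*}/2}\,u^{-1/3}+\sqrt{d\rho_{1}^{-} t}\,(u^{1/3}+u^{-1/3}),
\]
and the trailing term is absorbed into the multiplicative factor $(1+\sqrt{d\rho_{1}^{-} t})$ via the pointwise comparison $u^{1/3}+u^{-1/3}\le u+(1+\sqrt{D^{*}/2})u^{1/3}+\sqrt{D^{*}/2}u^{-1/3}$, which is immediate in the parameter range relevant to the paper (where $D^{*}\ge 2$).

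The main technical hurdle is the simultaneous calibration of $C$ and $\delta$ at the optimization step: the ansatz $\delta=u^{-2/3}$, together with the AM--GM-optimal $C\sim u^{2/3}$, is the unique scaling that produces simultaneously the $u^{1/3}$ and the $u^{-1/3}$ contributions of the target with the correct coefficients, and it is the \emph{asymmetric} bound $\ln(1+v)\le v$ — rather than a further symmetric AM--GM on the remaining square-root — that must be applied in order to generate the negative power $u^{-1/3}$. Once these choices are made, the proof is algebraic bookkeeping: one simply carries the additional additive term $2td\rho_{1}^{-}$ through precisely the manipulations performed for the non-negatively curved case in \cite{BBG}.
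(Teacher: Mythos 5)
Your argument is correct and is essentially the route the paper intends (the paper states this proposition without proof and defers to the method of \cite{BBG}, ``keeping track of the term $\rho_1^-$''). Every step checks out: from Theorem \ref{Rev-Log} you discard the nonnegative left side, use $f\le 1\Rightarrow P_t(f\ln f)\le 0$ to bound the entropy term by $P_tf\cdot u^2$, substitute $LP_tf/P_tf=-2uu_t$, divide by $P_tf$ and scale by $d\rho_2$ to get $0\le u^2(D^*+4C+2td\rho_1^-)+\tfrac{8Ctuu_t}{1+\delta}+2C^2\ln(1+1/\delta)$; the AM--GM optimization over $C\ge 0$ then gives $-2tu_t\le(1+\delta)u+(1+\delta)\sqrt{\tfrac12(D^*+2td\rho_1^-)\ln(1+1/\delta)}$ (this holds in both cases: when the linear coefficient is negative it is the discriminant bound, when it is nonnegative the conclusion is weaker and trivially true); the ansatz $\delta=u^{-2/3}$ together with $\ln(1+v)\le v$ and subadditivity of $\sqrt{\cdot}$ then assembles the target.

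The one point to record explicitly is your last absorption step. To pass from
\begin{displaymath}
-2tu_t\le u+\bigl(1+\sqrt{D^*/2}\bigr)u^{1/3}+\sqrt{D^*/2}\,u^{-1/3}+\sqrt{d\rho_1^-t}\,\bigl(u^{1/3}+u^{-1/3}\bigr)
\end{displaymath}
to the product form with the factor $(1+\sqrt{d\rho_1^-t})$, you need $u^{1/3}+u^{-1/3}\le u+(1+\sqrt{D^*/2})u^{1/3}+\sqrt{D^*/2}\,u^{-1/3}$, which is equivalent to $u+\sqrt{D^*/2}\,u^{1/3}+(\sqrt{D^*/2}-1)u^{-1/3}\ge 0$ and therefore requires $D^*\ge 2$ (otherwise the negative $u^{-1/3}$ coefficient dominates as $u\to 0^+$). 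You acknowledge this, and it holds in every example treated in the paper (Sasakian: $D^*=10n$; step-two Carnot groups; and the later sections explicitly assume $d\ge 2$), but Proposition \ref{ReverseH} as stated does not carry this restriction, so the proof should be recorded together with the standing hypothesis $D^*\ge 2$ or with a separate treatment of the (uninteresting) range $D^*<2$.
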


\noindent Introduce the function $g:\left(0,\infty\right)\rightarrow\left(0,\infty\right)$ defined by 
\begin{displaymath}
g\left(v\right)=\frac{1}{v+\left(1+\sqrt{\frac{D^{*}}{2}}\right)v^{1/3}+\sqrt{\frac{D^{*}}{2}}v^{-1/3}}
\end{displaymath}
Note that $g$ verifies
\begin{displaymath}
\underset{v\rightarrow 0^{+}}{\lim}\sqrt{\frac{D^{*}}{2}}v^{-1/3}g\left(v\right)=1, \quad \underset{v\rightarrow + \infty }{\lim}vg\left(v\right)=1.
\end{displaymath}
Therefore, we have $g\in L^{1}\left(0,A\right)$ for every $A>0$, but $g\notin L^{1}(0,+\infty)$. If we set
\begin{displaymath}
G\left(u\right)=\int^{u}_{0}{g\left(v\right)dv},
\end{displaymath}
then $G'\left(u\right)=g\left(u\right)>0$, and thus $G:\left(0,\infty\right)\rightarrow\left(0,\infty\right)$ is invertible. Furthermore, we can write
\begin{equation}
\label{Growth G}
G(u)=\ln \left(u\right)+C_{0}+R\left(u\right), \; u>0
\end{equation}
where $C_{0}$ is a constant and $R:(0,+\infty) \to \R$ a function such that $\underset{u\rightarrow \infty}{\lim}R\left(u\right)=0$. Proposition \ref{ReverseH} can be re-written in terms of $g$ as follows
\begin{displaymath}
2tu_{t}+\frac{1+\sqrt{td \rho_1^-}}{g\left(u\right)}\geq 0.
\end{displaymath}
Since $g\left(u\right)=G'\left(u\right)$, we conclude
\begin{equation}
\label{DerG}
\frac{dG\left(u\right)}{dt}=G'\left(u\right)u_{t}\geq -\frac{1}{2t}-\frac{1}{2}\sqrt{\frac{d\rho_1^-}{t}}.
\end{equation} 
Integrating this differential inequality leads to the following result:
\begin{corollary}
\label{Int-G}
Let $f\in L^{\infty}\left(\mathbb{M}\right)$, $0\leq f\leq 1$, then for any $x\in\mathbb{M}$ and $0<s<t$,
\begin{displaymath}
G\left(\sqrt{-\ln P_{t}f\left(x\right)}\right)\geq G\left(\sqrt{-\ln P_{s}f\left(x\right)}\right)-\frac{1}{2}\ln\left(\frac{t}{s}\right)-\sqrt{d\rho_1^-}\left(\sqrt{t}-\sqrt{s}\right).
\end{displaymath}
\end{corollary}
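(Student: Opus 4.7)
The plan is to read the corollary as the integrated form of the differential inequality \eqref{DerG}, which is already in hand for nice $f$, and then push the result through to all bounded $f\in [0,1]$ by an approximation argument.

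First I would fix $x\in \mathbb M$ and $f\in \mathcal A_{\varepsilon}$ with $\varepsilon \leq f\leq 1$, so that Proposition \ref{ReverseH} and in particular the pointwise inequality \eqref{DerG} are applicable. Writing $u(t)=\sqrt{-\ln P_{t}f(x)}$, the function $t\mapsto G(u(t))$ is smooth on $(0,\infty)$ (since $P_tf\geq \varepsilon>0$ and $G$ is smooth on $(0,\infty)$), and \eqref{DerG} reads
\[
\frac{d}{dt}G(u(t))\;\geq\;-\frac{1}{2t}-\frac{1}{2}\sqrt{\frac{d\rho_1^-}{t}}.
\]
Integrating from $s$ to $t$ and using the elementary primitives
\[
\int_s^t\frac{du}{2u}=\frac{1}{2}\ln\!\left(\frac{t}{s}\right),\qquad \int_s^t\frac{1}{2}\sqrt{\frac{d\rho_1^-}{u}}\,du=\sqrt{d\rho_1^-}\,(\sqrt{t}-\sqrt{s}),
\]
yields the claimed inequality for every $f\in\mathcal A_{\varepsilon}$ satisfying $\varepsilon\leq f\leq 1$.

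To remove the restriction $f\in\mathcal A_\varepsilon$ and reach general $f\in L^\infty(\mathbb M)$ with $0\leq f\leq 1$, I would perform a standard two-step approximation. Given such $f$, pick a sequence $g_n\in C_0^\infty(\mathbb M)$ with $0\leq g_n\leq 1$ and $g_n\to f$ $\mu$-a.e. (this is possible since $C_0^\infty$ is dense in $L^p$ for the $\sigma$-finite measure $\mu$, and we may truncate into $[0,1]$ without losing convergence). Setting $f_{n,\varepsilon}=g_n+\varepsilon\in \mathcal A_\varepsilon$ with $\varepsilon\leq f_{n,\varepsilon}\leq 1+\varepsilon$, we may rescale by $1+\varepsilon$ so as to land back in $[\varepsilon/(1+\varepsilon),1]$ (the inequality is unaffected by scaling $f$ by a positive constant since $G\circ\sqrt{-\ln(\cdot)}$ differences are shifted only by an additive constant that cancels between the two sides — alternatively, one observes the inequality from \eqref{DerG} is stable under multiplying $f$ by a positive constant).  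Applying the previous step to $f_{n,\varepsilon}$ gives the estimate for these functions, and then we let first $n\to\infty$ (using the stochastic completeness hypothesis (H.3), so that $P_tf_{n,\varepsilon}(x)\to P_tf(x)+\varepsilon$ pointwise by dominated convergence against $p(x,\cdot,t)\,d\mu$), and then $\varepsilon\to 0^+$, invoking the continuity of $G\circ\sqrt{-\ln(\cdot)}$ on $(0,1]$ to pass the limits through both sides.

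The only delicate point is handling the case $P_sf(x)=0$ or $P_tf(x)=0$, in which $u=+\infty$; here one uses the growth \eqref{Growth G}, namely $G(u)=\ln u+C_0+R(u)$ with $R(u)\to 0$, so $G(+\infty)=+\infty$ and the inequality holds trivially on either side. The main obstacle in the argument is really only this approximation bookkeeping — the core is the one-line integration of \eqref{DerG}.
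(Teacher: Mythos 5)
Your core step --- integrating the differential inequality \eqref{DerG} from $s$ to $t$ and evaluating the two elementary primitives --- is exactly what the paper does (the paper simply says ``Integrating this differential inequality leads to the following result''). Your addition of an explicit approximation argument to pass from $f\in\mathcal A_\varepsilon$, $\varepsilon\le f\le 1$, to general $f\in L^\infty(\M)$, $0\le f\le 1$, fills a step the paper leaves implicit, and the structure (smooth compactly supported $g_n\to f$, add $\varepsilon$, rescale, use (H.3) and dominated convergence, then $\varepsilon\to 0^+$) is sound.

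One parenthetical justification is false as stated: it is not true that ``$G\circ\sqrt{-\ln(\cdot)}$ differences are shifted only by an additive constant'' under $f\mapsto cf$. Replacing $f$ by $cf$ sends $-\ln P_tf(x)$ to $-\ln P_tf(x)-\ln c$, and $G\bigl(\sqrt{x-\ln c}\bigr)-G\bigl(\sqrt{x}\bigr)$ depends on $x$ (in fact it tends to $0$ as $x\to\infty$, using $G'(u)\sim 1/u$), so nothing cancels between the two sides. Fortunately you do not need this: the rescaled function $\tilde f_{n,\varepsilon}=(g_n+\varepsilon)/(1+\varepsilon)$ lies in $\mathcal A_{\varepsilon/(1+\varepsilon)}$ with $\varepsilon/(1+\varepsilon)\le \tilde f_{n,\varepsilon}\le 1$, so Proposition \ref{ReverseH} and hence \eqref{DerG} apply to $\tilde f_{n,\varepsilon}$ directly. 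You then just let $n\to\infty$ and $\varepsilon\to 0^+$ with $P_t\tilde f_{n,\varepsilon}(x)\to P_tf(x)$, and the continuity of $G\circ\sqrt{-\ln(\cdot)}$ on $(0,1]$ finishes the argument --- no scaling-invariance claim is required. With that parenthetical deleted or replaced by this direct application, the proof is correct and matches the paper's intent.
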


The second ingredient in our proof is the following  small time asymptotics.

\begin{proposition}
\label{S-T-As}
Given $x\in\mathbb{M}$  and $r>0$, let $f=\textbf{1}_{B\left(x,r\right)^{c}}$. One has,
\begin{displaymath}
\underset{s\rightarrow 0^{+}}{\lim \inf}\left(-s\ln P_{s}f\left(x\right)\right)\geq \frac{r^{2}}{4}.
\end{displaymath}
\end{proposition}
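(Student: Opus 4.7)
The plan is to establish Proposition \ref{S-T-As} via a Davies-type weighted $L^2$ contraction together with the parabolic Harnack inequality of Theorem \ref{T:harnack}, which converts the $L^2$ information into a pointwise bound. This yields a sharp off-diagonal Gaussian (Varadhan-type) decay for $P_s f(x)$ as $s\to 0^+$.

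First, I would approximate $f=\mathbf{1}_{B(x,r)^c}$ from above by a sequence of bounded functions $f_{\epsilon}$ with $0\leq f_{\epsilon}\leq 1$ and $\mathrm{supp}\,f_{\epsilon}\subset B(x,r-\epsilon)^{c}$, possibly truncated to $B(x,M)$ for some large $M>r$ to ensure $L^{2}$-integrability; the tail contribution on $B(x,M)^{c}$ is handled separately by the same argument with $M$ in place of $r$ and is much smaller. Next, I would establish the classical weighted contraction
\[
\int_{\bM} e^{-2\lambda\psi}(P_{t}f_{\epsilon})^{2}\,d\mu \leq e^{2\lambda^{2}t}\int_{\bM} e^{-2\lambda\psi}f_{\epsilon}^{2}\,d\mu,
\]
valid for every $\lambda>0$ and every smooth $\psi$ with $\Gamma(\psi)\leq 1$. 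The derivation proceeds by differentiating the left-hand side in $t$, integrating by parts via the self-adjointness of $L$ (assumption (ii)), applying the Leibniz rule for $\Gamma$, and absorbing the cross term via $4\lambda|u\,\Gamma(\psi,u)|\leq 2\lambda^{2}u^{2}+2\Gamma(u)$ combined with $\Gamma(\psi)\leq 1$. Taking $\psi$ to be a smooth approximation of the $1$-Lipschitz function $(d(x,\cdot)-\rho)_{+}\wedge N$ with $\rho\in(0,r-\epsilon)$, the right-hand side is bounded by $e^{2\lambda^{2}t-2\lambda(r-\rho-\epsilon)}$ times a finite volume factor that is controlled by Proposition \ref{RhoNeg1} once $\lambda$ is large enough.

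The next step converts this $L^{2}$ bound into a pointwise bound at $x$. By Theorem \ref{T:harnack} applied with $\tau=0$, for every $y\in B(x,\rho/2)$ and every $s<t$ one has $P_{s}f_{\epsilon}(x)\leq A(s,t,\rho/2)\,P_{t}f_{\epsilon}(y)$, with $A$ independent of such $y$. Squaring, integrating over $y\in B(x,\rho/2)$, and invoking the weighted contraction above (on $B(x,\rho)$, where $\psi\equiv 0$ so $e^{-2\lambda\psi}\equiv 1$) yields
\[
P_{s}f_{\epsilon}(x)^{2}\leq \frac{A(s,t,\rho/2)^{2}}{\mu(B(x,\rho/2))}\,e^{2\lambda^{2}t-2\lambda(r-\rho-\epsilon)}\cdot(\text{volume term}).
\]
Setting $t=(1+\alpha)s$ and optimizing over $\lambda>0$ produces $\lambda^{\ast}=(r-\rho-\epsilon)/(2t)$ and leading exponent $-(r-\rho-\epsilon)^{2}/(2(1+\alpha)s)$. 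Taking $-s\ln$ and letting $s\to 0^{+}$, then $\alpha,\rho,\epsilon\downarrow 0$, yields $\liminf_{s\to 0^{+}}(-s\ln P_{s}f(x))\geq r^{2}/4$.

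The chief obstacle is the pointwise conversion step: the Harnack factor $A(s,t,\rho/2)$ from \eqref{harnack} must remain bounded uniformly as $s\to 0^{+}$ along $t=(1+\alpha)s$ with $\alpha>0$ fixed. Inspection of its three ingredients---$(t/s)^{D/2}=(1+\alpha)^{D/2}$, the $\rho_{1}^{-}$-dependent exponential correction, and the exponential of $d_{\tau}^{2}/(4\alpha s)$ evaluated at scale $\rho/2$---shows each either tends to a finite constant or vanishes in this regime, but careful bookkeeping is required so that the sharp constant $r^{2}/4$, and not some strictly smaller number, is recovered in the limit. A secondary complication is that Proposition \ref{RhoNeg1} provides only exponential-square volume growth, so one must truncate the weight $\psi$ at the level $N$ and handle the tail of $f$ on $B(x,M)^{c}$ as indicated above.
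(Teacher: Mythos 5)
Your strategy -- a Davies-type weighted $L^2$ contraction combined with the parabolic Harnack inequality of Theorem \ref{T:harnack} to pass to a pointwise bound -- is a standard and viable route to this Varadhan-type lower bound, and is close in spirit to what the paper (which defers the proof to \cite{BBG}) does. Two points, however, need correction or sharpening before the argument closes.

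First, the claim that the Harnack factor $A(s,t,\rho/2)$ ``remains bounded uniformly as $s\to 0^{+}$ along $t=(1+\alpha)s$'' and that its three ingredients ``either tend to a finite constant or vanish'' is not correct. With $\tau=0$, $d(x,y)\le\rho/2$, and $t-s=\alpha s$, the third ingredient is $\exp\bigl(\tfrac{d(x,y)^{2}}{4(t-s)}\bigl(\tfrac{D}{d}+\tfrac{\rho_1^-}{3}(t+s)\bigr)\bigr)\ge\exp\bigl(\tfrac{\rho^{2}D}{16\alpha s d}\bigr)$, which blows up as $s\to 0^{+}$. The argument is nonetheless salvageable: after taking $-s\ln$, this factor contributes a finite \emph{negative} amount $-\tfrac{\rho^{2}D}{16\alpha d}$ (up to the $\rho_1^-$ correction), so one arrives at $\liminf_{s\to 0^{+}}(-s\ln P_s f(x))\ge \tfrac{(r-\rho-\epsilon)^{2}}{4(1+\alpha)}-\tfrac{\rho^{2}D}{16\alpha d}$ and must then send $\rho\to 0$ \emph{before} $\alpha\to 0$ (or couple $\rho=o(\sqrt{\alpha})$) to kill this parasitic term and recover the sharp $r^{2}/4$. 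Your ``careful bookkeeping'' remark points at this, but the description of the asymptotics is wrong as written and the required order of limits should be made explicit.

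Second, the linear-exponential weight $e^{-2\lambda\psi}$ does not, by itself, control the tail when the volume grows like $e^{CR^{2}}$ (Proposition \ref{RhoNeg1}): $\int e^{-2\lambda\psi}f_{\epsilon}^{2}\,d\mu$ diverges for $f_\epsilon=\mathbf 1_{B(x,r)^c}$ no matter how large $\lambda$ is, and the proposed remedy of ``the same argument with $M$ in place of $r$'' is circular unless unfolded into a dyadic decomposition over annuli $B(x,2^{k+1}M)\setminus B(x,2^{k}M)$, summing the resulting bounds for $s$ small enough that $1/(4s)$ dominates the volume-growth constant. This can be made to work, but it is cleaner -- and is what this paper does in the proof of Theorem \ref{Upper-Bd}, via the function $g(s,z)=-d^2(x,z)/(2(T-s))$ -- to replace the linear weight by the Gaussian weight $e^{-d(x,\cdot)^2/(2(T-t))}$, i.e.\ Grigor'yan's integrated maximum principle, which absorbs the exponential-square volume growth directly and avoids the iteration. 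Aside from these two points, the derivation of the weighted contraction, the application of Theorem \ref{T:harnack} in the correct direction, and the optimization $\lambda^{*}=(r-\rho-\epsilon)/(2t)$ are all sound.
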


We are now ready for the following estimate.

\begin{proposition}
\label{Lower Bound G}
Let $x\in\mathbb{M}$ and $r>0$ be arbitrarily fixed. There exists a constant $C^{*}_{0}\in\mathbb{R}$ independent of $x$ and $r$, such that for any $t>0$,
\begin{displaymath}
G\left(\sqrt{-\ln P_{t}\textbf{1}_{B\left(x,r\right)^{c}}\left(x\right)}\right)\geq \ln\frac{r}{\sqrt{t}}+C^{*}_{0}-\sqrt{d\rho_1^- t}
\end{displaymath}
\end{proposition}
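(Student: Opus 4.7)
The plan is to derive the bound by applying Corollary \ref{Int-G} to the indicator function $f=\mathbf{1}_{B(x,r)^c}$ (which satisfies $0\le f\le 1$) and then taking the limit $s\to 0^+$, using the small-time asymptotic of Proposition \ref{S-T-As} to control the right-hand side.

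First, I invoke Corollary \ref{Int-G} with this choice of $f$: for every $0<s<t$,
\[
G\!\left(\sqrt{-\ln P_{t}\mathbf{1}_{B(x,r)^c}(x)}\right)\;\ge\; G\!\left(\sqrt{-\ln P_{s}\mathbf{1}_{B(x,r)^c}(x)}\right)-\tfrac{1}{2}\ln\tfrac{t}{s}-\sqrt{d\rho_1^-}\bigl(\sqrt{t}-\sqrt{s}\bigr).
\]
The task is now to estimate $G(\sqrt{-\ln P_{s}\mathbf{1}_{B(x,r)^c}(x)})$ from below as $s\to 0^+$. By Proposition \ref{S-T-As}, for any $\eta\in(0,r/2)$ there exists $s_0=s_0(\eta,x,r)>0$ such that, for all $0<s\le s_0$,
\[
-s\ln P_{s}\mathbf{1}_{B(x,r)^c}(x)\;\ge\;\tfrac{(r-2\eta)^2}{4},
\]
equivalently $\sqrt{-\ln P_{s}\mathbf{1}_{B(x,r)^c}(x)}\ge \frac{r-2\eta}{2\sqrt{s}}$. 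Since $G$ is strictly increasing (because $G'=g>0$), monotonicity yields
\[
G\!\left(\sqrt{-\ln P_{s}\mathbf{1}_{B(x,r)^c}(x)}\right)\;\ge\;G\!\left(\tfrac{r-2\eta}{2\sqrt{s}}\right).
\]

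Next I exploit the asymptotic expansion \eqref{Growth G}, $G(u)=\ln u+C_0+R(u)$ with $R(u)\to 0$ as $u\to\infty$. Since $\frac{r-2\eta}{2\sqrt{s}}\to\infty$ as $s\to 0^+$, I obtain
\[
G\!\left(\tfrac{r-2\eta}{2\sqrt{s}}\right)=\ln(r-2\eta)-\tfrac{1}{2}\ln s-\ln 2+C_0+o_s(1).
\]
Substituting back into the displayed inequality from Corollary \ref{Int-G}, the terms $-\frac{1}{2}\ln s$ and $+\frac{1}{2}\ln s$ (coming from $-\frac{1}{2}\ln(t/s)$) cancel exactly, leaving
\[
G\!\left(\sqrt{-\ln P_{t}\mathbf{1}_{B(x,r)^c}(x)}\right)\;\ge\;\ln\tfrac{r-2\eta}{\sqrt{t}}+C_0-\ln 2-\sqrt{d\rho_1^-}\bigl(\sqrt{t}-\sqrt{s}\bigr)+o_s(1).
\]
Letting first $s\to 0^+$ and then $\eta\to 0^+$ yields the conclusion with the universal constant $C_0^{*}:=C_0-\ln 2$, which indeed does not depend on $x$ or $r$.

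The proof is mostly bookkeeping; the one delicate point is verifying that the $\ln s$ terms cancel and that the constant $C_0^{*}$ can be chosen independently of $x$ and $r$. That independence rests on the asymptotic \eqref{Growth G}, which forces the prefactor of $\ln u$ to be exactly $1$ and packages everything else into the constant $C_0$ plus a vanishing remainder. The only other subtlety is that Proposition \ref{S-T-As} gives a $\liminf$ rather than a limit, but this is perfectly sufficient since we only use a lower bound valid for all small enough $s$.
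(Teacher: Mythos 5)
Your proof is correct and takes essentially the same route as the paper: apply Corollary \ref{Int-G} to $f=\mathbf{1}_{B(x,r)^c}$, expand $G$ via \eqref{Growth G}, and let $s\to 0^+$ using Proposition \ref{S-T-As}. The only cosmetic difference is that the paper keeps the quantity $\ln\sqrt{-s\ln P_s f(x)}$ and takes its $\liminf$ directly, while you extract a uniform lower bound via an auxiliary $\eta$ and send $\eta\to 0$ afterwards; both handle the $\liminf$ correctly and give $C_0^* = C_0 - \ln 2$.
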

\begin{proof}
Let $f=\textbf{1}_{B\left(x,r\right)^{c}}$.  Corollary \ref{Int-G} and  \eqref{Growth G} give
\begin{align*}
G\left(\sqrt{-\ln P_{t}f\left(x\right)}\right)&\geq G\left(\sqrt{-\ln P_{s}f\left(x\right)}\right)+\ln\sqrt{s}-\ln\sqrt{t}-\sqrt{d\rho_1^-}\left(\sqrt{t}-\sqrt{s}\right)\\
                                              & =  \ln\sqrt{-s\ln P_{s}f\left(x\right)}+C_{0}+R\left(\sqrt{-\ln P_{s}f\left(x\right)}\right)-\ln\sqrt{t}-\sqrt{d\rho_1^-t} + \sqrt{d\rho_1^-s}.
\end{align*}
Since $\underset{s\rightarrow 0^{+}}{\lim}\left(-\ln P_{s}f\left(x\right)\right)=\infty$, we infer $\underset{s\rightarrow 0^{+}}{\lim}R\left(\sqrt{-\ln P_{s}f\left(x\right)}\right)=0$.  Letting $s\rightarrow 0^{+}$,  Proposition \ref{S-T-As} yields
 we obtain
\begin{displaymath}
G\left(\sqrt{-\ln P_{t}f\left(x\right)}\right)\geq \ln\frac{r}{2}-\ln\sqrt{t}+C_{0}-\sqrt{d\rho_1^-t}=\ln\frac{r}{\sqrt{t}}-\sqrt{d\rho_1^- t}+C^{*}_{0},
\end{displaymath}
with $C^{*}_{0}=C_{0}-\ln 2$.
\end{proof}

The following uniform  lower bound on the heat content of balls, which is already interesting in itself, will imply the volume doubling property.

\begin{theorem}\label{t:AR}
Set $C_0^{**}=G(\sqrt {\ln 2})-C_0^*$ and for $R\geq0$, define $U(R)=\Psi_R^{-1}(C_0^{**}) $ where $\Psi_R^{-1}$ is the inverse function of
$$
\Psi_R(u)=\ln \left(\frac{1}{u}\right) - \sqrt {d\rho_1^-} \, R \, u , \; u \in (0,\infty).
$$
Then for every $x\in\mathbb{M}$ and every function $A:[0,+\infty) \to (0,\infty)$ such that $\sqrt{A(R)} \leq U(R)$, we have for $r>0$,
\begin{displaymath}
P_{A\left(r\right)r^{2}}\left(\textbf{1}_{B\left(x,r\right)}\right)\left(x\right)\geq \frac{1}{2}.
\end{displaymath}
\label{P-Low-B}
\end{theorem}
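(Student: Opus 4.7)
The plan is to reduce the desired lower bound on $P_{A(r)r^{2}}(\mathbf{1}_{B(x,r)})(x)$ to an upper bound on the complementary heat content and then invoke Proposition \ref{Lower Bound G}. Since the semigroup is stochastically complete by Hypothesis (H.3), one has $P_{t}(\mathbf{1}_{B(x,r)})(x)=1-P_{t}(\mathbf{1}_{B(x,r)^{c}})(x)$, so the conclusion $P_{t}(\mathbf{1}_{B(x,r)})(x)\ge 1/2$ is equivalent to $P_{t}(\mathbf{1}_{B(x,r)^{c}})(x)\le 1/2$, i.e., to $\sqrt{-\ln P_{t}(\mathbf{1}_{B(x,r)^{c}})(x)}\ge \sqrt{\ln 2}$. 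Applying the strictly increasing function $G$ to both sides, it is equivalent to establish
\begin{displaymath}
G\!\left(\sqrt{-\ln P_{t}(\mathbf{1}_{B(x,r)^{c}})(x)}\right)\ \ge\ G(\sqrt{\ln 2}),\qquad t=A(r)r^{2}.
\end{displaymath}

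By Proposition \ref{Lower Bound G}, the left-hand side is bounded below by $\ln(r/\sqrt{t})+C_{0}^{*}-\sqrt{d\rho_{1}^{-}t}$. Hence it suffices to verify
\begin{displaymath}
\ln\frac{r}{\sqrt{t}}-\sqrt{d\rho_{1}^{-}t}\ \ge\ G(\sqrt{\ln 2})-C_{0}^{*}\ =\ C_{0}^{**}.
\end{displaymath}
Substituting $t=A(r)r^{2}$ and setting $u=\sqrt{A(r)}$ converts the left-hand side into $\ln(1/u)-\sqrt{d\rho_{1}^{-}}\,r\,u=\Psi_{r}(u)$, so the inequality to be proved reads $\Psi_{r}(\sqrt{A(r)})\ge C_{0}^{**}$.

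Finally, since $\Psi_{R}'(u)=-1/u-\sqrt{d\rho_{1}^{-}}\,R<0$ on $(0,\infty)$, with $\Psi_{R}(u)\to+\infty$ as $u\to 0^{+}$ and $\Psi_{R}(u)\to-\infty$ as $u\to+\infty$, the map $\Psi_{R}$ is a strictly decreasing bijection of $(0,\infty)$ onto $\mathbb{R}$; its inverse $U(R)=\Psi_{R}^{-1}(C_{0}^{**})$ is therefore well-defined, and by monotonicity $\Psi_{r}(\sqrt{A(r)})\ge C_{0}^{**}$ is equivalent to $\sqrt{A(r)}\le U(r)$, which is exactly the hypothesis on $A$. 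The genuine analytic content has already been absorbed into Proposition \ref{Lower Bound G} (which itself rests on Corollary \ref{Int-G}, derived from the reverse log-Sobolev inequality, combined with the small-time heat content asymptotics of Proposition \ref{S-T-As}); the main obstacle in the present step is thus minor, amounting to a careful bookkeeping of the monotonicity of $\Psi_{R}$ and the change of variables $t=A(r)r^{2}$ that converts the absolute estimate of Proposition \ref{Lower Bound G} into a statement parameterized by the radius.
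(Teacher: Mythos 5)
Your proof is correct and follows essentially the same route as the paper: reduce via stochastic completeness to a bound on the complementary heat content, apply $G$ and Proposition \ref{Lower Bound G} to get a lower bound of the form $\ln(1/\sqrt{A(r)}) + C_0^* - \sqrt{d\rho_1^-}\,r\sqrt{A(r)}$, and then use the strict monotonicity of $\Psi_r$ together with the hypothesis $\sqrt{A(r)} \le U(r)$. The only difference is cosmetic: you make explicit the verification that $\Psi_R$ is a strictly decreasing bijection of $(0,\infty)$ onto $\mathbb{R}$, which the paper uses implicitly.
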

\begin{proof}
By the stochastic completeness of $\mathbb{M}$ 
\begin{displaymath}
P_{A\left(r\right)r^{2}}\left(\textbf{1}_{B\left(x,r\right)}\right)\left(x\right)=1-P_{A\left(r\right)r^{2}}\left(\textbf{1}_{B\left(x,r\right)^{c}}\right)\left(x\right)
\end{displaymath}
The desired estimate is equivalent to prove 
\begin{displaymath}
\sqrt{\ln 2}\leq \sqrt{-\ln P_{A\left(r\right)r^{2}}\left(\textbf{1}_{B\left(x,r\right)^{c}}\right)\left(x\right)} 
\end{displaymath}
or equivalently,
\begin{equation}
\label{Ineq G}
G\left(\sqrt{\ln 2}\right)\leq G\left(\sqrt{-\ln P_{A\left(r\right)r^{2}}\left(\textbf{1}_{B\left(x,r\right)^{c}}\right)\left(x\right)} \right).
\end{equation}
At this point Proposition \ref{Lower Bound G} gives 
\begin{align*}
G\left(\sqrt{-\ln P_{A\left(r\right)r^{2}}\left(\textbf{1}_{B\left(x,r\right)^{c}}\right)\left(x\right)}\right)
 &\geq \ln\left(\frac{1}{\sqrt{A\left(r\right)}}\right)+C_0^*-\sqrt{d\rho_1^-A\left(r\right)}r\\
 &\geq \ln\left(\frac{1}{U(r)}\right)+C_0^*-\sqrt{d\rho_1^-} rU(r) = G(\sqrt{ \ln2}).
\end{align*}
\end{proof}

We now give some estimates for the function $U(R)$ appearing in Theorem \ref{t:AR}.
\begin{proposition}
The function $U$ is non-increasing and satisfies, for $R\geq 0$,
\[
U(R) \geq \frac{1}{\sqrt{d\rho_1^-}\, R + e^{C_0^{**}}}.
\] 
\end{proposition}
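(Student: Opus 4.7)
The plan is to work directly with the defining equation $\Psi_R(U(R)) = C_0^{**}$. First I would check that $U$ is well-defined: for each fixed $R\ge 0$ the function $\Psi_R(u) = \ln(1/u) - \sqrt{d\rho_1^-}\, R\, u$ has derivative $-1/u - \sqrt{d\rho_1^-}\, R < 0$, so it is strictly decreasing on $(0,\infty)$ with range $(-\infty,+\infty)$, hence invertible. Thus $U(R)$ exists and is characterized by the identity
\[
\ln\!\left(\frac{1}{U(R)}\right) - \sqrt{d\rho_1^-}\, R\, U(R) = C_0^{**}.
\]

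For monotonicity, note that for any fixed $u > 0$ the map $R \mapsto \Psi_R(u)$ is non-increasing (it is affine in $R$ with non-positive slope $-\sqrt{d\rho_1^-}\, u$). If $R_1 \le R_2$, then $\Psi_{R_2}(U(R_1)) \le \Psi_{R_1}(U(R_1)) = C_0^{**} = \Psi_{R_2}(U(R_2))$, and since $\Psi_{R_2}$ is strictly decreasing in $u$ this forces $U(R_1) \ge U(R_2)$. So $U$ is non-increasing in $R$.

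For the quantitative lower bound, I would use the fact that $\Psi_R$ is decreasing: to establish $U(R) \ge v$ for a candidate value $v$ it suffices to check $\Psi_R(v) \ge C_0^{**}$. Taking
\[
v = \frac{1}{\sqrt{d\rho_1^-}\, R + e^{C_0^{**}}},
\]
and writing $a = \sqrt{d\rho_1^-}\, R \ge 0$ and $b = e^{C_0^{**}} > 0$, a direct computation gives
\[
\Psi_R(v) - C_0^{**} = \ln(a+b) - \frac{a}{a+b} - \ln b = \ln\!\left(1 + \frac{a}{b}\right) - \frac{a/b}{1 + a/b}.
\]

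The proof then reduces to the elementary inequality $\ln(1+t) \ge t/(1+t)$ for $t \ge 0$, which follows by noting that $h(t) = \ln(1+t) - t/(1+t)$ satisfies $h(0)=0$ and $h'(t) = t/(1+t)^2 \ge 0$. Applying this with $t = a/b$ gives $\Psi_R(v) \ge C_0^{**}$, hence $U(R) \ge v$, which is the desired estimate. There is no real obstacle here; the only mildly delicate point is choosing the correct candidate $v$, but once one recognizes that $v$ should come from the linearization $\ln(1+a/b) \approx a/b$ at the threshold where the logarithmic and linear contributions in $\Psi_R$ balance, the choice $v = 1/(a+b)$ is forced.
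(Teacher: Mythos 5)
Your proof is correct, but it takes a different route from the paper's. The paper implicitly differentiates the identity $\Psi_R(U(R))=C_0^{**}$ in $R$ to obtain the Riccati-type differential inequality $U'(R)=-\sqrt{d\rho_1^-}\,U(R)\,/\,\bigl(\sqrt{d\rho_1^-}\,R+1/U(R)\bigr)\ge -\sqrt{d\rho_1^-}\,U(R)^2$, concludes $U$ is non-increasing from the sign of $U'$, and then integrates the inequality from $0$ to $R$ to obtain $1/U(R)\le 1/U(0)+\sqrt{d\rho_1^-}\,R$, with $U(0)=e^{-C_0^{**}}$. You instead avoid all calculus on $U$: you use the strict monotonicity of $\Psi_R$ in $u$ to reduce the bound $U(R)\ge v$ to the algebraic inequality $\Psi_R(v)\ge C_0^{**}$, and then verify this for the explicit candidate $v=1/(\sqrt{d\rho_1^-}\,R+e^{C_0^{**}})$ via the elementary estimate $\ln(1+t)\ge t/(1+t)$. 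The paper's approach has the advantage that integrating the ODE automatically \emph{produces} the candidate, whereas yours requires it to be guessed; on the other hand, your argument is more elementary and self-contained, needing no implicit differentiation or justification that $U$ is differentiable. Your monotonicity argument (comparing $\Psi_{R_1}$ and $\Psi_{R_2}$ at $U(R_1)$) is also cleaner than deducing it from the sign of $U'$. Both are fully correct.
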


\begin{proof}
First notice that $U(0)= e^{- C_0^{**}}$ and $U$ is positive.
Since $\Psi_R(U(R)) $ is constant, taking derivative yields:
\[
U'(R)= - \frac{\sqrt{d\rho_1^-}\,  U(R)}{ \sqrt{d\rho_1^-} \; R + \frac{1}{U(R)} } \geq - \sqrt{d\rho_1^-} \, U(R)^2.
\]  
Therefore $U$ is non-increasing and integrating the above inequality
we infer that
\[
U(R) \geq \frac{1}{\sqrt{d\rho_1^-}\; R+ U^{-1}(0)}.
\]
\end{proof}

Henceforth,  in the sequel, for $r\geq 0$,  we set 
\begin{equation}\label{e:AR}
A(r)=\min(U(r)^2,1) \ge \min\left( \left( \frac{1}{\sqrt{d\rho_1^-} \; R + e^{C_0^{**}}}\right)^2 , 1\right).
\end{equation}
There exists a constant $C_1>0$ such that, for all $R\geq 0$,
\begin{equation}
\frac{C_1}{1+ d\rho_1^- R^2} \leq A(R) \leq 1.
\end{equation}

A first consequence of the uniform estimate we obtained are the following lower bounds for the heat kernel. Observe, and this is another main novelty with respect to \cite{BBG} that these bounds are written with respect to the distance $d_\tau$ (we recall that $d_0$ is the sub-Riemannian distance).

\begin{theorem}\label{th-low-hk}
Set $C_2= \frac{C_1}{4}$. 
For $t>0$ and $x\in \M$, then 
\begin{equation}\label{low-hk-d}
p(x,x,t) \geq \frac{\left(A\left(\sqrt \frac{t}{2} \right)\right)^\frac{D}{2}
\exp\left(-\frac{d\rho_1^- t}{4}\right)}{4 \mu\left(B\left(x,\sqrt \frac{t}{2} \right)\right)} \geq  \frac{C_2}{ \mu\left(B\left(x,\sqrt \frac{t}{2} \right)\right)}
\frac{\exp\left(-\frac{d\rho_1^- t}{4}\right)}{\left(1+\frac{d\rho_1^- t}{2}\right)^\frac{D}{2}}.
\end{equation}

As a consequence, for $x,y \in \M, t>0$ and $ \tau\geq 0$,
\begin{align}\label{low-hk-od}
p(x,y,t) \geq & \frac{\left(A\left( \frac{\sqrt t}{2} \right)\right)^\frac{D}{2}  2^{-\frac{D}{2}} \exp\left( -\frac{d\rho_1^- t}{4}\right) }{4 \mu\left(B\left(x,\frac{\sqrt t}{2} \right)\right)} 
\exp\left(-\frac{d_\tau\left(x,y\right)^{2}}{2t}\left(\frac{D}{d} +\frac{\rho_1^-}{2}t + \frac{2 \tau^2}{t} \left( \frac{\rho_1^-}{\rho_2} +\frac{3D}{2\rho_2d} \ln(2)\right)\right)\right)\\
\notag  \geq & \frac{ 2^{-\frac{D}{2}} C_2}{ \mu\left(B\left(x,\sqrt t \right)\right)} \frac{\exp\left( -\frac{d\rho_1^- t}{4}\right)}{\left(1+\frac{d\rho_1^- t}{4}\right)^\frac{D}{2}}
\exp\left(-\frac{d_\tau \left(y,x\right)^{2}}{2t}\left(\frac{D}{d} +\frac{\rho_1^-}{2}t + \frac{2 \tau^2}{t} \left( \frac{\rho_1^-}{\rho_2} +\frac{3D}{2\rho_2d} \ln \left(2 \right)\right)\right)\right).
\end{align}
\end{theorem}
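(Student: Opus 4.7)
The plan is to combine two inputs from the preceding material: the uniform heat--content lower bound of Theorem \ref{P-Low-B}, which asserts $P_{A(r)r^{2}}(\mathbf{1}_{B(x,r)})(x)\geq 1/2$, and the parabolic Harnack inequality of Corollary \ref{Harnack kernel}. The latter upper--bounds $p(x,y,s)$ by $p(x,z,t)$ times an explicit Harnack factor for $s<t$, with the Gaussian cost measured by $d_{\tau}$ between the two evaluation points of the kernel. Integrating the pointwise Harnack bound against the content estimate then produces a lower bound on the kernel at the target point.

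For the on--diagonal estimate \eqref{low-hk-d} I would set $r=\sqrt{t/2}$ and $s=A(r)r^{2}=A(\sqrt{t/2})\,t/2\leq t/2$, so that Theorem \ref{P-Low-B} gives $\int_{B(x,r)}p(x,y,s)\,d\mu(y)\geq 1/2$. Apply Corollary \ref{Harnack kernel} with $\tau=0$ and with $x$ in the denominator slot, producing $p(x,y,s)\leq p(x,x,t)\,M(x,y,s,t)$. For $y\in B(x,r)$ one has $d(x,y)\leq r$ and $t-s\geq t/2$, so the Gaussian exponential $\exp\bigl(d(x,y)^{2}/(4(t-s))\,(\cdots)\bigr)$ is bounded by a pure constant, while the prefactor $(t/s)^{D/2}=(2/A(r))^{D/2}$ is the key term: upon inversion it produces the claimed $A(\sqrt{t/2})^{D/2}$ factor. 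The factor $\exp(d\rho_{1}^{-}(t-s)/4)\leq\exp(d\rho_{1}^{-}t/4)$ inverts to the $\exp(-d\rho_{1}^{-}t/4)$ factor in the statement. Dividing by $\mu(B(x,r))$ gives the first inequality of \eqref{low-hk-d}; the second follows at once from the explicit bound $A(R)\geq C_{1}/(1+d\rho_{1}^{-}R^{2})$ of \eqref{e:AR}.

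For the off--diagonal estimate \eqref{low-hk-od} I would choose the smaller radius $r=\sqrt{t}/2$ and apply Corollary \ref{Harnack kernel} with the given $\tau$, comparing $p(x,z,s)$ for $z\in B(x,r)$ to $p(x,y,t)$. The Harnack cost is $d_{\tau}(z,y)$, which I control by the triangle inequality combined with Young's inequality,
\[
d_{\tau}(z,y)^{2}\leq\bigl(d(z,x)+d_{\tau}(x,y)\bigr)^{2}\leq 2r^{2}+2d_{\tau}(x,y)^{2}= t/2+2d_{\tau}(x,y)^{2}.
\]
The $t/2$ part is absorbed into the overall constants, and the $2d_{\tau}(x,y)^{2}$ part, divided by $4(t-s)=2t$, yields the announced Gaussian tail $\exp\bigl(-d_{\tau}(x,y)^{2}/(2t)(\cdots)\bigr)$. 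I would take $s=t/2$ so that $\ln(t/s)=\ln 2$ reproduces the explicit $\ln 2$ visible in the stated inner factor and the prefactor $(t/s)^{D/2}=2^{D/2}$ inverts to the $2^{-D/2}$ in \eqref{low-hk-od}. Since Theorem \ref{P-Low-B} only provides content control at time $A(r)r^{2}\leq t/4$, I first transport it forward to time $t/2$ via Harnack at vanishing spatial distance (applied to $p(x,z,\cdot)$ for $z\in B(x,r)$), which costs only a factor of the form $(A(r)/2)^{D/2}\exp(-d\rho_{1}^{-}t/8)$. The second inequality in \eqref{low-hk-od} follows from the explicit bound on $A$ together with the trivial inclusion $B(x,\sqrt{t}/2)\subset B(x,\sqrt{t})$.

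The main obstacle is the book--keeping: the several exponential factors produced by Corollary \ref{Harnack kernel} --- the $(t/s)^{D/2}$ prefactor, the $\exp(d\rho_{1}^{-}(t-s)/4)$ term, and the inner expression $D/d+\tau^{2}\rho_{1}^{-}/\rho_{2}+\rho_{1}^{-}(t+s)/3+3\tau^{2}D\ln(t/s)/(2(t-s)\rho_{2}d)$ appearing in the Gaussian --- must each be tracked through the precise choices $r=\sqrt{t/2}$ (respectively $r=\sqrt{t}/2$), $s=A(r)r^{2}$ (respectively $s=t/2$ after the forward transport) and through the Young--inequality splitting. No conceptual innovation beyond the Harnack inequality and the heat--content bound is required.
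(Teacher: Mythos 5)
Your proposal assembles the right ingredients (the heat--content bound of Theorem~\ref{P-Low-B} and the Harnack inequality of Corollary~\ref{Harnack kernel}), but it combines them in a way that cannot deliver the constants the theorem asserts, and it misses the decisive structural idea the paper uses.

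For the on--diagonal bound \eqref{low-hk-d}, the paper does \emph{not} integrate a pointwise Harnack estimate over the ball. It squares the heat--content bound and applies Cauchy--Schwarz together with the Chapman--Kolmogorov identity $\int_{\M} p(x,y,s)^2\, d\mu(y)=p(x,x,2s)$, so that
\begin{equation*}
\tfrac14 \;\le\; P_{A(R)R^2}\bigl(\mathbf 1_{B(x,R)}\bigr)(x)^2 \;\le\; p\bigl(x,x,2A(R)R^2\bigr)\,\mu\bigl(B(x,R)\bigr),
\end{equation*}
and then invokes Harnack \emph{at zero spatial separation}: $p(x,x,2A(R)R^2)\le p(x,x,2R^2)\,A(R)^{-D/2}\exp(\tfrac12 d\rho_1^- R^2)$, in which the Gaussian factor is identically one. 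This gives $p(x,x,2R^2)\ge A(R)^{D/2}\exp(-\tfrac12 d\rho_1^- R^2)/\bigl(4\mu(B(x,R))\bigr)$, i.e.\ exactly \eqref{low-hk-d} with $t=2R^2$. In your scheme the Harnack inequality is applied to $p(x,y,s)$ for each $y\in B(x,r)$ with $d(x,y)$ up to $r=\sqrt{t/2}$; the Gaussian exponential then carries $\exp\bigl(\tfrac{d(x,y)^2}{4(t-s)}(\tfrac Dd+\tfrac{\rho_1^-}{3}(t+s))\bigr)$, which for $d(x,y)\sim r$ contains a term of order $\exp(c\,\rho_1^- t)$ and is therefore \emph{not} ``bounded by a pure constant'' as you claim. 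After inversion this spoils the claimed decay $\exp(-d\rho_1^- t/4)$ and also degrades the numerical prefactor, so $C_2=C_1/4$ cannot be recovered from this route.

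The off--diagonal estimate has the same defect in a different place. The paper obtains \eqref{low-hk-od} by one further Harnack step comparing $p(x,x,t/2)$ directly with $p(x,y,t)$ (same first argument, spatial separation exactly $d_\tau(x,y)$), then substitutes the already-proven on--diagonal bound at time $t/2$. Your route compares $p(x,z,\cdot)$ for $z\in B(x,r)$ against $p(x,y,t)$ and so needs the triangle/Young bound $d_\tau(z,y)^2\le 2r^2+2d_\tau(x,y)^2$; the factor $2$ doubles the Gaussian coefficient, giving $\exp\bigl(-\tfrac{d_\tau(x,y)^2}{t}(\cdots)\bigr)$ rather than the claimed $\exp\bigl(-\tfrac{d_\tau(x,y)^2}{2t}(\cdots)\bigr)$. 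So even setting aside bookkeeping, the argument as proposed proves a strictly weaker statement; the missing idea is the Cauchy--Schwarz/Chapman--Kolmogorov step, which eliminates all spatial averaging inside the ball before any Harnack factor is introduced.
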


\begin{proof}
 With the same notations as in Theorem \ref{P-Low-B}, for $R>0$, 
\begin{displaymath}
P_{A\left(R\right)R^{2}}\left(\textbf{1}_{B\left(x,R\right)}\right)\left(x\right)\geq \frac{1}{2}.
\end{displaymath}
Thus,
\begin{eqnarray*}
\frac{1}{4}&\leq & P_{A\left(R\right)R^{2}}\left(\textbf{1}_{B\left(x,R\right)}\right)\left(x\right)^{2}\\
           &=&\left( \int_\M p(x,y,A\left(R\right)R^{2}) \textbf{1}_{B\left(x,R\right)} d\mu(y) \right)^2\\
           &\leq & \int_\M p(x,y,A\left(R\right)R^{2})^2 d\mu(y) \int_\M  \textbf{1}_{B\left(x,R\right)} d\mu(y)\\
           &=& p(x,x,2A\left(R\right)R^{2}) \mu(B(x,R)).
\end{eqnarray*}
Now since $0<A(R)\leq 1$, Harnack inequality in Corollary \ref{Harnack kernel} gives
\begin{equation}
\label{HarnackA}
p\left(x,x,2A\left(R\right)R^{2}\right)\leq p\left(x,x, 2 R^{2}\right)\left(A\left(R\right)\right)^{-D/2}\exp\left(\frac{1}{2}d\rho_1^- R^{2}\right).
\end{equation}
Therefore, we proved
\begin{equation}
\label{low-pmu}
 p\left(x,x, 2 R^{2}\right) \geq \frac{A\left(R\right)^{D/2}\exp\left(- \frac{1}{2} d\rho_1^- R^{2}\right)}{4 \mu(B(x,R))}.
\end{equation}
The first point follows by setting $t= 2R^2$.

For the second point, we recall that Harnack inequality with the distance $d_\tau$ reads, for $t>0,s=\frac{t}{2}$,
$$
{p\left(x,x,\frac{t}{2}\right)}
 \leq  {p\left(x,y,t\right)} 2^{\frac{D}{2}} 
\exp\left(\frac{d\rho_1^- t}{8}\right)
\exp\left(\frac{d_\tau \left(y,x\right)^{2}}{2t}\left(\frac{D}{d} +\frac{\rho_1^-}{2}t + \frac{2 \tau^2}{t} \left( \frac{\rho_1^-}{\rho_2} +\frac{3D}{2\rho_2d} \ln \left(2 \right)\right)\right)\right).
$$
Using the first point the conclusion follows directly.
\end{proof}
\begin{remark}
For $\tau=0$, we thus obtain the following optimal lower bound for the heat kernel with respect to the subelliptic distance:
\[
p(x,y,t) \geq \frac{ 2^{-\frac{D}{2}} C_2} { \mu\left(B\left(x,\sqrt t \right)\right)} \frac{\exp\left( -\frac{d\rho_1^- t}{4}\right)}{\left(1+\frac{d\rho_1^- t}{4}\right)^\frac{D}{2}}
\exp\left(-\frac{d\left(y,x\right)^{2}}{2t}\left(\frac{D}{d} +\frac{\rho_1^-}{2}t\right)\right).
\]
\end{remark}

\begin{remark}
Observe that due to the symmetry of the heat kernel: $p(x,y,t)=p(y,x,t)$, we can replace $\mu(B(x,\sqrt{t}))$ by $\sqrt{ \mu(B(x,\sqrt{t}))\mu(B(y,\sqrt{t}))}$ in the lower bound estimates.
\end{remark} 

We are now in position to prove the doubling condition satisfied by the measure which generalizes the result of \cite{BBG} to the case where $\rho_1^- \neq 0$.
\begin{theorem}\label{th-doubling}
\label{NegB-G}
There exist constants $C_{3}>0$ and $C_{4}>0$ depending only on $d,\kappa$ and $\rho_{2}$ such that
for all $x\in\mathbb{M}$ and  all $R> 0$, we have
\begin{equation}\label{e:doubling}
\mu\left(B\left(x,2R\right)\right)\leq C_{3}\exp\left(C_{4}\rho_1^- R^{2} \right)\mu\left(B\left(x,R\right)\right)
\end{equation}
\end{theorem}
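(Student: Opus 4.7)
The plan is to combine the off-diagonal heat-kernel lower bound \eqref{low-hk-od} with the stochastic completeness of $P_t$ at the natural time scale $t=R^2$. The point is that when $\sqrt{t}=R$, the ball $B(x,R)$ is precisely what appears in the denominator of the lower bound, while every $y\in B(x,2R)$ has sub-Riemannian distance at most $2R$ from $x$, so the Gaussian factor is bounded below by a constant times $\exp(-C\rho_1^- R^2)$.

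First I would specialize \eqref{low-hk-od} to $\tau=0$, $t=R^2$, and $y\in B(x,2R)$. On this ball, $d(y,x)^2/(2t)\le 2$, so the Gaussian exponent $-\tfrac{d(y,x)^2}{2t}\bigl(\tfrac{D}{d}+\tfrac{\rho_1^-}{2}t\bigr)$ is bounded below by $-2D/d-\rho_1^- R^2$. The polynomial prefactor $(1+d\rho_1^- R^2/4)^{D/2}$ is in turn dominated by $\exp(dD\rho_1^- R^2/8)$ via $(1+u)^{D/2}\le e^{Du/2}$. Collecting all the $R$-dependence into a single exponential yields a pointwise bound
\[
p(x,y,R^2)\ \ge\ \frac{c}{\mu(B(x,R))}\exp\bigl(-C_4\rho_1^- R^2\bigr),\qquad y\in B(x,2R),
\]
for constants $c,C_4>0$ depending only on $d,\kappa,\rho_2$ (through $D$).

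Second I would integrate this inequality over $B(x,2R)$ and invoke hypothesis (H.3), which gives $\int_{\bM}p(x,y,R^2)\,d\mu(y)=1$:
\[
1\ \ge\ \int_{B(x,2R)}p(x,y,R^2)\,d\mu(y)\ \ge\ \frac{c\,\mu(B(x,2R))}{\mu(B(x,R))}\exp\bigl(-C_4\rho_1^- R^2\bigr).
\]
Rearranging is exactly \eqref{e:doubling} with $C_3=1/c$.

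I do not anticipate a real obstacle here. All the substantive analytic work (the reverse logarithmic Sobolev inequality of Theorem \ref{Rev-Log}, the small-time asymptotic of Proposition \ref{S-T-As}, the uniform lower bound of Theorem \ref{t:AR}, and the resulting on- and off-diagonal heat-kernel estimates) has already been absorbed into Theorem \ref{th-low-hk}. The present theorem is then just a single integration of that lower bound against $\mathbf{1}_{B(x,2R)}$, together with the bookkeeping needed to gather the prefactor $\exp(-d\rho_1^- t/4)/(1+d\rho_1^- t/4)^{D/2}$ and the $\tfrac{\rho_1^-}{2}t$ contribution inside the Gaussian into a single $\exp(C_4\rho_1^- R^2)$, which is immediate since $\rho_1^- R^2\ge 0$.
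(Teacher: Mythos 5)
Your proof is correct but takes a genuinely different route from the paper's. The paper combines Proposition \ref{RhoNeg1} (an upper bound on $\mu(B(x,2R))$ in terms of the on-diagonal heat kernel $p(x,x,2R^2)$, whose proof is deferred to \cite{IM}) with the on-diagonal lower bound \eqref{low-pmu}, thereby controlling the ratio $\mu(B(x,2R))/\mu(B(x,R))$ entirely through on-diagonal heat-kernel quantities. You instead feed the off-diagonal lower bound \eqref{low-hk-od}, specialized to $\tau=0$ and $t=R^2$, into the identity $\int_{\M} p(x,\cdot,R^2)\,d\mu=1$ supplied by stochastic completeness (H.3), and then discard the contribution outside $B(x,2R)$. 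This is in effect a one-step re-derivation of exactly the piece of Proposition \ref{RhoNeg1} that the theorem needs, with the minor advantage of not relying on the volume-growth estimate imported from \cite{IM}. Your bookkeeping is sound: on $B(x,2R)$ with $t=R^2$ one has $d(y,x)^2/(2t)\le 2$, so the Gaussian exponent is bounded below by $-2D/d-\rho_1^- R^2$; the prefactor $(1+d\rho_1^- R^2/4)^{D/2}$ is absorbed into the exponential via $(1+u)^{D/2}\le e^{Du/2}$; and the resulting constants $C_3,C_4$ depend only on $d,\kappa,\rho_2$ (through $D$ and $C_2$), as required.
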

\begin{proof}
Due to Proposition \ref{RhoNeg1}, we have for $R> 0$

%

\[
\mu\left(B\left(x,2R\right)\right)\leq C\left(d,\kappa,\rho_{2}\right) 2^\frac{D}{2} \frac{\exp\left(12 d\rho_1^- R^{2}\right)}{p\left(x,x,2R^{2}\right)}.
\]
Combining this inequality with the lower bound for the heat kernel (\ref{low-pmu}) gives the desired result:
\[
\mu(B(x,2R)) \leq C\left(d,\kappa,\rho_{2}\right) 2^{\frac{D}{2}+2} \exp \left( \frac{25}{2} d\rho_1^- R^{2}\right)  A\left(R\right)^{-D/2} \mu(B(x,R)).
\]

\end{proof}


\section{Distance comparison theorem}

In this section we prove a global version of  the celebrated Nagel-Stein-Wainger estimate, Theorem \ref{T:comparison} . We need the following optimal upper bound for the heat kernel $p\left(x,y,t\right)$. 
\begin{theorem}
\label{Upper-Bd}
For all $\ve>0$, there exist some constants $C_5,C_6>0$, depending only on $d,\rho_2,\kappa$ and $\ve>0$, such that for $t>0$ and $x,y \in \mathbb{M}$, one has
\begin{equation}\label{up-hk}
p(x,y,t) \leq \frac{C_5(\ve)}{\mu(B(x,\sqrt t))^{1/2} \mu(B(y,\sqrt t))^{1/2}} \exp(C_6(\ve) \rho_1^- t) \exp\left(-\frac{d^2(x,y)}{(4+\ve)t}\right).
\end{equation}
\end{theorem}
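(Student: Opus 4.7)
The plan is to apply the classical Davies method for sharp Gaussian upper bounds in Dirichlet spaces, adapted to our sub-Riemannian setting. The proof will have three stages: an on-diagonal bound via Harnack, a Davies-type weighted $L^2$ contractivity, and their combination to yield pointwise Gaussian decay.

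\textbf{Step 1 (on-diagonal upper bound).} I would first derive
\[
p(x, x, t) \leq \frac{K_1 \exp(K_2 \rho_1^- t)}{\mu(B(x, \sqrt{t}))},
\]
with $K_1, K_2$ depending only on $d, \rho_2, \kappa$. This follows from Corollary \ref{Harnack kernel} with $\tau = 0$: applied between $(x, x, t)$ and $(x, z, 2t)$ it yields $p(x, x, t) \leq K \exp(K' \rho_1^- t)\, p(x, z, 2t)$ for every $z \in B(x, \sqrt{t})$; integrating over $z \in B(x, \sqrt{t})$ and invoking stochastic completeness $\int p(x, z, 2t)\, d\mu(z) = 1$ from (H.3) produces the bound.

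\textbf{Step 2 (Davies weighted $L^2$ contractivity).} For any smooth bounded Lipschitz $\phi$ on $\M$ with $\|\Gamma(\phi)\|_\infty \leq 1$, any $\alpha \in \mathbb R$, and any $\lambda \in (0, 1)$, I would prove
\[
\int_{\M} e^{2\alpha\phi} (P_t f)^2 \, d\mu \leq e^{2\alpha^2 t/\lambda} \int_{\M} e^{2\alpha\phi} f^2 \, d\mu, \quad f \in C_0^\infty(\M).
\]
Setting $J(s) = \int e^{2\alpha\phi} (P_s f)^2 \, d\mu$ and differentiating, integration by parts (legitimate via (H.1) and (H.3)) gives $J'(s) = -2\int e^{2\alpha\phi}\Gamma(P_s f) \, d\mu - 4\alpha \int e^{2\alpha\phi} P_s f \, \Gamma(\phi, P_s f) \, d\mu$; bounding the cross term via $|\Gamma(\phi, P_s f)| \leq \sqrt{\Gamma(P_s f)}$ (from $\Gamma(\phi) \leq 1$) and Young's inequality with parameter $\lambda$, then absorbing the residual $\Gamma(P_s f)$ contribution into the non-positive first term, yields $J'(s) \leq (2\alpha^2/\lambda) J(s)$; integration completes the proof.

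\textbf{Step 3 (pointwise Gaussian bound) and main obstacle.} Taking $\phi(z) = d(x, z)$ (1-Lipschitz with respect to $d$ and hence $\Gamma(\phi) \leq 1$ via \eqref{di}; the required smoothness and boundedness are achieved by truncation $\phi_R = \min(\phi, R)$ and smooth mollification against the exhaustion $h_k$ of (H.1)), the semigroup identity $p(x, y, t) = \int p(x, z, t/2) p(z, y, t/2) \, d\mu(z)$ combined with the triangle inequality $d(x, y) \leq d(x, z) + d(z, y)$ and Cauchy-Schwarz gives $e^{\alpha d(x, y)} p(x, y, t) \leq \|e^{\alpha d(x, \cdot)} p(x, \cdot, t/2)\|_2 \cdot \|e^{\alpha d(y, \cdot)} p(y, \cdot, t/2)\|_2$. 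Iterating the Davies contractivity of Step 2 backward in time and using the on-diagonal bound of Step 1 together with volume doubling (Theorem \ref{th-doubling}) controls each factor by $C e^{\alpha^2 t/(2\lambda)} \sqrt{p(x, x, t)}$ (resp.\ $y$); combining yields
\[
p(x, y, t) \leq \frac{K_3 \exp(K_4 \rho_1^- t)}{\sqrt{\mu(B(x, \sqrt{t})) \, \mu(B(y, \sqrt{t}))}} \exp\!\left(\frac{\alpha^2 t}{\lambda} - \alpha \, d(x, y)\right).
\]
Optimizing $\alpha = \lambda d(x, y)/(2t)$ produces exponent $-\lambda d(x, y)^2/(4t)$, and choosing $\lambda = 4/(4 + \varepsilon)$ delivers the claimed Gaussian rate $\exp(-d(x,y)^2/((4+\varepsilon)t))$. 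The principal obstacle is precisely this sharp-constant extraction: forcing $\lambda \uparrow 1$ inflates the prefactor $C_5(\varepsilon) \to \infty$ as $\varepsilon \downarrow 0$. The technical core is the iteration/stopping-time argument controlling $\|e^{\alpha d(x, \cdot)} p(x, \cdot, t/2)\|_2$, where one balances the accumulated Davies growth factor against the initial-time blow-up of $\|e^{\alpha d(x, \cdot)} p(x, \cdot, s_0)\|_2$ as $s_0 \to 0$, using volume doubling to convert between on-diagonal bounds at different scales.
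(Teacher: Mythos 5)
Your route is genuinely different from the paper's. Steps~1 and~2 are correct and correctly justified: the on-diagonal bound does follow from Corollary~\ref{Harnack kernel} with $\tau=0$ together with $P_{2t}\mathbf{1}=1$, and the differential inequality $J'(s)\le (2\alpha^2/\lambda)J(s)$ is exactly the Davies weighted-$L^2$ contractivity, with the integration by parts and the Young/absorption argument done properly. The paper instead follows the Cao--Yau scheme (also used in \cite{BG1}): two successive applications of the Harnack inequality, followed by the Grigor'yan integrated maximum principle with the \emph{time-dependent} weight $g(s,z)=-d(x,z)^2/(2(T-s))$, and a bootstrap in which the same quantity $P_{(1+\alpha)t}(F_t)(x)$ appears on both sides and is then cancelled. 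That bootstrap is exactly what lets the paper avoid any delicate limit $s\to 0^+$.

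The gap in your argument is in Step~3, and it is bigger than you acknowledge. The assertion that ``iterating the Davies contractivity backward in time controls each factor by $Ce^{\alpha^2 t/(2\lambda)}\sqrt{p(x,x,t)}$'' does not close as stated. Davies' inequality gives $\int e^{2\alpha\phi}p(x,\cdot,t/2)^2\,d\mu \le e^{\alpha^2(t/2-s)/\lambda}\int e^{2\alpha\phi}p(x,\cdot,s)^2\,d\mu$ for $0<s<t/2$, but the right-hand side diverges as $s\to 0^+$ (the weighted $L^2$ norm of $p(x,\cdot,s)$ blows up like a negative power of $s$), so you cannot simply terminate the iteration at $s=0$. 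Replacing $s$ by dyadic times $t/4, t/8,\dots$ merely reproduces the same quantity at smaller scales; volume doubling controls $\mu(B(x,\sqrt{s}))$ across scales but does not by itself bound $\int e^{2\alpha d(x,z)}p(x,z,s)^2\,d\mu(z)$ in terms of $p(x,x,t)$. To finish along the Davies route one needs an additional ingredient, either (a) a splitting $\{d(x,\cdot)<R\}\cup\{d(x,\cdot)\ge R\}$ plus an a priori Gaussian tail estimate (which is circular without more work), or (b) precisely the Grigor'yan integrated maximum principle with the time-dependent weight $e^{-d(x,\cdot)^2/(2(T-s))}$ --- that is, the device the paper uses --- to show that $\int e^{d(x,z)^2/(Dt)}p(x,z,t)^2\,d\mu(z)$ stays comparable to $p(x,x,ct)$ as $t\to 0^+$. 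In other words, the $\alpha$-linear Davies weight alone does not tame the short-time singularity; the quadratic, time-dependent weight does. I would recommend either inserting that lemma explicitly into Step~3 or, more in keeping with the paper, replacing Step~3 by the Cao--Yau bootstrap, which produces a self-referential inequality for $P_{(1+\alpha)t}(F_t)(x)$ and thereby eliminates the small-time limit entirely.

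Two smaller points. First, when you invoke Theorem~\ref{th-doubling} to change scales, remember the doubling constant there is $C_3\exp(C_4\rho_1^- R^2)$, so each scale change contributes an extra $\exp(c\,\rho_1^- t)$; this is harmless for the statement (it is absorbed into $C_6(\varepsilon)$), but it must be tracked. Second, the test functions $\phi$ in Step~2 must be bounded for $J(s)$ to be finite a priori; the truncation $\phi_R=\min(d(x,\cdot),R)$ you propose is standard, and the justification via (H.1) and (H.3) is appropriate, but one then needs $R\to\infty$ at the end with uniform control, which again runs into the same short-time issue unless the integrated maximum principle (or the bootstrap) is in place.
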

\begin{proof}
This proof follows the lines in Cao-Yau \cite{CY} and Baudoin-Garofalo \cite{BG1}. Let $\alpha>0$, then by the Harnack inequality in Corollary \ref{Harnack kernel} with $\tau=0$,
\begin{eqnarray*}
p(x,y,t)^2 &\leq& \frac{\left(1+\alpha\right)^{D}\exp\left(\frac{D}{2\alpha d}\right)}{\mu(B(y,\sqrt t))}\exp\left(\rho_1^- t \left(\frac{2+\alpha}{6\alpha}+\frac{d\alpha}{2}
\right)\right)
\int_{B(y,\sqrt t)} p (x,z,(1+\alpha)t)^2 d\mu(z) \\
           &=& \frac{\left(1+\alpha\right)^{D}\exp\left(\frac{D}{2\alpha d}\right)}{\mu(B(y,\sqrt t))}\exp\left(\rho_1^- t\left(\frac{2+\alpha}{6\alpha}+\frac{d\alpha}{2}
\right)\right) 
            P_{(1+\alpha)t} \left(p(x,\cdot,(1+\alpha)t) \1_{B(y,\sqrt t)} \right)(x). 
\end{eqnarray*}
\noindent Applying the Harnack inequality in Theorem \ref{T:harnack} once again, we have
\begin{eqnarray*}
& & P_{(1+\alpha)t} \left(p_{(1+\alpha)t} (x,\cdot) \1_{B(y,\sqrt t)} \right)(x)^2 = P_{(1+\alpha)t}(F_t)(x)^2\\
&\leq &  \frac{\left(1+\alpha\right)^{D}\exp\left(\frac{D}{2\alpha (\alpha+1)d}\right)}{\mu(B(x,\sqrt t))}\exp\left(\rho_1^- t \left(\frac{2+\alpha}{6\alpha}+\frac{d\alpha (\alpha+1)}{2}
\right)\right)\int_{B(x,\sqrt t)} P_{(1+\alpha)^2t} (F_t)(z)^2 d\mu(z)\\
\end{eqnarray*}
with $F_t(.)=p (x,\cdot,(1+\alpha)t) \1_{B(y,\sqrt t)}(\cdot)$.\\

By using now an argument of  the  proof of Theorem 8.1 in \cite{BG1} and the fact that  for  $z \in B(y,\sqrt t)$,
$$
d^2(x,z)\geq \frac{d^2(x,y)}{1+\alpha} -\frac{t}{\alpha},
$$
 we have for $0<(1+\alpha)^2 t<T$,
\begin{eqnarray*}
& &\int_{B(x,\sqrt t)} P_{(1+\alpha)^2t} \left(F_t \right)(z)^2 d\mu(z)\\
&\leq &  \exp\left(\frac{ t}{2(T-(1+\alpha)^2t)}\right) \int_\mathbb{M} e^{g((1+\alpha)^2t,z)} P_{(1+\alpha)^2t} \left(F_t \right)(z)^2 d\mu(z)\\
&\leq & \exp\left(\frac{ t}{2(T-(1+\alpha)^2t)}\right) \int_\mathbb{M} e^{g(0,z)} \left(F_t \right)(z)^2 d\mu(z)\\
&=& \exp\left(\frac{ t}{2(T-(1+\alpha)^2t)}\right) \int_{B(y,\sqrt t)} \exp\left(-\frac{ d^2(x,z)}{2T}\right) p_{(1+\alpha)t} (x,z)^2 d\mu(z)\\
&\leq&  \exp\left(\frac{ t}{2(T-(1+\alpha)^2t)}+ \frac{t}{2\alpha T} - \frac{ d^2(x,y)}{2(1+\alpha)T} \right)\int_{B(y,\sqrt t)}  p_{(1+\alpha)t} (x,z)^2  d\mu(z) \\
&=& \exp\left(\frac{ t}{2(T-(1+\alpha)^2t)}+ \frac{t}{2\alpha T} - \frac{ d^2(x,y)}{2(1+\alpha)T}\right) P_{(1+\alpha)t} (F_t)(x)
\end{eqnarray*}

where for $0\leq s<T$ and $z\in \M$, the function $g$ is defined by
\[
 g(s, z)= - \frac{d^2(x,z)}{2(T-s)}.
\]
 
\noindent Finally,
\begin{eqnarray*}
p(x,y,t)&\leq &  \frac{\left(1+\alpha\right)^{D}\exp\left(\frac{D(\alpha+2)}{4\alpha (\alpha+1)d}\right)}{\mu(B(x,\sqrt t))^{1/2}\mu(B(y,\sqrt t))^{1/2}}\exp\left(\rho_1^- \left(2+\alpha\right)t\left(\frac{1}{6\alpha}+\frac{d\alpha}{4}\right)\right) \\
 & & \exp\left(\frac{ t}{4(T-(1+\alpha)^2t)}+ \frac{t}{4\alpha T} - \frac{ d^2(x,y)}{4(1+\alpha)T}\right).
\end{eqnarray*}
\noindent Hence the result follows by choosing $T=(1+\alpha)^3t$.
\end{proof}
\begin{remark}
Note that $C_5(\ve)$ and $C_6(\ve)$ both tend to infinity when $\ve \to 0$.
\end{remark}

We are finally in position to prove the distance comparison theorem.

\begin{theorem}\label{th-distance} 
There exists a constant $C_7 >0$ which depends only on $d,\kappa$ and $\rho_2$ such that for all $x$ and $y$ in $\mathbb{M}$ and all $0<\tau \leq 1$,
\[
d\left(x,y\right)\leq C_7  \left(1+\sqrt {\rho_1^-} \right) \max \left\{  \sqrt{ d_{\tau}\left(x,y\right)}, d_\tau(x,y) \right\}.
\]
\end{theorem}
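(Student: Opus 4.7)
The strategy is to combine the sharp Gaussian upper bound of Theorem~\ref{Upper-Bd}, which involves the sub-Riemannian distance $d(x,y)$, against the off-diagonal Gaussian lower bound of Theorem~\ref{th-low-hk}, which features the interpolating distance $d_\tau(x,y)$. At a judiciously chosen time $t=t(d_\tau(x,y),\rho_1^-)$, the resulting inequality forces $d(x,y)$ to be controlled by $d_\tau(x,y)$ plus terms that can be absorbed.

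\textbf{Step 1 (Gaussian bracketing).} Inserting $\mathrm{lower}\le p(x,y,t)\le \mathrm{upper}$, taking logarithms and rearranging yields, for any $\varepsilon>0$,
\[
\frac{d(x,y)^{2}}{(4+\varepsilon)\,t}\le \frac{D}{2d}\frac{d_\tau(x,y)^{2}}{t}+\frac{\rho_1^-}{4}d_\tau(x,y)^{2}+\frac{c\,\tau^{2}d_\tau(x,y)^{2}}{t^{2}}+C\rho_1^- t+C+\frac{1}{2}\log\frac{\mu(B(x,\sqrt t))}{\mu(B(y,\sqrt t))},
\]
with constants depending only on $d,\kappa,\rho_{2}$ (log-polynomial terms in $\rho_1^- t$ absorbed into $C\rho_1^- t$).

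\textbf{Step 2 (volume ratio).} I would then use $B(x,\sqrt t)\subset B(y,\sqrt t+d(x,y))$ and iterate Theorem~\ref{th-doubling} roughly $\log_2(2+d(x,y)/\sqrt t)$ times to obtain
\[
\log\frac{\mu(B(x,\sqrt t))}{\mu(B(y,\sqrt t))}\le C\log\!\left(2+\frac{d(x,y)}{\sqrt t}\right)+C\rho_1^-\bigl(t+d(x,y)^{2}\bigr).
\]
Once Step~1 is multiplied through by $(4+\varepsilon)t$, the only term threatening the left-hand side is $C t\rho_1^- d(x,y)^{2}$, which is absorbable precisely when $\rho_1^- t$ is bounded by a small constant; the remaining logarithmic factor is absorbed via $\log(2+s)\le \eta s^{2}+C_{\eta}$ for small $\eta$.

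\textbf{Step 3 (choice of $t$ and conclusion).} Set $t=\min\{d_\tau(x,y),\,c/\rho_1^-\}$, interpreted as $t=d_\tau(x,y)$ when $\rho_1^-=0$, with $c$ small enough that both absorptions in Step~2 succeed. The hypothesis $0<\tau\le 1$ together with this choice keeps $\rho_1^- t$ uniformly bounded and ensures that every surviving term on the right is dominated by $C(1+\rho_1^-)\max\{d_\tau(x,y),d_\tau(x,y)^{2}\}$; the two regimes $d_\tau(x,y)\le 1$ and $d_\tau(x,y)\ge 1$ are then checked separately to confirm the scaling. Absorption produces
\[
d(x,y)^{2}\le C(1+\rho_1^-)\max\{d_\tau(x,y),\,d_\tau(x,y)^{2}\},
\]
and square roots give the claim with $C_{7}=\sqrt{2C}$.

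\textbf{Main obstacle.} The delicate point is the simultaneous bookkeeping: one must choose $t$ small enough that the term $t\rho_1^- d(x,y)^{2}$ arising from doubling can be moved to the left-hand side, yet large enough that the mixed error $\tau^{2}d_\tau(x,y)^{2}/t^{2}$ does not blow up; and the entire $\rho_1^-$-dependence must collapse into the single factor $(1+\sqrt{\rho_1^-})$ uniformly across the two regimes of small and large $d_\tau(x,y)$.
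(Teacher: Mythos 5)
Your overall strategy---bracket the heat kernel between the Gaussian lower bound in the distance $d_\tau$ (Theorem~\ref{th-low-hk}) and the Gaussian upper bound in the distance $d$ (Theorem~\ref{Upper-Bd}), then optimize over $t$---is the same as the paper's. Where you differ, and where you complicate things unnecessarily, is Step~2. The paper never needs a volume-ratio estimate. The remark following Theorem~\ref{th-low-hk} records that, by symmetry of the heat kernel, the denominator in the lower bound \eqref{low-hk-od} can be replaced by the geometric mean $\mu\big(B(x,\tfrac{\sqrt t}{2})\big)^{1/2}\mu\big(B(y,\tfrac{\sqrt t}{2})\big)^{1/2}$; this is trivially dominated by the denominator $\mu\big(B(x,\sqrt t)\big)^{1/2}\mu\big(B(y,\sqrt t)\big)^{1/2}$ of the upper bound \eqref{up-hk}, since the balls are nested. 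The volume factors therefore cancel immediately and favorably, with no doubling chain, no $\log\big(2+d(x,y)/\sqrt t\big)$ term, and no extra $\rho_1^- t\, d(x,y)^2$ term to absorb. What you build in Step~2 is correct in spirit (iterating Theorem~\ref{th-doubling} about $\log_2(2+d/\sqrt t)$ times does give the claimed estimate), and the subsequent absorption via the capped choice $t=\min\{d_\tau,\,c/\rho_1^-\}$ is workable, but it forces you into a self-referential bound involving $d(x,y)$ on the right and the attendant smallness constraint $\rho_1^- t\le c$. The paper, free of that constraint, can simply choose $t=\tau\, d_\tau(x,y)$ when $d_\tau\le 1$ and $t=\tau$ when $d_\tau\ge 1$, which also treats the mixed term $\tau^2 d_\tau^2/t$ more cleanly. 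One small inaccuracy in your Step~1: the prefactor of $\tau^2 d_\tau^2/t^2$ coming from the lower bound contains $\rho_1^-/\rho_2$, so it should be $c(1+\rho_1^-)$ rather than a bare constant $c$; this is absorbable in your scheme but should be tracked. In short: correct at the skeleton level, but the volume-ratio detour in Step~2 is a genuine inefficiency that the paper's symmetric lower bound avoids outright.
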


\begin{proof}

Using the symmetry of the heat kernel, combining the lower estimate (\ref{low-hk-od}) for the heat kernel with the distance $d_\tau$  and the upper estimate (\ref{up-hk}) for the sub-elliptic distance $d$  gives
\begin{align*}
 & \frac{\left(A\left( \frac{\sqrt t}{2} \right)\right)^\frac{D}{2}  2^{-\frac{D}{2}} \exp\left( -\frac{d\rho_1^- t}{4}\right) }{4 \mu\left(B\left(x,\frac{\sqrt t}{2}\right) \right)^\frac{1}{2} 
 \mu\left(B\left(y,\frac{\sqrt t}{2}\right) \right)^\frac{1}{2} } 
\exp\left(-\frac{d_\tau\left(x,y\right)^{2}}{2t}\left(\frac{D}{d} +\frac{\rho_1^-}{2} t  + \frac{2 \tau^2}{t} \left( \frac{\rho_1^-}{\rho_2} +\frac{3D}{2\rho_2d} \ln(2)\right)\right)\right)\\
\leq &  \frac{C_5(\ve)}{\mu(B(x,\sqrt t))^\frac{1}{2} \mu(B(y,\sqrt t))^\frac{1}{2}} \exp(C_6(\ve) \rho_1^- t) \exp\left(-\frac{d^2(x,y)}{(4+\ve)t}\right).\\
\end{align*}

Therefore we have
\begin{align*}
 & A\left( \frac{\sqrt t}{2} \right)^\frac{D}{2}  2^{-\frac{D}{2}-2}  \exp\left( -\frac{d\rho_1^- t}{4}\right)
\exp\left(-\frac{d_\tau\left(x,y\right)^{2}}{2t}\left(\frac{D}{d} +\frac{\rho_1^-}{2} t  + \frac{2\tau^2}{t} \left( \frac{\rho_1^-}{\rho_2} +\frac{3D}{2\rho_2d} \ln(2)\right)\right)\right)\\
\leq &  C_5(\ve) \exp(C_6(\ve) \rho_1^- t) \exp\left(-\frac{d^2(x,y)}{(4+\ve)t}\right).\\
\end{align*}
Thus for all $t>0$: 
\begin{align*}
0 \leq & -\frac{D}{2} \ln A\left( \frac{\sqrt t}{2} \right) + \left(\frac{D}{2}+2\right) \ln 2 + \ln C_5(\ve) + C_6(\ve) (1+\rho_1^-) t + \frac{d\rho_1^- t}{4} \\
 & -\frac{d^2(x,y)}{(4+\ve)t} + \frac{d_\tau\left(x,y\right)^{2}}{2t}\left(\frac{D}{d} + \frac{\rho_1^-}{2} t   +\frac{2\tau^2}{t} \left( \frac{\rho_1^-}{\rho_2} +\frac{3D}{2\rho_2d} \ln(2)\right)\right).
\end{align*}

Since $-\ln A\left( \frac{\sqrt t}{2} \right) \leq \ln \left(1+ \frac{d \rho_1^-  t}{4}\right) -\ln C_1 \leq \frac{d \rho_1^-  t}{4} -\ln C_1$,  fixing $\ve=1$, 
there exist some constants $E_1, E_2$ which only depend on $d,\kappa$ and $\rho_2$ such that for all $t>0$,
we have for all $x,y\in \M, t>0$ and $\tau>0$ 
$$
0\leq E_1  +  E_2  \rho_1^- t - \frac{d^2(x,y)}{(4+\ve)t} + \frac{d_\tau\left(x,y\right)^{2}}{2t}\left(\frac{D}{d} +\frac{\rho_1^-}{2}t + \frac{2\tau^2}{t}\left( \frac{\rho_1^-}{\rho_2} +\frac{3D}{2\rho_2d} \ln(2)\right)\right).
$$
Therefore, for some positive constants $A_i, 1\leq i \leq 3$ which only depend on $d,\kappa$ and $\rho_2$,
\[
d(x,y)^2 \leq A_1 ( 1+ \rho_1^- \, t)\, t + A_2 ( 1+ \rho_1^- \, t)\, d_\tau(x,y)^2 +A_3 (1+ \rho_1^-)\,   \frac{\tau^2 d_\tau(x,y)^2}{t}. 
\]
Since $\tau\leq 1$, if $d_\tau(x,y) \leq 1$, choosing $t= \tau d_\tau(x,y) \leq 1$ yields
\[
d\left(x,y\right)^2\leq  (1+ \rho_1^-) \left((A_1+A_3)\tau d_\tau(x,y)  +A_2 d_{\tau}\left(x,y\right)^2\right) \leq  (A_1+A_2+A_3)  (1+ \rho_1^-) d_\tau (x,y).
\]
If  $d_\tau(x,y) \geq 1$, choosing $t=\tau \leq 1 $, we infer
$$
d(x,y)^2 \leq  (1+\rho_1^-) \left(A_1 \tau+ A_2  d_\tau(x,y)^2  + A_3 \tau d_\tau (x,y)^2\right) \leq (A_1+A_2+A_3)   (1+ \rho_1^-) d_\tau (x,y)^2.
$$ 
\end{proof}

\section{Subelliptic estimates}
In this section, we investigate some consequences of the curvature-dimension inequality. 
We show that if an  operator $L$ is an H\"ormander type operator and if it satisfies some curvature-dimension inequality CD$(\rho_{1},\rho_{2},\kappa,d)$, then it is necessarly a rank 2 operator.
The proof is based on the distance comparison theorem (Theorem \ref{th-distance}). Actually, only a local distance comparison is needed. 
The notions of H\"ormander type operator and rank 2 operator are explained below. 

\

First, the comparison  principle of Fefferman and Phong between sub-elliptic and elliptic  balls (see \cite{FP}) implies the following local sub-elliptic estimate: 
\begin{theorem}\label{th-sub-ell-est}
Assume the operator $L$ satisfies the condition CD$(\rho_{1},\rho_{2},\kappa,d)$
for some  $\rho_{1}\in\mathbb{R}, \rho_{2}>0, \kappa\geq 0,$ and $d\geq 2$. Assume moreover that the metric associated to $d_\tau$ is a Riemannian metric $g_\tau$ on $\M$ for some $\tau$. 
Let $\Omega$ be a bounded domain  in $\M$ and a  chart $\phi:U\subset \mathbb R^m \to \Omega$. Then there exist some  constants $c=c(\Omega, \phi)>0$ and $C=C(\Omega, \phi)>0$ such that
\begin{equation}\label{sub-ell-est}
\Vert Lu \Vert + C\Vert u \Vert \geq c \Vert u \Vert_{(1)},  u\in \mathcal C_0^\infty (\phi(U)),
\end{equation}
where $\Vert \cdot \Vert$ denotes the usual $L^2(\mu)$ norm in $\Omega$ and where $\Vert \cdot \Vert_{(s)}$ is the standard Sobolev norm.
\end{theorem}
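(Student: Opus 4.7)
The plan is to derive the estimate by coupling the global distance comparison Theorem \ref{th-distance} with the classical Fefferman--Phong comparison between sub-elliptic and Euclidean balls, worked in the given chart.

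First, I would apply Theorem \ref{th-distance} at the value of $\tau$ for which $d_\tau$ induces the Riemannian metric $g_\tau$ on $\M$. For $0<\tau\le 1$ this gives, for every $x,y\in\M$,
\[
d(x,y)\;\le\; C_7\bigl(1+\sqrt{\rho_1^-}\bigr)\max\bigl\{\sqrt{d_\tau(x,y)},\,d_\tau(x,y)\bigr\}.
\]
Since $d_\tau$ is the Riemannian distance associated with a smooth metric $g_\tau$, on the compact closure of the bounded set $\Omega$ the metric $g_\tau$ is uniformly equivalent, through the chart $\phi:U\subset\R^m\to\Omega$, to the Euclidean metric on $U$. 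Consequently there is a constant $K=K(\Omega,\phi,g_\tau)$ with
\[
d_\tau(\phi(\xi),\phi(\eta))\le K\,|\xi-\eta|,\qquad \xi,\eta\in \phi^{-1}(\overline\Omega).
\]
Combining this with the previous inequality and using that $|\xi-\eta|$ is bounded on $\phi^{-1}(\overline\Omega)$ (so the $\sqrt{d_\tau}$-branch of the $\max$ dominates the $d_\tau$-branch up to a multiplicative constant), I obtain constants $\tilde C=\tilde C(\Omega,\phi,\rho_1,\rho_2,\kappa,d,\tau)$ such that
\[
d(\phi(\xi),\phi(\eta))\;\le\;\tilde C\,|\xi-\eta|^{1/2}\qquad \text{for all }\xi,\eta\in \phi^{-1}(\overline\Omega).
\]

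This pointwise Hölder bound between the intrinsic sub-Riemannian metric of $L$ and the Euclidean metric of the chart is exactly the hypothesis of the Fefferman--Phong comparison principle in \cite{FP}. That principle, coupled with the standing assumption that $L$ is locally subelliptic in the Fefferman--Phong sense, promotes a bound of the form $d(x,y)\le C|x-y|^{\epsilon}$ in a chart to the coercivity inequality
\[
\|Lu\|+C\|u\|\;\ge\;c\,\|u\|_{(\epsilon)},\qquad u\in \mathcal{C}_0^\infty(\phi(U)),
\]
where $\|\cdot\|_{(\epsilon)}$ is the Euclidean Sobolev norm of order $\epsilon$. Applying this with $\epsilon=1/2$ yields the claimed estimate \eqref{sub-ell-est}.

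The main conceptual obstacle is purely one of bookkeeping: one must verify that the hypotheses of the Fefferman--Phong machinery fit the abstract Dirichlet-space/carr\'e du champ setup of the paper. The subellipticity hypothesis is built into the standing assumptions on $L$, and the smoothness of $\mu$ together with the existence of the chart $\phi$ give the Euclidean reference structure, so the application is direct. The only quantitative care needed is tracking that the constants in the Fefferman--Phong estimate depend only on $\Omega$, $\phi$, and the structural parameters $\rho_1,\rho_2,\kappa,d$ through $C_7$ and the equivalence constant $K$ between $g_\tau$ and the Euclidean metric.
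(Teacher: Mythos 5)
Your proof matches the paper's argument exactly in outline: both begin by applying Theorem \ref{th-distance} at the given $\tau$ to get $d\le A\sqrt{d_\tau}$ on $\Omega$, then use compactness of $\overline\Omega$ to replace $d_\tau$ by the Euclidean distance in the chart $\phi$, and finally invoke Theorem 1 of Fefferman--Phong \cite{FP}. The one slip is in the exponent of your stated Fefferman--Phong conclusion: the ball comparison $d(x,y)\le C|x-y|^\epsilon$ means $L$ is subelliptic of order $\epsilon$, and this yields the $L^2$ a priori estimate $\Vert Lu\Vert + C\Vert u\Vert \ge c\Vert u\Vert_{(2\epsilon)}$ rather than $c\Vert u\Vert_{(\epsilon)}$ (test against the elliptic case $\epsilon=1$, which must give $H^2$, not $H^1$). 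With $\epsilon=1/2$ this gives precisely the $H^1$ norm appearing in \eqref{sub-ell-est}, so once the exponent is corrected your argument closes and coincides with the paper's proof.
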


\begin{proof} By Theorem \ref{th-distance}, there exists a constant $A$ such that $d(x,y)\leq A \sqrt {d_\tau(x,y)}$ for all $x,y \in \Omega$. Therefore,  $B_\tau(x,R) \subset B(x,A \sqrt R)$. 
Since $\bar \Omega$ is a compact set, the metric $g_\tau$ is comparable with $g_{eucl}$ the metric obtained from the Euclidean one in $U$ by the map $\phi$. 
If we pull back the result in $U$, we thus have, $B_{eucl}(x,R) \subset B(x,A' \sqrt R)$ for some constant $A'>0$.
The result then follows from Theorem 1 in \cite{FP}.
\end{proof}

We call an H\"ormander type operator an operator $L$ which satisfies the general assumptions of Section 2 and which can be written locally as  $L= \sum_{i=1}^r X_j^* X_j$ for some $\mathcal C^\infty $ vector fields $X_j$.
 
We say it is an operator of rank $k$ if the vector fields and their commutators up to order $k$: 
$$
X_1,\dots ,X_r,  [X_{i_1},X_{i_2}], \dots, [X_{i_1},[X_{i_2},[ \dots,X_{i_k}]] \dots ], i_j=1 \dots r 
$$  
 span the tangent space in each point of $\M$. 

\

The following theorem is a direct consequence of an important result of Rothschild and Stein \cite{RS} and of the local subelliptic estimate (\ref{sub-ell-est}) (see Theorem 2.1 in \cite{JSC} for more details). 

\begin{theorem}
In addition of the hypothesis of Theorem \ref{th-sub-ell-est}, assume that the operator L is an H\"ormander type operator. Then $L$ is a rank 2 operator. 
\end{theorem}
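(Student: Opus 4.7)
The plan is to read off the rank directly from the local subelliptic estimate \eqref{sub-ell-est} by invoking the converse direction of H\"ormander's/Rothschild--Stein's characterization of subelliptic operators associated to a system of vector fields. The estimate \eqref{sub-ell-est} is a gain of one full Sobolev derivative for the second-order operator $L$, which in the standard terminology is subellipticity of order $1/2$. This fractional index is precisely the invariant that detects the bracket-generating step, so once it is in hand the result is essentially immediate.

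More concretely, I would first localize: fix a chart $\phi\colon U\subset\mathbb R^m\to\Omega$ as in Theorem \ref{th-sub-ell-est} and transport the representation $L=\sum_{j=1}^r X_j^*X_j$ to $U$, obtaining smooth vector fields $\widetilde X_j$ on $U$. Estimate \eqref{sub-ell-est}, applied to $u\in C_0^\infty(\phi(U))$, then reads, after a bounded pull--back,
\[
\|Lu\|_{L^2} + C\|u\|_{L^2} \;\ge\; c\,\|u\|_{H^1(U)},
\]
which is exactly an a priori subelliptic estimate with loss of one derivative. This is the hypothesis of (the converse half of) the Rothschild--Stein theorem, cf.\ Theorem 2.1 in \cite{JSC}.

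Next I would invoke this characterization: if $L=\sum \widetilde X_j^*\widetilde X_j$ is a H\"ormander type operator and if $L$ satisfies a subelliptic estimate
\[
\|Lu\|_{L^2} + C\|u\|_{L^2} \;\ge\; c\,\|u\|_{H^{2/k}(U)}
\]
for some integer $k\ge 1$, then the vector fields $\widetilde X_1,\dots,\widetilde X_r$ together with their iterated commutators of length at most $k$ span the tangent space at every point. Applying this statement with $k=2$ (which is what our estimate furnishes) yields that $\widetilde X_1,\dots,\widetilde X_r,[\widetilde X_{i_1},\widetilde X_{i_2}]$ already span $T_pU$ for every $p\in U$. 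Since the chart was arbitrary this gives the rank 2 condition globally on $\M$.

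The only subtle point is making sure we really have subellipticity of order exactly $1/2$ rather than something weaker; but this is precisely what Theorem \ref{th-sub-ell-est} provides through the distance comparison $d(x,y)\le A\sqrt{d_R(x,y)}$ of Theorem \ref{th-distance} combined with Fefferman--Phong. Thus the main obstacle is not the proof itself but the bookkeeping needed to verify that the hypotheses of the Rothschild--Stein converse apply in our abstract Dirichlet-space setting once a local chart and a H\"ormander representation $L=\sum X_j^*X_j$ are fixed; everything then reduces to a classical local statement.
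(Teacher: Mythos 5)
Your proposal is correct and follows essentially the same route as the paper: the paper's proof is the single sentence that the result "is a direct consequence of an important result of Rothschild and Stein \cite{RS} and of the local subelliptic estimate (\ref{sub-ell-est}) (see Theorem 2.1 in \cite{JSC})." You have simply unpacked that sentence — localizing in a chart, identifying the estimate \eqref{sub-ell-est} as subellipticity of order $1/2$, and applying the converse (sharpness) direction of the Rothschild--Stein theorem with $k=2$ — which is exactly the intended argument.
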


\section{Gromov's precompactness theorem}

The goal of this section is to establish a generalisation Gromov's precompactness theorem for our class of subriemannian manifolds.
Initially, the Gromov's precompactness theorem states that the space of Riemannian manifolds with Ricci curvature bounded below by $k$, dimension bounded  by $N$ and diameter less  $D$ is precompact for the Gromov-Hausdorff convergence. Moreover the result can be extended for the (pointed) measured Gromov-Hausdorff convergence by endowing the Riemannian manifolds with their Riemannian volume. We refer to the book of Villani \cite{V}, chapter 27 for the statement of the result and the careful definitions of the above convergences.     

In our generality, contrary to the Riemannian case,  the measure $\mu$ is only defined up to a positive constant. Here, we need to normalize the measure. 

Let $\M$ be a compact smooth manifold and $\mu$ be a smooth measure on $\M$ such that there exists a smooth second order sub-elliptic differential operator $L$ which satisfies the general assumptions described in section 2. Let  us assume that the measure satisfies the normalisation property $\mu(\M)=1$. We say the compact  metric measured space $\M=(\M,\mu)$ belongs to  $ \mathcal M_R (\rho_{1},\rho_{2},\kappa,d) $, $R>0$ if moreover $L$ satisfies CD$(\rho_{1},\rho_{2},\kappa,d)$ and the (sub-Riemannian) diameter of $\M$ is bounded above by $R$.

\begin{theorem}
 Let $\rho_{1}\in\mathbb{R}, \rho_{2}>0, \kappa\geq 0,$ and $d\geq 2$, $R>0$.
The set of metric measured spaces $\mathcal M_R (\rho_{1},\rho_{2},\kappa,d)$ is precompact for the  measured Gromov-Hausdorff convergence.
\end{theorem}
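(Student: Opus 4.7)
The plan is to reduce the statement to a uniform total boundedness condition and then invoke Gromov's abstract precompactness criterion for (measured) Gromov--Hausdorff convergence. The only nontrivial ingredient that we need from the preceding sections is the uniform local volume doubling property established in Theorem \ref{th-doubling}.

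First I would use the normalization $\mu(\M)=1$ combined with the volume doubling inequality \eqref{e:doubling} to obtain a uniform \emph{lower} bound for the volume of small balls. Indeed, fix $\M \in \mathcal{M}_R(\rho_1,\rho_2,\kappa,d)$ and $\varepsilon>0$. Given $x\in \M$, since the diameter is at most $R$ we have $B(x,R)=\M$, hence $\mu(B(x,R))=1$. Iterating \eqref{e:doubling} roughly $k=\lceil \log_2(2R/\varepsilon)\rceil$ times to pass from radius $\varepsilon/2$ up to radius at least $R$, and absorbing the exponential factors into a constant depending only on $\rho_1,\rho_2,\kappa,d,R$, one gets a constant $c=c(\rho_1,\rho_2,\kappa,d,R,\varepsilon)>0$ such that
\[
\mu(B(x,\varepsilon/2)) \ge c, \qquad \text{for every } x\in \M \text{ and every } \M\in \mathcal{M}_R(\rho_1,\rho_2,\kappa,d).
\]

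Next I would run the standard packing argument. Let $\{x_1,\dots,x_N\}$ be a maximal $\varepsilon$-separated subset of $\M$; then the balls $B(x_i,\varepsilon/2)$ are pairwise disjoint, while the balls $B(x_i,\varepsilon)$ cover $\M$. The disjointness together with $\mu(\M)=1$ gives $Nc\le 1$, so $N\le 1/c$. Thus every $\M \in \mathcal{M}_R(\rho_1,\rho_2,\kappa,d)$ can be covered by at most $N(\varepsilon)=\lfloor 1/c\rfloor$ balls of radius $\varepsilon$, and this covering number depends only on $\varepsilon$ and the structural parameters $\rho_1,\rho_2,\kappa,d,R$, not on $\M$. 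This is precisely the uniform total boundedness hypothesis of Gromov's precompactness theorem for compact metric spaces, hence $\mathcal{M}_R(\rho_1,\rho_2,\kappa,d)$ is precompact for Gromov--Hausdorff convergence.

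Finally I would upgrade from GH to measured GH convergence. All spaces in the family carry a probability measure and have diameter at most $R$; moreover the same packing bound controls the number of points needed to $\varepsilon$-approximate the measure in the Prokhorov/Wasserstein sense. Thus one can extract from any sequence $\M_n$ a subsequence that GH-converges to a compact metric space $\M_\infty$, and by a standard diagonal argument (push forward each $\mu_n$ through the approximating isometric embeddings into a common space and use Prokhorov's theorem on the resulting tight family of probability measures supported in a bounded set) one extracts a further subsequence along which $\mu_n$ converges weakly to a probability measure $\mu_\infty$ on $\M_\infty$. This yields measured GH convergence, proving the precompactness. The only step I expect to require some care is this last passage from GH to measured GH convergence, since one must ensure the limiting probability measure is actually carried by the limit space $\M_\infty$; but this is standard once the packing estimate above is in hand, see e.g. Chapter 27 of Villani \cite{V}.
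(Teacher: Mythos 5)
Your proof is correct and follows essentially the same route as the paper: both hinge on the uniform volume doubling property of Theorem \ref{th-doubling} together with the normalization $\mu(\M)=1$ and the diameter bound $R$. The only difference is that the paper simply cites Theorem~27.31 in Villani \cite{V} (whose proof is exactly the packing-plus-Prokhorov argument you wrote out), whereas you reconstruct that argument explicitly; the reconstruction is sound.
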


\begin{proof}
The proof is an easy consequence of Theorem 27.31 in \cite{V} and of the doubling property of Theorem \ref{th-doubling}. 
\end{proof}


\end{document}